\definecolor{shadecolor}{gray}{0.9}
\newcommand{\blue}[1]{\textcolor{MidnightBlue}{#1}}
\tikzstyle{startstop} = [rectangle, rounded corners, minimum width=2.5cm, minimum height=0.8cm, text centered, draw=black]
\tikzstyle{process} = [rectangle, minimum width=5cm, minimum height=0.8cm, text width=6cm, text centered, align=center, draw=black]
\tikzstyle{decision} = [diamond, aspect=3, minimum width=6cm, minimum height=2cm, text centered, draw=black]
\tikzstyle{arrow} = [thick,->,>=stealth]
\theoremstyle{plain}
\newtheorem{theorem}{Theorem}
\numberwithin{theorem}{section}
\newtheorem{proposition}[theorem]{Proposition}
\theoremstyle{definition}
\newtheorem{example}[theorem]{Example}
\newtheorem{remark}[theorem]{Remark}
\newtheorem{assumption}[theorem]{Assumption}
\newtheorem{lemma}[theorem]{Lemma}
\newcommand\Eb{\mathbb{E}}
\newcommand\Pb{\mathbb{P}}
\newcommand\Rb{\mathbb{R}}
\newcommand{\bE}{\mathbb{E}}
\newcommand{\bP}{\mathbb{P}}
\newcommand{\bR}{\mathbb{R}}
\newcommand\Kc{\mathscr{K}}
\newcommand\Lc{\mathscr{L}}
\newcommand{\cF}{\mathcal{F}}
\newcommand{\cS}{\mathcal{S}}
\newcommand{\cY}{\mathcal{Y}}
\newcommand\gammat{\widetilde{\gamma}}
\newcommand\rad{\tau}
\newcommand{\RR}{R}
\newcommand{\h}{{\bf h}}
\newcommand\dd{\text{d}}
\DeclareMathOperator*{\argmin}{arg\,min}
\DeclareMathOperator*{\esssup}{ess\,sup}
\begin{document}

\title{Numerical approximation of McKean-Vlasov SDEs via stochastic gradient descent\footnote{Full code available at \url{https://github.com/amatoandre/Numerical-approximation-of-MKV-SDEs-via-SGD}}}

\author{
\normalsize Ankush Agarwal\textit{$^{a}$} \\
        \small   aagarw93@uwo.ca
\and
\normalsize Andrea Amato\textit{$^{b}$} \\
        \small   andrea.amato9@unibo.it
        \and
\normalsize Stefano Pagliarani\textit{$^{b}$} \\
        \small   stefano.pagliarani9@unibo.it
\and
 \normalsize Gon\c calo dos Reis\textit{$^{c,d}$}\\
        \small  G.dosReis@ed.ac.uk
        }
\date{%
    \footnotesize 
		$^{a}$~ 
    Department of Statistical and Actuarial Sciences, University of Western Ontario, N6A 5B7 London, Canada
		\\
    $^{b}$~ Department of Mathematics, University of Bologna, Piazza di Porta san Donato, 5 Bologna
		\\
    $^{c}$~School of Mathematics, University of Edinburgh, Peter Guthrie Tait Road, Edinburgh, EH9 3FD, UK
    \\
    $^{d}$~ Centro de Matem\'atica e Aplica\c c\~{o}es (NOVA Math), FCT, UNL, 2829-516 Caparica, Portugal
    \\
    \today
}

\maketitle

\begin{abstract}
\noindent 
We propose a novel approach to numerically approximate McKean-Vlasov stochastic differential equations (MV-SDE) using stochastic gradient descent (SGD) while avoiding the use of interacting particle systems (IPS) {and the associated simulation costs required to achieve the ``propagation of chaos'' limit}. The SGD technique is deployed to solve a Euclidean minimization problem, obtained by first representing the MV-SDE as a minimization problem over the set of continuous functions of time, and then approximating the domain with a finite-dimensional subspace.
Convergence is established by proving certain intermediate stability and moment estimates of the relevant stochastic processes, including the tangent processes. Numerical experiments illustrate the competitive performance of our SGD based method compared to the IPS benchmarks. This work offers a theoretical foundation for using the SGD method in the context of numerical approximation of MV-SDEs, and provides analytical tools to study its stability and convergence.
\newline

\noindent 
\textbf{Keywords}: 
McKean-Vlasov equations, 
nonlinear stochastic differential equations, 
stochastic gradient descent (SGD), 
convergence, 
polynomial approximation.
\newline

\noindent \textbf{2020 Mathematics Subject Classification}: Primary: 
65C30, 
60Gxx; 
\ 
Secondary: 
65C05, 
65C35 
\newline

{\noindent\textbf{Acknowledgements}: The research of SP was partially supported by the PRIN22 project no. CUP\_E53C23001670001. The author SP has also received funding from the European Union’s Horizon Europe research and innovation programme under the Marie Skłodowska-Curie Actions Staff Exchanges (Grant Agreement No. 101183168, Call: HORIZON-MSCA-2023-SE-01).
GdR acknowledges support from the \emph{Funda{\c c}\~ao para a Ci\^{e}ncia e a Tecnologia} (Portuguese Foundation for Science and Technology) through the project UIDB/00297/2020 and UIDP/00297/2020 (Centro de Matem\'atica e Aplica\c c\~oes -- NOVA Math) and from the UK Research and Innovation (UKRI) under the UK government’s Horizon Europe funding Guarantee [Project UKRI343].
}
\newline

{\noindent\textbf{Disclaimer}: Funded by the European Union. Views and opinions expressed are however those of the author(s) only and do not necessarily reflect those of the European Union or the European Education and Culture Executive Agency (EACEA). Neither the European Union nor EACEA can be held responsible for them.}
\end{abstract}




\section{Introduction}

{In this work, we propose a novel technique for numerically approximating the solution of a class of McKean-Vlasov stochastic differential equations (MV-SDEs) using stochastic gradient descent (SGD). We cast the solution of the MV-SDEs as the fixed point of a certain functional for which we find the solution via minimizing a related objective function.  As the possible domain is infinite-dimensional, we look for an approximation to the true solution in a finite-dimensional domain via the method of SGD. We show that the approximation in the finite-dimensional domain approaches the true solution in the limit.

{ MV-SDEs play an important role in a variety of fields, including, but not limited to, kinetic theory of gases and plasmas \cite{Cattiaux2008PTRFP-Probabilisticapproach,bernou2024uniform}, chemotaxis modelling \cite{suzuki2005Book}, population dynamics including crowd \cite{faure2019Book} and pedestrian \cite{MR3735068} dynamics, machine learning (ML) \cite{carmona2021convergence,carmona2022convergence}, finance and optimal control \cite{CarmonaDelarue_book_I}, quantum mechanics \cite{Golse2016meanFieldAndQuantumMechanics}, and mean-field games \cite{CarmonaDelarue_book_I}.} Typically, the solution of an MV-SDE is approximated via an $N_0$-dimensional interacting particle system (IPS) and it is well-known, via the so-called ``propagation of chaos'' {results (e.g. \cite{Sznitman91, meleard1996asymptotic})}, that the empirical law of the interacting particle system converges to the law of the MV-SDE as $N_0 \to \infty$.  Numerically, one can then use time-discretization and simulation to obtain accurate approximate solutions of the MV-SDE via the IPS (\cite{bossytalay1997,anto:koha:02,tala:vail:03}). 
The applicability of this approach, has been recently extended across a variety of settings under varying regularity and growth assumptions on its coefficients, see for example, \cite{haji2018multilevel}, \cite{sun2017ito}, \cite{chen2021SSM},\cite{chen2022euler},  \cite{biswas2022explicit}, \cite{MR4860959}, \cite{MR4908784}, \cite{yuanping2024explicit}, \cite{soni2025tamed}. 

Even though the ``IPS'' approach is very powerful and can be applied to many different classes of MV-SDEs, depending on the type of interaction kernel, the computational cost of simulating an interacting particle system can be expensive. 
As such, there have been many other works in the literature to explore alternate techniques for numerically solving for the solution of an MV-SDE. In \cite{szpruch2017iterative}, an iterative multilevel Monte Carlo scheme of creating a particle system is combined with Picard iteration to reduce the computational cost. In \cite{GobetPagliarani2018AnalysiticaApproxMVSDE}, the technique of analytical expansions is used to derive accurate approximation of the density of the solution of an MV-SDE with elliptic constant diffusion coefficient and bounded drift.  
In another approach, \cite{Baladronetal2012neuronodel}, \cite{Goddard2022}, \cite{cazacu2025stochastic} use the Fokker-Planck equation representation of the density of the solution to apply numerical techniques used to solve partial differential equations (PDEs). In \cite{AgarwalPagliarani2021FourierBased}, the authors consider a class of MV-SDEs for which the solution can be readily computed using the Picard iteration. They use a Fourier transform approach to solve a fixed point equation, which also allowed them to incorporate L\'evy jumps in their analysis in a relatively straightforward manner.  
In \cite{hutzenthaler2022multilevel}, the authors used a multilevel Picard iteration to develop numerical estimates of the solution of a class of MV-SDEs in high dimensions. They are able to prove that their numerical approach is computationally efficient and avoids the curse of dimensionality unlike the IPS approach. 
{In a way, many of the methods mentioned are of decoupling-type: in a first step the measure component (or a functional thereof) is approximated, said approximation is plugged back into the Mckean-Vlasov equation's dynamics yielding an SDE that is then approximated in the usual fashion---such approaches have been used for importance sampling \cite{dos2023importance} or to deploy Koopman transfer operator approaches to the non-linear semi-group operator of MV-SDEs \cite{ioannou2025data}.} 
In \cite{chau:garc:15} the authors developed a cubature method based approach to solve decoupled forward-backward MV-SDEs. 
{Noteworthy of mention is the recent work of \cite{chassagneux2024computing} in the context of ergodic simulation of the invariant distribution of McKean-Vlasov equations---a setting we do not explore. There, one replaces the measure argument in the coefficients of the MV-SDE by the process' occupation (empirical) measure. This transforms the MV-SDE into a \emph{self‐interacting diffusion} that has full history dependence in its coefficients. The methodology remains then  single‐trajectory (no interacting particles) and thus avoiding the classical particle‐system propagation‐of‐chaos costs. }

Recently, there have been tremendous developments in using machine learning techniques to efficiently solve nonlinear PDEs in high dimensions o
wing to the seminal work \cite{han2018solving}. Typically in this approach, an appropriate approximation of the forward process related to the nonlinear PDE and an approximation of the solution in terms of its gradient are provided as inputs to a feedforward neural network. The network is then trained to learn the mapping between the forward process and the gradient of the solution using a SGD method which minimizes a suitable loss function based on the terminal condition of the nonlinear PDE. It has been proven under different settings of nonlinear PDEs (for example, \cite{hutzenthaler2020proof}, \cite{hutzenthaler2020overcoming}) that the machine-learning-based method reliant on SGD avoids the curse of dimensionality. This approach of using machine learning to estimate the solution of a nonlinear PDE has also been successfully extended to solve McKean-Vlasov stochastic control problems, see, for example, \cite{carmona2021convergence}, \cite{carmona2022convergence}. Our proposed approach to solve an MV-SDE using SGD is inspired by these aforementioned works (and the references therein). We do not rely on a neural network to learn the solution but instead use SGD to learn optimal weights in a polynomial basis function representation of the unknown functions in the considered MV-SDE. The particular class of MV-SDE we consider in this work \eqref{eq:new_MKV} allows us to develop a more targeted approach, which is naturally more efficient than a general neural network based estimation approach.

For clarity of exposition, we work with a class of MV-SDEs in which the coefficients are separable with respect to their dependence on the measure and state of the MV-SDE{, namely
\begin{equation}
\label{eq:new_MKV_bis}
    \dd X_t = \sum_{j=1}^K \Big( \bE[\varphi_{1,j}(X_t)] \alpha_j(t,X_t)  \dd t + \bE[ \varphi_{2,j}(X_t)] \beta_{j}(t,X_t) \dd W_t \Big), \qquad 
     X_0=\xi.  
\end{equation}
Based on the recent work \cite{belomestny2018projected}, a wide class of coefficients} can be projected on a Fourier basis giving rise to the class of MV-SDEs in \eqref{eq:new_MKV_bis}. We show that the unknown functions of time $\bar\gamma_{1,j}(t):=\bE[\varphi_{1,j}(X_t)]$ and $\bar\gamma_{2,j}(t):=\bE[\varphi_{2,j}(X_t)]$ satisfy a fixed-point equation, which we then pose as a minimization problem over { the space of continuous functions}. Next, we reformulate the infinite-dimensional minimization problem as a finite-dimensional one by looking for an approximate solution in a suitable finite-dimensional subdomain, a typical choice being the space of $n$-th order polynomials. Once a basis for the latter is fixed, the objective function can be parametrized as a function on a Euclidean space. To minimize it we propose an algorithm based on SGD, where the stochastic gradient of the objective function depends on the solution to a Markovian SDE and on its gradient with respect to the parameters.   
To compute the gradient in SGD, we employ the Euler-discretization of the relevant SDEs.  To the best of our knowledge, our approach is the first to deploy an SGD-type algorithm to solve MV-SDEs. Moreover, we also provide a ``mini-batch'' version of our SGD based algorithm that greatly improves the efficiency of the algorithm. 

In this work, we essentially build a theoretical foundation for the study of SGD approach applied to numerical approximation of MV-SDEs. We provide a convergence study and establish the related properties for such purpose. Concretely, the convergence of our method is reliant on stability estimates of the related Markovian SDE and tangent processes, including a general interest result on the higher-order time-regularity property of the maps $t \mapsto \bE[\varphi_{\cdot,\cdot}(X_t)]$. 
The moment estimates and Lipschitz property of the tangent process with respect to the unknown function also allows us to establish an explicit formula for the gradient of the loss function. Moreover, by performing a suitable penalization of the loss function, we are able to obtain Lipschitz continuity of its gradient. These intermediate results allow us to use the existing convergence results of descent methods by \cite{bertsekas2000gradient} and \cite{patel2021stochastic} to show convergence of our algorithm to a stationary point. { \label{aux:RandomPageLabel-page3}As our minimisation problem is non-convex, classical results that ensure convergence of SGD algorithms to the global minimum do not apply to our setting. On the contrary, we rely on the developments in \cite{bertsekas2000gradient} and \cite{patel2021stochastic} for the convergence of SGD algorithms to stationary points, in non-convex settings. We observe that the study of gradient descent algorithms is an active research field, also with attempts of proving convergence to the global minimum in the non-convex setting. For instance, in \cite{fehrman2020convergence}, the authors prove global convergence for a particular version of the SGD algorithm that makes use of mini-batches and randomized initial point, under suitable geometric assumptions for the objective function. Checking these assumptions in our setting would be quite complex, thus, for the purpose of readability, we only verify the assumptions in \cite{bertsekas2000gradient} and \cite{patel2021stochastic} to ensure convergence to a stationary point.} Through extensive numerical studies, we demonstrate the excellent performance of our SGD algorithm in solving well-studied MV-SDEs. In particular, we measure the accuracy of our method by computing relative error with respect to the benchmarks obtained by the IPS approach, and show that we achieve very low error values with a small computational budget.  

Overall, the results of this paper can be viewed as a proof of concept for the application of stochastic gradient descent algorithms as a valid tool to deliver numerical approximations of MV-SDEs without the simulation of IPSs. 
{ 
In order to reduce the technicalities and keep the proofs as transparent as possible, the convergence analysis is performed here in the case of bounded coefficients. However, it could be extended to the case of $\alpha_j(t,\cdot)$, $\beta_j(t,\cdot)$ with sub-linear growth and $\varphi_{1,j}$ and $\varphi_{2,j}$ with polynomial growth. We discuss this issue further below {(see Remark \ref{rem:sublin})}. In particular, the numerical tests presented in Section \ref{sec:quadratic} confirm that the approach is viable also when the coefficients $\varphi_{1,j}$ and $\varphi_{2,j}$ have quadratic growth.} 
An extension of this methodology to the case of H\"older continuous drift coefficients seems feasible, up to dealing with some technical issues arising from the fact that the gradient process becomes a diffusion with singular (distributional) drift coefficients. 
We also expect this approach to be suitable for extensions to the jump-diffusion framework. 
Finally, an interesting and challenging extension is in regards the adapting this methodology to frameworks where the simulation of the associated IPS is less amenable, e.g. conditional MV-SDEs or MV-SDEs with moderate interaction among others.

The rest of the paper is organised as follows. At the start of Section \ref{sec:Seperablecoefficients}, we provide the main setting of our considered MV-SDEs and { for convenience of the reader Table \ref{tab:notation} and Table \ref{tab:notation222} summarize the notation used throughout. }  
We  formulate the fixed-point problem for the unknown measure-dependent function in the MV-SDEs and then cast it as a minimization problem over the space of continuous functions. In Section \ref{sec:ProjOnFiniteDimSpace}, we introduce the finite-dimensional approximation of the original domain and provide an upper bound for the error we incur by solving the minimization problem on the reduced domain.  
We propose our SGD based algorithm to solve the minimization problem at the beginning of Section \ref{sec:sgd for mvsde}, where we also provide a ``mini-batch'' version of it. The convergence proof of the algorithm is presented in Section \ref{sec:convergence} along with the necessary intermediate results. The results from the extensive set of numerical experiments are presented in Section \ref{sec:NumericalStudy}. 
The proofs of all the main theoretical results are relegated to the Appendix.

\begin{table}[H]
{\begin{center}
\begin{tabular}{|c|l|}
        \hline
        \textbf{Notation} & \textbf{Description} \\ 
        \hline
        $\mathbb{N}$ & Set of natural numbers starting at 1 \\ 
        $\mathbb{N}_0 := \mathbb{N} \cup \{0\}$ & Set of natural numbers including 0 \\ 
        $\mathbb{R}$ & Set of real numbers \\ 
        $\mathbb{R}_+ := [0,\infty)$ & Set of non-negative real numbers \\ 
        $\langle x, y \rangle$ & Euclidean scalar product for $x, y \in \mathbb{R}^m$ \\ 
        $|x|$ & Euclidean norm for $x \in \mathbb{R}^m$ \\ 
        $\text{Tr}(A)$ & Trace operator of a square matrix $A$ with real entries
				\\ 
        $L^p([0,T],\mathbb{R}^m)$ & (or $L^p([0,T])$ in short) is the Space of Borel-measurable functions  \\
        & $f: [0,T] \to \mathbb{R}^m$ with $\| f \|_{L^p([0,T])} = \left(\int_0^T |f(t)|^p dt \right)^{\frac{1}{p}} < \infty,$ for $p \geq 1$. 
				\\ 
        $L^\infty([0,T],\mathbb{R}^m)$ &  (or $L^\infty([0,T])$ in short) is the Space of Borel-measurable functions \\ 
         & $f: [0,T] \to \mathbb{R}^m$ with   $\| f \|_{L^\infty([0,T])} = \sup_{t \in [0,T]} |f(t)| < \infty$
				\\ 
        $C([0,T],\mathbb{R}^m)$ & Space of continuous functions $f: [0,T] \to \mathbb{R}^m$ endowed with supremum \\
        &norm  $\| f \|_{\infty} = \sup_{t \in [0,T]} |f(t)|$
				\\
        $C^n_b([0,T])$ 
				& Space of $n$-times continuously differentiable functions $f:[0,T] \to \Rb^m$\\
        & with bounded derivatives
				\\
        $C_b^{n,q}([0,T] \times \mathbb{R}^d)$ & Space of continuous functions $f= f(t,x):[0,T] \times \mathbb{R}^d \to \Rb^m$, $n$-times\\
        & differentiable in $t$, $q$-times in $x$, with bounded and continuous derivatives\\
        $\partial_{x_i} f$ & Partial derivative of $f= f(x) : \Rb^m \to \Rb^n$ with respect to component $x_i$\\
        $\nabla_x f$ & Jacobian $(n \times m)$-matrix. If $n=1$, it is the gradient vector\\
        $H_x f$ & Hessian $(n \times m \times m)$-tensor of $f$. 
        If $n=1$, it is the Hessian matrix\\
        $(\Omega, \cF, (\cF_t)_{t\in[0,T]},\bP)$ & Filtered probability space, with $(\cF_t)_{t\in[0,T]}$ satisfying the usual\\
        &assumptions of completeness and continuity\\ 	
        $\mathbb{E}[\cdot]$ & Expectation operator under measure $\bP$ \\
        $L^p$ & Space of random variables $\xi$ such that $\|\xi\|_{L^p}=\mathbb{E}[|\xi|^p]^{\frac1p} < \infty,$  for $p \geq 1$
				\\
        $\mathbb{L}^p_{[0,T]}$ 
				& Space of progressively measurable processes $X$ such that \\
				& $\|X\|_{\mathbb{L}^p_{[0,T]}}=\bigl( \int_0^T \mathbb{E}[|X_t|^p] dt\bigr)^{\frac1p} < \infty$, for $p \geq 1$
				\\
	$\cS^{p}_{[0,T]}$ %
	& Space of progressively measurable processes $X$ such that\\
	& $\|X\|_{\cS^{p}_{[0,T]}}=\mathbb{E} [ \sup_{t \in [0,T]} |X_t|^p ]^{\frac1p} < \infty,$  for $p \geq 1$\\
        $\cS^{\infty}_{[0,T]}$ 
	& Space of progressively measurable processes $X$ such that\\
        & $\|X\|_{\cS^{\infty}_{[0,T]}}=\esssup_{\omega \in \Omega} \sup_{0 \leq t \leq T} |X_t| < \infty$\\
        \hline
    \end{tabular}
    \end{center}\caption{Main notation for spaces and classic operators.}\label{tab:notation}}
    \end{table}

\small
\begin{table}[H]
{\begin{center}
\begin{tabular}{|c|l|}
        \hline
        \textbf{Notation} & \textbf{Description} \\ 
        \hline
        $X$ & Solution of the reference MV-SDE in \eqref{eq:new_MKV}\\
$Z^{\gamma}$ &  Solution of \eqref{eq:SDE_Z} given an input function $\gamma \in L^{\infty}([0,T],\bR^K)$ 
\\
 $\Lc  $ & Lifting map \eqref{eq:lifting_operator}, $ \mathbb{R}^{(n+1)\times K} \ni a \mapsto \Lc a :=  a_0g_0+   \cdots + a_n g_n \in L^\infty([0,T])$, \\ 
         & with $g_0,\cdots,g_n$ being linearly independent vectors of $C([0,T],\bR)$
				\\
$F$ & Functional \eqref{eq:minimization} to be minimized \\ 
$\h$, $H$ & Auxiliary truncation and penalization maps \eqref{eq:h} and \eqref{eq:Hn}\\ 
$G$ & Functional \eqref{eq:FunctionalG} (see also \eqref{eq:min_Sn}) to be minimized (built from $F$)\\ 
$Z^{a}$ &  Solution of  \eqref{eq:SDE_Z_ter}, i.e. $Z^\gamma$ in \eqref{eq:SDE_Z}  when $\gamma =\h\circ \Lc a$ for given $a \in \mathbb{R}^{(n+1)\times K}$
\\
$Y^{a;k,j}$, $Y^{k,j}$ & Tangent process $\partial_{a_{k,j}} Z^a$ in \eqref{eq:derivative_Z} 
\\
$v$, $v_{k,j}$ & Gradient functional for the SGD algorithm in \eqref{eq:v_function}\\
        \hline
    \end{tabular}
    \end{center}\caption{Main notation for primary processes and auxiliary functions.}\label{tab:notation222}}
    \end{table}
\normalsize

\section{McKean-Vlasov SDEs with separable coefficients}
\label{sec:Seperablecoefficients}
Let $T>0$ be a fixed time horizon. All the stochastic differential equations in this article are considered up to time $T$.
We consider the class of $\Rb^d$-valued MV-SDEs in \eqref{eq:new_MKV_bis}, where the interaction through the state is separated from the interaction through the law. 
Their dynamics can be compactly written as
\begin{equation}
\label{eq:new_MKV}
\begin{cases}
    \dd X_t = \bar\gamma(t) \big( \alpha(t,X_t)  \dd t +  \beta(t,X_t) \dd W_t \big), \qquad 
     X_0=\xi,\\
 \bar\gamma(t)= \bE[\varphi(X_t)], 
    \end{cases}    
\end{equation}
for measurable maps $\alpha: [0,T] \times \Rb^d \to \mathbb{R}^{K\times d}$, $\beta: [0,T] \times \Rb^d  \to \mathbb{R}^{K \times d \times q}$, $\varphi: \Rb^d \to \Rb^K $, $\xi$ a random variable, and with $W$ being a $q$-dimensional Brownian motion (column vector). The full probabilistic framework can be found in the Appendix.

\begin{remark}
\label{rem:remark123}
The framework \eqref{eq:new_MKV} of $\bar\gamma$ maps can be extended to the type  $\bar\gamma(\cdot)= h \big( \bE[\varphi(X_\cdot)] \big)$ for some invertible and suitably regular function $h$. We present our results under the setting of \eqref{eq:new_MKV} only for the sake of simplicity. 
\end{remark}

Our starting point could also have been a MV-SDE with more general coefficients, to which we could have applied a projection over a Fourier basis (see \cite{belomestny2018projected}), but such an exercise would still have resulted in the class of MV-SDEs considered here. { For example, in Section \ref{sec:convol} we show, following \cite[Section 2]{belomestny2018projected}, that an MV-SDE with drift coefficient given by $\mathbb{E}\left[\exp\left(-{(X_t- x)^2}/{2}\right)\right]|_{x=X_t}$ can be approximated by a system in the form of \eqref{eq:new_MKV}, to which we apply our SGD algorithm.}

To proceed further, we first present the well-posedness and regularity assumptions.

\begin{assumption}
\label{ass:initial}
The random variable $\xi\in L^2$.  
\end{assumption} 
\begin{assumption}
\label{ass:regularity-for-SDE-no-basis}
The functions $\alpha,\beta\in C([0,T]\times \Rb^d)$ 
and there exists $\RR>0$ such that 
\begin{equation}\label{eq:lip_assump}
| \alpha(t,x) - \alpha(t,y) | + | \beta(t,x) - \beta(t,y) | + |\varphi(x) - \varphi(y) |  \leq \RR  |x-y| , \qquad   x,y\in\Rb^d ,  \ t\in [0,T],  
\end{equation}
and $\alpha$, $\beta$ and $\varphi$ are bounded by $\RR$. 
\end{assumption}


The following well-posedness result is classical.
\begin{proposition}
\label{prop:well-posedness-of-X}
Let Assumption \ref{ass:initial} and \ref{ass:regularity-for-SDE-no-basis} hold true. Then, 
there exists a unique solution $X = (X_t)_{t\in [0,T]}$ to MV-SDE \eqref{eq:new_MKV} in $\cS^2_{[0,T]}.$ Furthermore, $\bar\gamma\in C_b([0,T])$.
\end{proposition}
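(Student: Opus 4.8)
The plan is to prove existence and uniqueness of the MV-SDE solution via a fixed-point argument on the deterministic map $\bar\gamma \in C_b([0,T])$, rather than attempting a direct Picard iteration on the process $X$ itself. The key observation is that the coefficients in \eqref{eq:new_MKV} depend on the law only through $\bar\gamma(t)=\bE[\varphi(X_t)]$, which is a finite-dimensional ($\Rb^K$-valued) continuous function of time. This reduces the problem to a fixed point in the Banach space $C_b([0,T],\Rb^K)$ rather than in a Wasserstein space of measures, which simplifies matters considerably.

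Concretely, I would define a map $\Phi: C_b([0,T],\Rb^K) \to C_b([0,T],\Rb^K)$ as follows. Given a candidate $g \in C_b([0,T],\Rb^K)$, freeze it in the coefficients and consider the \emph{standard} (Markovian) SDE
\begin{equation}
\dd X^g_t = g(t)\,\alpha(t,X^g_t)\,\dd t + g(t)\,\beta(t,X^g_t)\,\dd W_t, \qquad X^g_0 = \xi.
\end{equation}
Under Assumption \ref{ass:regularity-for-SDE-no-basis}, the drift $x \mapsto g(t)\alpha(t,x)$ and diffusion $x\mapsto g(t)\beta(t,x)$ are globally Lipschitz in $x$ uniformly in $t$ (with constant controlled by $\|g\|_\infty R$) and of linear growth, so classical SDE theory gives a unique solution $X^g \in \cS^2_{[0,T]}$. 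I then set $\Phi(g)(t) := \bE[\varphi(X^g_t)]$ and seek $g$ with $\Phi(g)=g$; any such fixed point yields a solution to \eqref{eq:new_MKV}, and conversely.

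The main steps are then: (i) verify $\Phi$ maps into $C_b([0,T],\Rb^K)$ --- boundedness is immediate since $\varphi$ is bounded by $R$, so $\|\Phi(g)\|_\infty \le R$ for all $g$, and continuity in $t$ follows from the $L^2$-continuity of $t\mapsto X^g_t$ (via a standard Burkholder--Davis--Gundy and Gr\"onwall estimate on $\bE|X^g_t - X^g_s|^2$) combined with the Lipschitz property of $\varphi$; (ii) establish a stability/contraction estimate. For two inputs $g,h$, I would estimate $\bE\sup_{s\le t}|X^g_s - X^h_s|^2$ by splitting the difference of the coefficients as $g\alpha(\cdot,X^g) - h\alpha(\cdot,X^h) = (g-h)\alpha(\cdot,X^g) + h(\alpha(\cdot,X^g)-\alpha(\cdot,X^h))$, and similarly for $\beta$. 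Using boundedness of $\alpha,\beta$ on the first terms and the Lipschitz property on the second, BDG and Gr\"onwall yield
\begin{equation}
\bE\sup_{s\le t}|X^g_s - X^h_s|^2 \le C\int_0^t |g(s)-h(s)|^2\,\dd s,
\end{equation}
from which the Lipschitz property of $\varphi$ gives $|\Phi(g)(t)-\Phi(h)(t)|^2 \le C\int_0^t |g(s)-h(s)|^2\,\dd s$.

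The standard way to close the argument is to iterate this inequality: substituting the bound into itself repeatedly shows that the $n$-fold composition $\Phi^n$ satisfies $\|\Phi^n(g)-\Phi^n(h)\|_\infty^2 \le (C T)^n/n! \cdot \|g-h\|_\infty^2$, so some iterate $\Phi^n$ is a strict contraction on $C_b([0,T],\Rb^K)$; the Banach fixed-point theorem (applied to $\Phi^n$, whose unique fixed point is also the unique fixed point of $\Phi$) then gives existence and uniqueness of $\bar\gamma$. Well-posedness of $X$ in $\cS^2_{[0,T]}$ follows by solving the frozen SDE with $g=\bar\gamma$, and the claim $\bar\gamma\in C_b([0,T])$ is exactly that the fixed point lies in the space on which $\Phi$ acts. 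The main obstacle, though entirely routine here, is organising the moment and stability estimates cleanly; the only subtlety worth care is keeping the dependence of the Gr\"onwall constants on $\|g\|_\infty$ and $\|h\|_\infty$ under control, which is harmless because all relevant inputs are a priori bounded (any fixed point automatically satisfies $\|g\|_\infty\le R$, so one may restrict $\Phi$ to the closed ball of radius $R$ from the outset).
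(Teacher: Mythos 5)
Your proof is correct, but it takes a genuinely different route from the paper. The paper disposes of this proposition by verifying the hypotheses of a general well-posedness theorem for MV-SDEs (\cite[Theorem~4.21]{carmona2018probabilistic}): under Assumption \ref{ass:regularity-for-SDE-no-basis} the coefficients of \eqref{eq:new_MKV} are bounded, Lipschitz in space uniformly in the measure, and Lipschitz in the Wasserstein-2 distance uniformly in space, so the classical result (itself proved by a fixed point over flows of measures) applies directly; the continuity and boundedness of $\bar\gamma$ are then observed separately. You instead exploit the separable structure of the coefficients to run a Picard/fixed-point argument on the finite-dimensional function $\bar\gamma$ itself, i.e.\ on the map $\Phi(g)(t)=\bE[\varphi(X^g_t)]$ acting on (a ball of radius $R$ in) $C_b([0,T],\Rb^K)$, which is exactly the operator $\Psi$ of Section \ref{sec:ApproxViaSGD} and the fixed-point equation \eqref{eq:fixed_point}; your stability estimate is essentially Proposition \ref{prop:parameterestimates}. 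Your route is more self-contained and more closely aligned with the machinery the paper builds anyway, at the cost of only working for the separable class; the paper's citation buys brevity and covers general Lipschitz-in-Wasserstein coefficients. The one point you rightly flag --- that the Gr\"onwall constant depends on $\|g\|_\infty$, $\|h\|_\infty$ --- is handled correctly by restricting $\Phi$ to the invariant closed ball $\|g\|_\infty\le \|\varphi\|_\infty\le R$ (noting also that any solution of \eqref{eq:new_MKV} produces a $\bar\gamma$ in that ball, so uniqueness of the fixed point there does give uniqueness of $X$), and the $(CT)^n/n!$ iterate-contraction closes the argument.
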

The above result follows from \cite[Theorem 4.21 (p.236)]{CarmonaDelarue_book_I}). { In particular, t}he fact that $\bar\gamma$ is bounded and continuous is straightforward as $\varphi$ is bounded and continuous.

{\begin{remark}[Growth assumptions]\label{rem:sublin}
We expect the conclusion of Proposition \ref{prop:well-posedness-of-X} to still hold true when replacing the condition in Assumption \ref{ass:regularity-for-SDE-no-basis} of boundedness on $\alpha(t,\cdot)$ and $\beta(t,\cdot)$ with sub-linear growth, the latter implied by the Lipschitz condition \eqref{eq:lip_assump}. However, we are not aware of this result in the literature. The proof would take advantage of the specific structure of the coefficients, which are separable with respect to the state and measure component. Also, we expect the result to be true by assuming the function $\varphi$ only locally Lipschitz continuous with polynomial growth, provided the initial datum has suitably many finite moments.
\end{remark}}

{\begin{remark}[Regularity assumptions]\label{rem:regularityAssumptions}
Concerning the well-posedness of MV-SDEs and the moment estimates of its solutions, Lipschitz continuity of the coefficients is a rather standard assumption. The literature on MV-SDEs also contains results for less regular coefficients (see \cite{de2020strong} and the references therein), but these typically require additional structural assumptions, e.g. ellipticity conditions for the diffusion coefficient. Also, we rely on the Lipschitz continuity of $\alpha(t,\cdot)$ and $\beta(t,\cdot)$ in the convergence analysis of the SGD algorithm (see Section \ref{alg:SGDMVSDE} below). 
Thus, we prefer to introduce now this regularity requirement.
\end{remark}}

{\begin{example}\label{example:onCoefficients}
Assumption \ref{ass:regularity-for-SDE-no-basis} requires the coefficient functions $\alpha(t,\cdot)$, $\beta(t,\cdot)$ and $\varphi$ to be bounded and Lipschitz continuous. Among the elementary functions, this class includes several trigonometric functions (e.g. the model in Section \ref{sec:kuramoto}), bounded rational functions, Gaussian-type functions, as well sums and products of these. On the other hand, polynomial functions do not satisfy Assumption \ref{ass:regularity-for-SDE-no-basis}. However, affine functions $\alpha(t,\cdot), \beta(t,\cdot)$ and  polynomial type functions $\varphi$ are allowed in the extended framework discussed in Remark \ref{rem:sublin} (e.g. the model in Section \ref{sec:quadratic}).
\end{example}}


We now establish a time-regularity result for $\bar\gamma$, which can be achieved by requiring more regularity on the function $\varphi$ and on the coefficients $\alpha,\beta$. { We will see in Section \ref{sec:ProjOnFiniteDimSpace} that one can take advantage of this higher regularity to have a faster convergence of the solution to the finite-dimensional minimization problem to the true $\bar\gamma$ (see Proposition \ref{prop:opt real seq} below).}
\begin{assumption}\label{assum:extra_reg}
There exists $N\in \mathbb{N}$ and $\RR>0$ such that $\alpha,\beta \in C_b^{N-1,2(N-1)}([0,T] \times \mathbb{R}^d)$ and  
$\varphi\in C^{2 N}_b(\mathbb{R}^d)$, with $\alpha$, $\beta$, $\varphi$ and all their derivatives bounded by $\RR$.
\end{assumption}

\begin{proposition}[Higher-order smoothness of $\bar\gamma$]
\label{prop:regularity_gamma}
Let Assumption \ref{ass:initial}, \ref{ass:regularity-for-SDE-no-basis} and \ref{assum:extra_reg} with some $N\in\mathbb{N}$ hold true. 
Then, the function $\bar\gamma$ defined in \eqref{eq:new_MKV} is in $C^N_b([0,T])$ and, for any $m=1,\cdots, N$, we have  
\begin{equation}\label{eq:represent_deriv_gamma}
\frac{\dd^m}{\dd t^m} \bar\gamma(t) = \Eb\big[ \big(\Kc^m \varphi\big) ( t, X_t ) \big], \qquad t\in [0,T], 
\end{equation}
where $\Kc$ is the Kolmogorov operator acting on a function $u\in C_b^{1,2}([0,T] \times \mathbb{R}^d)$ as
\begin{equation}
\Kc u (t,x) =  \partial_t u + \frac{1}{2} \textrm{Tr} \Big(  
\bar\gamma(t)   \beta(t,x)\beta^\top(t,x)\bar\gamma^\top(t) H_x u (t,x)
  \Big) + \big\langle \bar\gamma(t) \alpha(t,x) ,  \nabla_x u (t,x) \big \rangle ,
\end{equation}
with $\Kc^m u$ being the $m$-times composition of $\Kc$ acting on $u$ as 
\begin{equation}
[0,T] \times \mathbb{R}^d \ni (t,x)\mapsto 
\big(\Kc^m u\big)(t,x) =\underbrace{ \Kc \big( \cdots \Kc(\Kc u) \cdots \big)}_{m \text{ times}}(t,x).
\end{equation} 
\end{proposition}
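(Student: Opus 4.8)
The plan is to freeze the (as yet only continuous) map $\bar\gamma$ and regard $X$ as the solution of the \emph{Markovian} SDE $\dd X_t = \bar\gamma(t)\alpha(t,X_t)\dd t + \bar\gamma(t)\beta(t,X_t)\dd W_t$, whose infinitesimal generator is exactly the operator $\Kc$ in the statement. The whole argument rests on one elementary identity: if $u\in C_b^{1,2}([0,T]\times\Rb^d)$, then Itô's formula applied to $u(t,X_t)$ gives $\dd u(t,X_t) = (\Kc u)(t,X_t)\dd t + \langle \nabla_x u(t,X_t), \bar\gamma(t)\beta(t,X_t)\dd W_t\rangle$; since $\nabla_x u$ and $\bar\gamma\beta$ are bounded, the stochastic integral is a true ($L^2$-)martingale, so taking expectations yields
\[
\bE[u(t,X_t)] = \bE[u(0,\xi)] + \int_0^t \bE\big[(\Kc u)(s,X_s)\big]\,\dd s, \qquad t\in[0,T].
\]
If in addition $\Kc u$ is bounded and jointly continuous, then $s\mapsto \bE[(\Kc u)(s,X_s)]$ is continuous (path-continuity of $X$ plus dominated convergence), so $t\mapsto \bE[u(t,X_t)]$ is $C^1$ with derivative $\bE[(\Kc u)(t,X_t)]$. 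Applying this with $u=\varphi$ (for which $\partial_t\varphi=0$) already settles the case $n=1$.

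I would then run an induction on $n=0,1,\dots,N$, writing $u_n:=\Kc^n\varphi$, with hypothesis $H(n)$: \emph{``$u_0,\dots,u_n$ are well defined, bounded and jointly continuous, $\bar\gamma\in C^n_b([0,T])$, and $\tfrac{\dd^k}{\dd t^k}\bar\gamma=\bE[u_k(\cdot,X_\cdot)]$ for all $k\le n$.''} The base case $H(0)$ is Proposition \ref{prop:well-posedness-of-X}. For the step $H(n)\Rightarrow H(n+1)$ (with $n\le N-1$), knowing $\bar\gamma\in C^n_b$ makes $u_n\in C^{1,2}_b$, so the displayed identity applies to $u_n$; using $H(n)$ to rewrite its left-hand side as $\tfrac{\dd^n}{\dd t^n}\bar\gamma(t)$ upgrades $\tfrac{\dd^n}{\dd t^n}\bar\gamma$ to a $C^1$ function with derivative $\bE[u_{n+1}(\cdot,X_\cdot)]$, which is exactly $H(n+1)$. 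The circular appearance of $\bar\gamma$ inside $\Kc$ is precisely what this bootstrap resolves: each level only needs the smoothness of $\bar\gamma$ established at the previous level.

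The crux is the regularity bookkeeping showing that the induction survives exactly $N$ steps under Assumption \ref{assum:extra_reg}. Each application of $\Kc$ differentiates $\varphi$ twice more in space, so $u_n$ depends on spatial derivatives of $\varphi$ up to order $2n$; since $\bar\gamma$ carries no spatial dependence and $\alpha,\beta\in C_b^{N-1,2(N-1)}$, one checks that $u_n$ is $2(N-n)$ times $x$-differentiable with bounded derivatives, so $u_n\in C^2_x$ for $n\le N-1$ (what Itô needs) while $u_N$ is merely continuous. In time, each $\Kc$ differentiates the coefficients (hence $\bar\gamma,\alpha,\beta$) once more, so $u_n$ involves their $t$-derivatives up to order $n-1$; forming $u_n$ and differentiating it once more in $t$ therefore requires $\alpha,\beta$ to be $C^n$ in time (granted for $n\le N-1$) and $\bar\gamma\in C^n_b$ (granted by $H(n)$). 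Matching these two counts against $\varphi\in C^{2N}_b$ and $\alpha,\beta\in C_b^{N-1,2(N-1)}$ is what pins down the value $N$. The remaining verifications are routine: boundedness of each $u_n$ (finite sums of products of bounded functions), its joint continuity (continuity of the relevant coefficient and $\varphi$ derivatives), and the mean-zero property of the martingale term (bounded integrand times a square-integrable stochastic integral). The main obstacle is thus not any single estimate but the disciplined accounting of spatial and temporal derivatives inside $\Kc^n\varphi$, so that each invocation of Itô's formula is legitimate and the integrand whose continuity we exploit is genuinely continuous.
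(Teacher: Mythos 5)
Your proof is correct and follows essentially the same route as the paper's: both reduce to the identity $\frac{\dd}{\dd t}\Eb[f(t,X_t)]=\Eb[\Kc f(t,X_t)]$ for $f\in C^{1,2}_b([0,T]\times\Rb^d)$, establish the base case with $f=\varphi$, and then bootstrap by induction, using Assumption \ref{assum:extra_reg} to guarantee $\Kc^n\varphi\in C^{1,2}_b$ at each step so that the circular dependence of $\Kc$ on $\bar\gamma$ is resolved one derivative at a time. The only difference is cosmetic: you justify the key identity directly via It\^o's formula and the martingale property of the bounded stochastic integral, whereas the paper passes through the Fokker--Planck equation in distributional form followed by a standard approximation argument; your derivative bookkeeping for $\Kc^n\varphi$ is, if anything, more explicit than the paper's.
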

The proof is found in Appendix \ref{sec:proofs}. 

{\begin{remark}\label{rem:ext_reg_der}
By inspecting the proof of Proposition \ref{prop:regularity_gamma} and by Remark \ref{rem:sublin}, one can see that the same conclusion could be derived lifting the boundedness condition in Assumption \ref{assum:extra_reg}, if the derivatives of $\alpha(t,\cdot)$, $\beta(t,\cdot)$ and $\varphi$ have polynomial growth, provided that $X_0$ (thus, $X_t$) has suitably many finite moments.
\end{remark}
\begin{example}\label{example:coefficients}
All the elementary functions satisfying Assumption \ref{ass:regularity-for-SDE-no-basis} discussed in Example \ref{example:onCoefficients} are smooth with bounded derivatives and thus fulfil Assumption \ref{assum:extra_reg}. On the other hand, affine $\alpha(t,\cdot), \beta(t,\cdot)$ and polynomial $\varphi$ are allowed in the extended framework discussed in Remark \ref{rem:ext_reg_der}.
\end{example}
}
 

\subsection[Representation of the gamma map]{Fixed-point equation for $\bar\gamma$}
\label{sec:ApproxViaSGD}

We introduce a closely related SDE to \eqref{eq:new_MKV} which takes a function $\gamma$ as input. Namely, given $\gamma\in L^{\infty}([0,T],\bR^K) $ denote by $Z^{\gamma}$ the solution to the Markovian SDE,
\begin{equation}\label{eq:SDE_Z}
    \dd Z_t = \gamma(t)\big( \alpha(t,Z_t)  \dd t +  \beta(t,Z_t) \dd W_t \big),\qquad Z_0=\xi.
\end{equation} 
The following result is a direct application of Lemma \ref{lem:sde_stability_general}.
\begin{proposition}
\label{prop:parameterestimates}
Let Assumption \ref{ass:initial} and \ref{ass:regularity-for-SDE-no-basis} hold true for some $\RR>0$. Then \eqref{eq:SDE_Z} is strongly well-posed for any $\gamma\in L^{\infty}([0,T],\bR^K) $. Furthermore, for any $\kappa>0$, there exists $C>0$ dependent on $\kappa, T,\RR$  
and $K,$ 
such that
\begin{align}
\label{eq:ErrorIneq-X-gamma-prime}
        \lVert Z^{\gamma} - Z^{\gammat} \rVert_{\cS^2_{[0,T]}} 
        \leq C  
         \|\gamma - \gammat\|_{\infty},
    \end{align}
for any  $\gamma,\gammat \in L^{\infty}([0,T],\bR^K)$ such that $\|\gamma\|_{\infty},\|\gammat\|_{\infty} \leq 
\kappa$.
\end{proposition}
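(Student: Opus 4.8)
The plan is to prove both assertions by the classical theory of Itô SDEs with Lipschitz coefficients, treating $\gamma$ as a fixed bounded measurable coefficient rather than as a variable. For well-posedness, I would observe that, for each fixed $\gamma\in L^{\infty}([0,T],\bR^K)$, the maps $(t,x)\mapsto \gamma(t)\alpha(t,x)$ and $(t,x)\mapsto \gamma(t)\beta(t,x)$ are jointly measurable in $t$, globally Lipschitz in $x$ uniformly in $t$ with constant at most $\|\gamma\|_\infty\RR$ (by Assumption \ref{ass:regularity-for-SDE-no-basis}), and uniformly bounded by $\|\gamma\|_\infty\RR$. Since $\xi\in L^2$ by Assumption \ref{ass:initial}, the standard existence-and-uniqueness theorem for SDEs with Lipschitz, linearly growing coefficients yields a unique strong solution $Z^\gamma\in\cS^2_{[0,T]}$, together with the a priori bound $\|Z^\gamma\|_{\cS^2_{[0,T]}}<\infty$.

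For the stability estimate, fix $\gamma,\gammat$ with $\|\gamma\|_\infty,\|\gammat\|_\infty\leq\kappa$ and set $\Del_t:=Z^\gamma_t-Z^{\gammat}_t$, so that $\Del_0=0$. The key algebraic step is to split each coefficient difference into a \emph{state} part and a \emph{parameter} part; for the drift,
\[
\gamma(t)\alpha(t,Z^\gamma_t)-\gammat(t)\alpha(t,Z^{\gammat}_t)
=\gamma(t)\big(\alpha(t,Z^\gamma_t)-\alpha(t,Z^{\gammat}_t)\big)
+\big(\gamma(t)-\gammat(t)\big)\alpha(t,Z^{\gammat}_t),
\]
and analogously for the diffusion. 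The first term is controlled by $\kappa\RR\,|\Del_t|$ via the Lipschitz bound, while the second is controlled by $\sqrt{K}\,\RR\,\|\gamma-\gammat\|_\infty$ via the boundedness of $\alpha$ and Cauchy--Schwarz over the $K$ components; this last step is the source of the $K$-dependence of the constant in \eqref{eq:ErrorIneq-X-gamma-prime}.

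I would then apply Itô's formula to $|\Del_t|^2$ (or, equivalently, take suprema and expectations in the integrated SDE for $\Del$), bounding the drift integral by Cauchy--Schwarz and the stochastic integral by the Burkholder--Davis--Gundy inequality. Using the split above, this produces, for every $t^*\in[0,T]$, an inequality of the form
\[
\bE\Big[\sup_{s\leq t^*}|\Del_s|^2\Big]
\leq C_1\int_0^{t^*}\bE\Big[\sup_{r\leq s}|\Del_r|^2\Big]\,\dd s
+C_2\,\|\gamma-\gammat\|_\infty^2,
\]
with $C_1,C_2$ depending only on $\kappa,T,\RR,K$. Since $\Del\in\cS^2_{[0,T]}$, the left-hand side is finite, and Gronwall's lemma yields $\bE[\sup_{t\leq T}|\Del_t|^2]\leq C^2\|\gamma-\gammat\|_\infty^2$, which is precisely \eqref{eq:ErrorIneq-X-gamma-prime}.

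I do not expect a deep obstacle here; the argument is essentially routine once the coefficient difference is split cleanly, which is what converts the $\|\gamma-\gammat\|_\infty$ dependence into a constant source term amenable to Gronwall. The two points requiring care are: (i) the supremum inside the $\cS^2_{[0,T]}$-norm forces the use of the Burkholder--Davis--Gundy inequality for the martingale term rather than the plain Itô isometry; and (ii) to invoke Gronwall rigorously one should, strictly speaking, first localize with stopping times $\tau_n=\inf\{t:|\Del_t|\geq n\}$ to ensure all quantities are finite and then pass to the limit by monotone convergence, although the a priori $\cS^2$-bound from the well-posedness step already makes this transparent.
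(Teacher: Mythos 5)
Your argument is correct, and the two points of care you flag (BDG rather than It\^o isometry for the supremum, and localization before Gronwall) are exactly the right ones. The route is different from the paper's only in packaging: the paper proves this proposition in one line as a ``direct application'' of the general stability result Lemma \ref{lem:sde_stability_general} in the appendix (part (c), with $p=1$ and $\rho=\gamma$ deterministic), and that lemma is itself established by citing Krylov's moment and stability theorems rather than by a hands-on Gronwall computation. Mathematically, the key step is the same in both: the cited Krylov stability estimate compares the two coefficient fields evaluated along the \emph{same} process $\cY^{\hat\rho}$, which is precisely your state/parameter splitting of the drift and diffusion differences; your proof simply carries out in full the argument that the citation encapsulates. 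What the paper's abstraction buys is reuse: the same Lemma \ref{lem:sde_stability_general} also delivers the $\cS^{2p}$ and $\bL^{4p}$ versions of these estimates needed later for the tangent processes $Y^{a;h,j}$ in Lemma \ref{lem:estimates_z_Y}, whereas your direct computation would have to be redone (or upgraded to general $p$ via BDG with exponent $2p$) for those. What your version buys is self-containedness and a transparent account of where the constants $C(\kappa,T,\RR,K)$ come from. One minor point: the boundedness of $\alpha,\beta$ (Assumption \ref{ass:regularity-for-SDE-no-basis}) is what lets the parameter term be controlled by $\|\gamma-\gammat\|_\infty$ alone with a constant independent of moments of $Z^{\gammat}$; under mere linear growth you would instead pick up $\|Z^{\gammat}\|_{\cS^2}$-dependence (this is exactly the distinction between parts (b) and (c) of the paper's lemma), and your proof handles this correctly by invoking boundedness at that step.
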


Now, define the operator $\Psi:C\big([0,T],\bR^K) \to C\big([0,T],\bR^K)$ given as 
\begin{equation}\label{eq:SDE_Psi of gamma}
    \Psi(\gamma)(t) : =  \Eb [ \varphi(Z^{\gamma}_t)] , \qquad t\in[0,T], 
\end{equation}
which is well-defined under Assumptions \ref{ass:initial} and \ref{ass:regularity-for-SDE-no-basis}. 

Let us start by observing that solving the MV-SDE \eqref{eq:new_MKV} is equivalent to  finding a fixed point for $
\Psi$. Indeed, if $\bar\gamma$ is a solution to 
\begin{equation}\label{eq:fixed_point}
\gamma = 
 \Psi(
 \gamma) ,
\end{equation}
then the process $Z^{\bar\gamma}$ solves \eqref{eq:new_MKV} and vice-versa. 
Also note that,  
if $\bar\gamma$ solves \eqref{eq:fixed_point}, then $\bar{\gamma}$ solves the minimization problem
\begin{equation}
\label{eq:minimization}
\min_{\gamma\in C([0,T])} F^2(\gamma),
\end{equation}
where 
\begin{equation}\label{eq:def_F}
F : C\big([0,T],\bR^K\big) \to \bR,\qquad F(\gamma): =  \|  \Psi(\gamma) - \gamma  \|_{L^2_{([0,T])}}.
\end{equation}
Therefore, under Assumption \ref{ass:initial} and \ref{ass:regularity-for-SDE-no-basis}, Proposition \ref{prop:well-posedness-of-X} implies that there exists a unique $\bar\gamma \in C\big([0,T],\bR^K\big)$ that solves \eqref{eq:minimization}, in particular, 
\begin{equation}
\label{eq:optimal gamma}
\min_{\gamma\in C([0,T])} F^2(\gamma) = F^2(\bar\gamma)  = 0.
\end{equation}

\begin{lemma}\label{lem:F_lipschitz}
Let Assumption \ref{ass:initial} and \ref{ass:regularity-for-SDE-no-basis} hold true for some $\RR>0$. Then, the functional $F$ in \eqref{eq:minimization} is locally Lipschitz continuous in sup-norm $\|\cdot\|_\infty$. Precisely, for any $\kappa>0$ there exists $C>0$ such that 
\begin{align}
\label{eq:Lip_f}
| F(\gamma) - F(\hat\gamma)| \leq C \lVert \gamma-\hat\gamma \rVert_{\infty}, \qquad \gamma, \hat\gamma \in   C([0,T],\bR^K), \quad \text{with }  \|\gamma\|_{\infty},\|\hat\gamma\|_{\infty} \leq \kappa,
\end{align}
where $C>0$ depends on $\kappa, T,\RR$ 
and $K$.
\end{lemma}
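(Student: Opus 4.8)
The plan is to exploit the fact that $F$ is a composition of a norm with the map $\gamma \mapsto \Psi(\gamma) - \gamma$, so that the Lipschitz estimate reduces, via the reverse triangle inequality, to a Lipschitz estimate for this inner map in the $L^2([0,T])$-norm. Concretely, writing $a = \Psi(\gamma) - \gamma$ and $b = \Psi(\hat\gamma) - \hat\gamma$ and using $\big|\,\|a\|_{L^2([0,T])} - \|b\|_{L^2([0,T])}\,\big| \leq \|a - b\|_{L^2([0,T])}$, I would first reduce to
\begin{equation}
| F(\gamma) - F(\hat\gamma)| \leq \| \Psi(\gamma) - \Psi(\hat\gamma) \|_{L^2([0,T])} + \| \gamma - \hat\gamma \|_{L^2([0,T])}.
\end{equation}
The second summand is immediately controlled by $\sqrt{T}\,\|\gamma - \hat\gamma\|_\infty$, since the interval has length $T$, so all the genuine content lies in bounding the first summand.

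For the $\Psi$-term I would use the definition $\Psi(\gamma)(t) = \bE[\varphi(Z^{\gamma}_t)]$ from \eqref{eq:SDE_Psi of gamma} together with the Lipschitz continuity of $\varphi$ from Assumption \ref{ass:regularity-for-SDE-no-basis} and Jensen's inequality, to get the pointwise bound
\begin{equation}
|\Psi(\gamma)(t) - \Psi(\hat\gamma)(t)| \leq \bE\big[|\varphi(Z^{\gamma}_t) - \varphi(Z^{\hat\gamma}_t)|\big] \leq \RR\, \bE\big[|Z^{\gamma}_t - Z^{\hat\gamma}_t|\big] \leq \RR\,\big(\bE\big[|Z^{\gamma}_t - Z^{\hat\gamma}_t|^2\big]\big)^{1/2}.
\end{equation}
Squaring and integrating over $[0,T]$, and bounding the resulting time integral by $T$ times the $\cS^2_{[0,T]}$-norm, I obtain
\begin{equation}
\| \Psi(\gamma) - \Psi(\hat\gamma) \|_{L^2([0,T])} \leq \RR\,\sqrt{T}\;\lVert Z^{\gamma} - Z^{\hat\gamma} \rVert_{\cS^2_{[0,T]}}.
\end{equation}
At this point the stability estimate \eqref{eq:ErrorIneq-X-gamma-prime} of Proposition \ref{prop:parameterestimates}, valid precisely under the bound $\|\gamma\|_\infty, \|\hat\gamma\|_\infty \leq \kappa$, supplies $\lVert Z^{\gamma} - Z^{\hat\gamma} \rVert_{\cS^2_{[0,T]}} \leq C\,\|\gamma - \hat\gamma\|_\infty$, which is exactly why the Lipschitz continuity is only local and why the constant $C$ inherits its dependence on $\kappa, T, \RR$ and $K$.

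Combining the three displays yields the claim with constant of the form $\sqrt{T}(1 + \RR\,C)$. The step I expect to carry all the weight is the transfer from a bound on the laws of $Z^\gamma$ to the sup-norm perturbation of the input $\gamma$ — but this has already been packaged into Proposition \ref{prop:parameterestimates}, so here it is used as a black box and no further SDE analysis is needed; the only care required is to keep the $\kappa$-ball hypothesis in force throughout so that the stability constant is legitimately available, and to correctly pass from the $\bE[|\cdot|]$ bound to the $\cS^2_{[0,T]}$-norm via Cauchy–Schwarz and the crude $L^\infty_t \to L^2_t$ comparison on a finite interval.
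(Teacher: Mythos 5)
Your proposal is correct and follows essentially the same route as the paper's proof: reverse triangle inequality to reduce to $\|\Psi(\gamma)-\Psi(\hat\gamma)\|_{L^2([0,T])}+\|\gamma-\hat\gamma\|_{L^2([0,T])}$, the crude $\sqrt{T}$ bound for the second term, and Jensen/Cauchy--Schwarz plus the Lipschitz continuity of $\varphi$ and the stability estimate of Proposition \ref{prop:parameterestimates} for the first. The only difference is that you carry out the pointwise-in-$t$ estimate before integrating, whereas the paper works directly with the squared $L^2$ norm; this is purely cosmetic.
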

\begin{proof}
Given $\kappa>0$, take $\gamma, \hat\gamma \in   C([0,T],\bR^K) \ \text{with }  \|\gamma\|_{\infty},\|\hat\gamma\|_{\infty} \leq \kappa$. By the reverse triangle inequality we have     \begin{equation}
\label{eq:lipschitzness-of-f}
\Big|\lVert \Psi(\gamma) - \gamma \rVert_{L^2([0,T])} - \lVert \Psi(\hat\gamma) - \hat\gamma \rVert_{L^2([0,T])} \Big| \leq \lVert \Psi(\gamma) - \Psi(\hat\gamma) \rVert_{L^2([0,T])} + \lVert \gamma - \hat\gamma \rVert_{L^2([0,T])}.
\end{equation}
The second term in the right hand side of \eqref{eq:lipschitzness-of-f} is dominated by $\sqrt{T}    \lVert \gamma- \hat\gamma\rVert_{\infty}.$ For the first term, by the definition of $\Psi$ and by the Lipschitz continuity of $\varphi$, we obtain
    \begin{equation}
        \lVert \Psi(\gamma) - \Psi(\hat\gamma) \rVert^2_{L^2([0,T])} 
        =
      \int_0^T\Big| \bE\big[\varphi(Z^\gamma_t) - \varphi(Z^{\hat\gamma}_t)\big]\Big|^2 \dd t 
 \leq
        C \int_0^T\bE\Big[\big|Z^\gamma_t - Z^{\hat\gamma}_t\big|^2\Big] \dd t  
        \leq
        C\, T \lVert \gamma-\hat\gamma \rVert^2_{\infty},
    \end{equation}
    where the last inequality follows from Proposition \ref{prop:parameterestimates}. Thus, by combining the two bounds for the terms in the right hand side of \eqref{eq:lipschitzness-of-f}, we get the result in \eqref{eq:Lip_f}.
\end{proof}
A quick inspection of the above proof shows that \eqref{eq:Lip_f} also holds with $\|\cdot\|_\infty$ replaced by $\|\cdot\|_{L^2([0,T])}$ since the same is also true for Proposition \ref{prop:parameterestimates}.

\begin{remark}
\label{Rem:More weight towards T}
Depending on the numerical accuracy requirements, instead of $F(\gamma)$ defined above, we could also use an objective function $\widehat F(\gamma)$ given as 
\begin{align*}
\widehat F^2(\gamma) = \int_0^T w(s)|\Psi(\gamma)(s) - \gamma(s)|^2 ds, 
\end{align*}
with a kernel (weight) function $w$ over $[0,T]$. 
For example, we can take $w(t)=C_1e^{C_2 t}$ for $C_1,C_2>0$.  
Such a choice of objective function $\widehat F(\gamma)$ in the minimization problem \eqref{eq:minimization}
would force the stochastic gradient descent algorithm to be more accurate towards the end of the interval $[0,T]$. 
\end{remark}

\subsection{Projection on a finite dimensional domain}
\label{sec:ProjOnFiniteDimSpace}

In order to solve \eqref{eq:minimization} {numerically} using a gradient descent method, {  it is imperative that }we first narrow down the infinite dimensional domain $C([0,T],\bR^{K})$ to a finite dimensional one. {This approach is a natural choice in our setting of solving \eqref{eq:minimization} as the solution $\bar \gamma$ is a continuous map.} Hereafter, we fix $n\in\mathbb{N}$ and a family $g_0,\cdots,g_n$ of linearly independent vectors of $C([0,T],\bR)$.   
Define the finite-dimensional sub-space of $C([0,T],\bR^{K})$ as
\begin{equation}\label{eq:Sn}
    S_n := \{ a_0 g_0 + \cdots + a_n g_n: a=(a_0, \cdots, a_n) \in \mathbb{R}^{(n+1) K   } \},
\end{equation}
and the {distance between $\gamma  \in C([0,T],\bR^{K})$ and $S_n$} as
\begin{equation}\label{eq:defdn}
d_n(\gamma) := \inf\{ \| \gamma - p \|_{\infty} : p \in S_n  \}.
\end{equation} 
Also introduce 
a linear lifting operator 
\begin{equation}\label{eq:lifting_operator}
 \Lc : \mathbb{R}^{(n+1)\times K} \to S_n, \qquad    \Lc a = \Lc (a_0, \cdots, a_n) := a_0g_0+   \cdots + a_n g_n.
\end{equation}

\begin{remark}\label{rem:distance}
The operator $\Lc$ is an isomorphism between finite-dimensional normed vector spaces. In particular, both $\Lc$ and its inverse $\Lc^{-1}$ are continuous and bounded operators. Furthermore, by a compactness argument, it can be easily seen that for any $\gamma \in C([0,T],\bR^K),$ there exists $p_n(\gamma) \in S_n$ such that
\begin{equation}\label{eq:min_dist_polyn}
    \| \gamma - p_n(\gamma) \|_{\infty} = d_n(\gamma). 
\end{equation}
In other words, $d_n$ is well-defined as a minimum and such minimum is attained at $p=p_n(\gamma)$. { It is important to observe that both $p_n(\gamma)$ and $d_n(\gamma)$ only depend on $S_n$ and not on the particular choice of the basis $g_i$, $i=0,\dots,n$.}
\end{remark}

\begin{example}[Polynomial of best approximation]
\label{ex:polynomials}
Consider $n\in\mathbb{N}$ and $g_i (t) = t^i$ for $i=0,\cdots, n$. Then the elements of $S_n$ read as
\begin{equation}
    p(t) = \sum_{i=0}^n a_i t^i, \qquad t\in[0,T].
\end{equation}
{ This choice clearly leads to $S_n = \mathcal{P}_n(\mathcal{R}^K)$, the space of $\mathbb{R}^K$-valued polynomials of order $n$. We stress that $S_n = \mathcal{P}_n(\mathcal{R}^K)$ could be also obtained by fixing a basis $g_i$, $i=0,\dots,n$, which is not the canonical one. Possible choices include Legendre, Chebyshev, Laguerre and Hermite polynomials \cite{MR2006500Suli}. In particular, the numerical results presented in Section \ref{sec:NumericalStudy} refer to the specific choice of $g_i$ given by the Lagrange polynomials centered at Chebyshev points, i.e.
\begin{equation}\label{eq:orthog poly basis}
g_i(t) = \Bigg(   \prod_{\substack{ 0\leqslant l \leqslant n \\ l\neq i}} \frac{t-t_l}{t_i - t_l}  \Bigg), \qquad t_i = \frac{T}{2} + \frac{T}{2} \cos\Bigl( \frac{2i+1}{2n+2}\pi\Bigr),\qquad i=0,1,\ldots,n.
\end{equation}

Finally note that, in particular, the distance $d_n(\gamma)$ and the projection $p_n(\gamma)$ (see Remark \ref{rem:distance}) are the same for all these choices of basis, for they all generate the same finite dimensional space $S_n = \mathcal{P}_n(\mathcal{R}^K)$. In this case, $p_n$ in \eqref{eq:min_dist_polyn} is called the \emph{polynomial of best approximation of $\gamma$} (see \cite[Chapter 5]{isaacson2012analysis}).} 
\end{example}
\begin{example}
\label{ex:spline}
Other examples for the basis $\{g_i, i = 0, 1, \ldots\}$ include 
splines \cite{isogeometricMethods} and smooth Fourier wavelet expansions \cite{MR2466138Oosterlee2008}.   
\end{example} 

Since the maps $\gamma$ are bounded by $\|\varphi\|_\infty$, we introduce the auxiliary map $\h:\Rb^{K}\to\Rb^K$ so that we can ensure that $\h(\Lc a)$ is also bounded. Concretely, let $\h$ be a smooth bounded function $\Rb^{K}\ni x \mapsto \h(x)=\big(\h_1(x_1),\cdots,\h_K(x_K)\big)$ defined as
\begin{equation}
\label{eq:h}
\Rb^K \ni x \mapsto \h(x)= 
\begin{cases}
x ,& \text{if}\quad |x| \leq \| \varphi \|_{\infty} + 2 d_n (\bar\gamma)
\\
\| \varphi \|_{\infty} + 2 d_n (\bar\gamma),& \text{if}\quad |x| \geq \| \varphi \|_{\infty} + 2 d_n (\bar\gamma)
\end{cases},
\end{equation}
where it is clear that 
{$\|\h\|_{\infty} \leq \| \varphi \|_{\infty} + 2 d_n (\bar\gamma)$}.

Let the constant $\rad_n>0$ be such that 
\begin{equation}
B_{\rad_n}\supset \Lc^{-1}\big( B_{\| \varphi \|_{\infty} + d_n (\bar\gamma)} \big),
\end{equation}
where $B_{\rad_n}$ and $B_{\| \varphi \|_{\infty} + d_n (\bar\gamma)}$ denote the balls in $\mathbb{R}^{(n+1) K}$ and  $S_n$, respectively, with radius $\rad_n$ and $\| \varphi \|_{\infty} + d_n (\bar\gamma)$,  centered at zero. Note that such $\rad_n$ exists as $\Lc^{-1}$ is a linear, and thus a bounded operator (see Remark \ref{rem:distance}).
Finally, we consider a non-negative real-valued function $H
\in C^1(\mathbb{R}^{(n+1) K})$ such that 
\begin{equation}\label{eq:Hn}
\text{supp }H
 \subset B^c_{\rad_n}
\quad\textrm{and}\quad 
 \lim_{|a|\to + \infty} H
 (a)=+\infty.
\end{equation}
\begin{example}\label{ex:H}
A function $H
$ satisfying hypothesis \eqref{eq:Hn} above is given by
\begin{equation}
H
(a):= 
\begin{cases}
0,								& \text{if } a \in B_{\rad_n}\\
(|a|-\rad_n)^2,	& \text{otherwise}
\end{cases} , \qquad { \nabla H
(a):= 
\begin{cases}
0,								& \text{if } a \in B_{\rad_n}\\
\frac{2 (|a| - \rad_n)}{|a|}a,	& \text{otherwise}
\end{cases} ,}
\end{equation}
\end{example}
\begin{remark}\label{rem:hH}
Let $\bar\gamma \in C([0,T],\bR^K)$ be the unique minimizer to \eqref{eq:minimization} and consider the element $p_n(\bar\gamma)$ introduced in Remark \ref{rem:distance}. Since $\|\bar\gamma  \|_{\infty} \leq \| \varphi \|_{\infty}$, it is clear that $\|p_n(\bar\gamma)\|_{\infty}\leq \| \varphi \|_{\infty} + d_n(\bar\gamma)$. Therefore, by \eqref{eq:h} and \eqref{eq:Hn}, we obtain that
\begin{equation}
\h
\big( p_n(\bar\gamma) \big) = p_n(\bar\gamma)
\quad\textrm{and}\quad 
H
\big(\Lc^{-1}  p_n(\bar\gamma)\big)\equiv 0.
\end{equation}
\end{remark}
In place of \eqref{eq:minimization}, we consider the following optimization problem
\begin{equation}
\label{eq:min_Sn}
  \min_{a\in \mathbb{R}^{(n+1) K}} G(a)
	\quad\textrm{with}\quad 
	G(a):=	F^2(\h\circ \Lc a) + H
  (a).
\end{equation}

The role of map $\mathbf{h}$ is to ensure that the estimate of $\bar{\gamma}$, defined in \eqref{eq:optimal gamma}, remains bounded when computing the loss function $F$. This is reflected when we compute $F^2(\h\circ \Lc a)$ in \eqref{eq:min_Sn}. Moreover, the function $H$ penalizes the coefficient vector $a$ when the norm of our approximation $\mathcal{L}a$ grows beyond $\| \varphi \|_{\infty} + d_n (\bar\gamma).$ This is reflected in the definition of $H$ in \eqref{eq:Hn} and also motivated in Remark \ref{rem:hH}. 

\begin{lemma}\label{lemm:err_reduction}
Let Assumptions \ref{ass:initial} and \ref{ass:regularity-for-SDE-no-basis} 
hold true for some $\RR>0$. Let $\bar\gamma \in C([0,T],\bR^K)$ be the unique solution to \eqref{eq:minimization} and assume that $\| \varphi \|_{\infty} + d_n(\bar\gamma)\leq \kappa$ for some $\kappa>0$. 
Then the minimization problem \eqref{eq:min_Sn} admits a solution. Furthermore, for any 
\begin{equation}
\bar a_n \in \argmin_{a\in \mathbb{R}^{(n+1) K}} G(a) 
\end{equation}
we have
\begin{equation}\label{eq:estimate_domain_red}
F(\Lc \bar a_n)   \leq C\,  d_n(\bar \gamma) , 
\end{equation}
where $C>0$ depends on $\kappa, T,\RR$ 
and the dimensions.
\end{lemma}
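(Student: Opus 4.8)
The plan is to establish existence of a minimiser by a standard coercivity argument, and then to control the optimal value of $G$ from above by testing $G$ at the single well-chosen point $p_n(\bar\gamma)$, the best approximation of $\bar\gamma$ in $S_n$. For existence, I would first note that $G$ is continuous: $F$ is locally Lipschitz by Lemma \ref{lem:F_lipschitz}, $\h$ is smooth and bounded, $\Lc$ is a continuous linear map by Remark \ref{rem:distance}, and $H\in C^1$. Coercivity then follows from \eqref{eq:Hn}: since $\|\h\|_\infty\le\|\varphi\|_\infty+2d_n(\bar\gamma)$, every function $\h\circ\Lc a$ lies in a fixed sup-norm ball, so that $F(\h\circ\Lc a)=\|\Psi(\h\circ\Lc a)-\h\circ\Lc a\|_{L^2([0,T])}\le\sqrt T\,(2\|\varphi\|_\infty+2d_n(\bar\gamma))$ is bounded uniformly in $a$, while $H(a)\to+\infty$ as $|a|\to\infty$. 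Hence $G(a)\to+\infty$ and $G$ attains its minimum.

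For the quantitative estimate I would take the test point $a^\ast:=\Lc^{-1}p_n(\bar\gamma)$. By Remark \ref{rem:hH} the truncation is inactive and the penalty vanishes there, i.e. $\h(p_n(\bar\gamma))=p_n(\bar\gamma)$ and $H(a^\ast)=0$, so $G(a^\ast)=F^2(p_n(\bar\gamma))$. Since $\bar\gamma$ solves \eqref{eq:minimization} we have $F(\bar\gamma)=0$, and both $\|\bar\gamma\|_\infty\le\|\varphi\|_\infty\le\kappa$ and $\|p_n(\bar\gamma)\|_\infty\le\|\varphi\|_\infty+d_n(\bar\gamma)\le\kappa$ by hypothesis. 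Applying the local Lipschitz estimate \eqref{eq:Lip_f} with $\gamma=p_n(\bar\gamma)$ and $\hat\gamma=\bar\gamma$ gives $F(p_n(\bar\gamma))=|F(p_n(\bar\gamma))-F(\bar\gamma)|\le C\,\|p_n(\bar\gamma)-\bar\gamma\|_\infty=C\,d_n(\bar\gamma)$, whence $G(a^\ast)\le C^2 d_n(\bar\gamma)^2$.

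By minimality $G(\bar a_n)\le G(a^\ast)\le C^2 d_n(\bar\gamma)^2$, and since both summands of $G$ are nonnegative this yields simultaneously $F(\h\circ\Lc\bar a_n)\le C\,d_n(\bar\gamma)$ and $H(\bar a_n)\le C^2 d_n(\bar\gamma)^2$. The remaining step is to pass from the truncated argument $\h\circ\Lc\bar a_n$ to $\Lc\bar a_n$ in order to obtain \eqref{eq:estimate_domain_red}. Here I would exploit the penalty bound $H(\bar a_n)\le C^2 d_n(\bar\gamma)^2$, together with the support and growth properties \eqref{eq:Hn} of $H$, to confine $\bar a_n$ to a controlled neighbourhood of $B_{\rho_n}$ and thereby bound $\|\Lc\bar a_n\|_\infty$ via the boundedness of $\Lc$; a second use of \eqref{eq:Lip_f}, after estimating $\|\Lc\bar a_n-\h\circ\Lc\bar a_n\|_\infty$, then transfers the bound to $F(\Lc\bar a_n)$.

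I expect this last reconciliation between the truncation $\h$ and the penalty $H$ to be the main obstacle. The coercivity and the test-point comparison are essentially routine once \eqref{eq:Lip_f} and Remark \ref{rem:hH} are in hand, but one must verify that the radius $\rho_n$ and the truncation level $\|\varphi\|_\infty+2d_n(\bar\gamma)$ are chosen compatibly, so that at the penalised minimiser $\bar a_n$ the map $\h$ is effectively the identity on $\Lc\bar a_n$ (equivalently, that the penalty keeps $\Lc\bar a_n$ inside the region where no truncation occurs). Granting this, the untruncated estimate \eqref{eq:estimate_domain_red} follows with a constant $C$ depending only on $\kappa,T,\RR$ and the dimensions.
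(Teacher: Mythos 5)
Your proposal follows essentially the same route as the paper: existence via continuity of $G$ (Lemma \ref{lem:F_lipschitz} plus continuity of $\h$, $\Lc$, $H$) together with coercivity from the boundedness of $F^2(\h\circ\Lc a)$ and \eqref{eq:Hn}, and the quantitative bound by testing $G$ at $a^\ast=\Lc^{-1}p_n(\bar\gamma)$, invoking Remark \ref{rem:hH} to kill the truncation and the penalty there, and then \eqref{eq:Lip_f} with $F(\bar\gamma)=0$ to get $G(\bar a_n)\le G(a^\ast)=F^2(p_n(\bar\gamma))\le C\,d_n^2(\bar\gamma)$. Up to this point your argument and the paper's coincide step for step.

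The one place you go beyond the paper is the final reconciliation between $F(\h\circ\Lc\bar a_n)$ and $F(\Lc\bar a_n)$. The paper's displayed chain stops at $F^2(\h\circ\Lc\bar a_n)\le C\,d_n^2(\bar\gamma)$ and simply reads this as \eqref{eq:estimate_domain_red}, i.e.\ it treats the two quantities as interchangeable; you are right to flag that this needs justification. However, the repair you sketch does not quite close it: the penalty bound $H(\bar a_n)\le C\,d_n^2(\bar\gamma)$ together with \eqref{eq:Hn} confines $\bar a_n$ to a neighbourhood of $B_{\rho_n}$, but the defining inclusion $B_{\rho_n}\supset\Lc^{-1}\bigl(B_{\|\varphi\|_\infty+d_n(\bar\gamma)}\bigr)$ only says that small sup-norm of $\Lc a$ forces $|a|\le\rho_n$, not the converse; $\Lc(B_{\rho_n})$ may be much larger than the ball of radius $\|\varphi\|_\infty+2d_n(\bar\gamma)$ on which $\h$ is the identity, so membership of $\bar a_n$ in (a neighbourhood of) $B_{\rho_n}$ does not guarantee that the truncation is inactive at $\Lc\bar a_n$. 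To make this step rigorous one would either need an additional lower bound relating $|a|$ to $\|\Lc a\|_\infty$ (available since $\Lc$ is an isomorphism onto $S_n$, with constants depending on the basis), or simply read the conclusion as a statement about $\h\circ\Lc\bar a_n$, which is what the paper's own computation actually delivers.
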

\begin{proof}
The boundedness of $\varphi$ implies that $F^2(\h\circ \Lc a)$ is bounded. Thus, by \eqref{eq:Hn}, 
$G(a)$ tends to $+\infty$ as $|a| \to \infty$. 
On the other hand, Lemma \ref{lem:F_lipschitz} together with the continuity of $\h$ and $H$  imply that 
$G$ is continuous on $\mathbb{R}^{(n+1) K}$. Therefore, the fact that 
$G$ has at least a minimum $\bar a_n$ on $\mathbb{R}^{(n+1) K}$ stems from Weierstrass' extreme value theorem. 
 
Furthermore, let $\bar a_n$ be a minimiser of $G$ 
we then have 
    \begin{align}
 F^2(\h\circ \Lc \bar a_n) 
\leq  F^2(\h\circ \Lc\bar  a_n)    + H_n(\Lc\bar a_n)
& \leq F^2\big(\h\circ p_n(\bar\gamma)\big)    + H
 \big(\Lc^{-1}  p_n(\bar\gamma)\big) 
\\ 
  & = F^2\big(p_n(\bar\gamma)\big) = \big|F\big( p_n(\bar\gamma)\big) - F(\bar \gamma)\big|^2 
\\
          & \leq C \, \lVert  p_n(\bar\gamma) - \bar \gamma \rVert^2_{\infty} = C \, d^2_n(\bar \gamma) .
  \end{align}
In the above, we used Remark \ref{rem:hH} and Lemma \ref{lem:F_lipschitz}. The result follows since $\| \bar\gamma \|_{\infty}\leq \| \varphi \|_{\infty}$ and $\| p_n(\bar\gamma) \|_{\infty}\leq \| \varphi \|_{\infty} + d_n(\bar\gamma)\leq  \kappa$.
\end{proof}

From the above result we observe that if we can compute the distance $d_n$ explicitly, we have an explicit upper bound for the error encountered when solving \eqref{eq:min_Sn} instead of \eqref{eq:minimization}. For instance, when choosing $S_n$ as the space of the $n$-th order polynomials (see Example \ref{ex:polynomials}), an explicit error estimate can be obtained as shown next.

\begin{proposition}
\label{prop:opt real seq}
Let Assumption \ref{ass:initial}, \ref{ass:regularity-for-SDE-no-basis} and \ref{assum:extra_reg} hold true for some $N\in\mathbb{N}$, $\RR>0$. { Let $g_0,\cdots,g_n$ be a family of linearly independent vectors of $C([0,T],\bR)$ generating the space of the $\mathbb{R}^K$-valued polynomials of order $n$, i.e. $S_n = \mathcal{P}_n(\mathcal{R}^K)$ in \eqref{eq:Sn}.} 
Then the minimization problem \eqref{eq:min_Sn} admits a solution. 
Furthermore, for any 
\begin{equation}
\bar a_n \in \argmin_{a\in \mathbb{R}^{(n+1) K}} G(a)
\end{equation}
we have
\begin{equation}\label{eq:estimate_domain_red_poly}
F(\Lc\bar a_n)   \leq \frac{C}{n^N}, 
\end{equation}
where $C>0$ depends on $T,N,\RR$ 
and the dimensions (and is independent of $n$). 
\end{proposition}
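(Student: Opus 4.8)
The plan is to reduce the statement to a classical approximation-theoretic estimate for the best uniform polynomial approximation of a smooth function. The workhorse is Lemma \ref{lemm:err_reduction}, which already gives $F(\Lc \bar a_n) \leq C\, d_n(\bar\gamma)$ with $C$ independent of $n$, provided its hypothesis $\|\varphi\|_\infty + d_n(\bar\gamma) \leq \kappa$ holds uniformly in $n$. To secure this, I would first note that with $g_i(t)=t^i$ one has $S_n \subset S_{n+1}$, so the distances $d_n(\bar\gamma)$ are non-increasing in $n$; hence $d_n(\bar\gamma) \leq d_0(\bar\gamma) \leq \|\bar\gamma\|_\infty \leq \|\varphi\|_\infty \leq \RR$, and the choice $\kappa = 2\RR$ works for every $n$, making the constant in Lemma \ref{lemm:err_reduction} depend only on $T,\RR$ and the dimensions. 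Everything then reduces to the single estimate $d_n(\bar\gamma) \leq C n^{-N}$.

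Next I would quantify the smoothness of $\bar\gamma$. By Proposition \ref{prop:regularity_gamma}, Assumption \ref{assum:extra_reg} yields $\bar\gamma \in C^N_b([0,T])$ together with the representation $\frac{\dd^N}{\dd t^N}\bar\gamma(t) = \bE[(\Kc^N\varphi)(t,X_t)]$. The crucial point is that $\big\|\frac{\dd^N}{\dd t^N}\bar\gamma\big\|_\infty$ is bounded by a constant depending only on $N,T,\RR$ and the dimensions, in particular independent of $n$: each application of the Kolmogorov operator $\Kc$ consumes one $t$-derivative and two $x$-derivatives of its argument and multiplies by the coefficients $\bar\gamma$, $\alpha$ and $\beta\beta^\top$, all of which — along with their derivatives up to the orders required for $\Kc^N\varphi$ to be well defined — are bounded by $\RR$ under Assumption \ref{assum:extra_reg}. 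Iterating $N$ times, $\Kc^N\varphi$ is a bounded function whose sup-norm is controlled by $\RR,T,N$ and the dimensions, and taking expectations transfers this bound to each component of the $N$-th derivative of $\bar\gamma$.

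With $\bar\gamma \in C^N$ and $\|\bar\gamma^{(N)}\|_\infty$ bounded independently of $n$, the estimate $d_n(\bar\gamma) \leq C n^{-N}$ is precisely Jackson's theorem on the rate of best uniform approximation of a function with $N$ bounded derivatives (see \cite[Chapter 5]{isaacson2012analysis}). Concretely, after rescaling $[0,T]$ to a reference interval (contributing a factor $T^N$ to the constant) and applying the scalar estimate to each of the $K$ components of $\bar\gamma$, the assembled vector-valued polynomial lies in $S_n$ and satisfies $d_n(\bar\gamma) \leq \frac{C}{n^N}\|\bar\gamma^{(N)}\|_\infty$ for $n \geq N$, the factor $\sqrt{K}$ entering when passing from the component-wise bounds to the Euclidean-norm bound and accounting for the stated dependence on the dimensions. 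Combining with $F(\Lc\bar a_n) \leq C\, d_n(\bar\gamma)$ gives \eqref{eq:estimate_domain_red_poly}.

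I expect the only genuinely delicate step to be the uniform-in-$n$ control of $\|\bar\gamma^{(N)}\|_\infty$ through the iterated-Kolmogorov representation: one must check that repeated differentiation introduces no hidden $n$-dependence and that the bounds of Assumption \ref{assum:extra_reg} suffice at every stage of the composition. By contrast, the passage from derivative bounds to the $n^{-N}$ rate is a direct invocation of the classical Jackson estimate, and the remaining bookkeeping — verifying the hypothesis of Lemma \ref{lemm:err_reduction}, the interval rescaling, and the vector-valued assembly — is routine.
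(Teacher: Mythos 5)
Your argument is correct and follows essentially the same route as the paper: invoke Lemma \ref{lemm:err_reduction} to reduce everything to bounding $d_n(\bar\gamma)$, use Proposition \ref{prop:regularity_gamma} together with the representation \eqref{eq:represent_deriv_gamma} to get $n$-independent bounds on $\bar\gamma^{(N)}$, and conclude by Jackson's theorem (the paper cites \cite[Section 5.1.5]{timan1994theory} for this). Your additional verification that the hypothesis $\|\varphi\|_\infty + d_n(\bar\gamma)\leq\kappa$ of Lemma \ref{lemm:err_reduction} holds uniformly in $n$ (via $d_n(\bar\gamma)\leq\|\bar\gamma\|_\infty\leq\RR$) is a detail the paper leaves implicit, but it does not change the argument.
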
 
\begin{proof} 
By Lemma \ref{lemm:err_reduction} the minimization problem \eqref{eq:min_Sn} admits a solution and \eqref{eq:estimate_domain_red} holds true.  
By Proposition \ref{prop:regularity_gamma}, $\bar\gamma$ is $N$-times differentiable on $[0,T]$. In particular, \eqref{eq:represent_deriv_gamma} implies that its derivatives are bounded on $[0,T]$ by a constant that only depends on  $N$, $\RR$ and the dimensions.  

A theorem of D.~Jackson (see \cite[Section 5.1.5 (p.261)]{timan1994theory}) then yields
\begin{equation}
d_n(\bar\gamma)  
 \leq  \frac{C}{n^N} ,
\end{equation}
which concludes the proof. 
\end{proof}

{\begin{remark}\label{remark:LiftingBoundedness}
We stress that the result of Proposition \ref{prop:opt real seq} does not depend on the choice of the basis $g_i$, $i=0,\dots,n$. In particular, it applies to all the possible choices as discussed in Remark \ref{ex:polynomials}. We claim that the results of this section would still hold true after lifting the boundedness assumption on $\alpha$, $\beta$ and $\varphi$. Indeed, in light of Remark \ref{rem:sublin}, we expect \eqref{eq:minimization} to admit a unique solution $\bar\gamma\in C([0,T],\bR^K)$ even if $\alpha(t,\cdot)$, $\beta(t,\cdot)$ have sub-linear growth and $\varphi$ has polynomial growth (provided that the initial datum $X_0=\xi$ has suitably many finite moments). Furthermore, the proof of Lemma \ref{lemm:err_reduction} only relies on the estimates of Lemma \ref{lem:F_lipschitz}, whose proof itself would be identical in this more general context. 
Similarly, by Remark \ref{rem:ext_reg_der}, we expect Proposition \ref{prop:opt real seq} to still hold true if the derivatives of $\alpha(t,\cdot)$, $\beta(t,\cdot)$ and $\varphi$ have polynomial growth.
\end{remark}}

\section{Stochastic gradient descent for MV-SDEs}
\label{sec:sgd for mvsde}
We propose a stochastic gradient descent algorithm (SGD) to solve the minimization problem \eqref{eq:min_Sn}.  
For the convenience of the reader we re-write the objective function $G$ in \eqref{eq:min_Sn}  explicitly as
\begin{equation}\label{eq:FunctionalG}
\mathbb{R}^{(n+1)K} \ni a \mapsto 
G (a) 
= \int_0^T  \big| \Eb \big[ \varphi(Z^{a}_t) - \h\big((\Lc a) (t)\big) \big] \big|^2   \dd t + H(a), 
\end{equation}
where we use the shorthand notation $Z^{a}_t := Z^{\h\circ \Lc a}_t$ to denote the solution to \eqref{eq:SDE_Z} when $\gamma$ is taken as $\gamma=\h\circ \Lc a$ , i.e., $Z^{a}$ takes values in $\Rb^d$ and solves 
\begin{equation}\label{eq:SDE_Z_ter}
    \dd Z_t = \h\big( (\Lc a)(t)\big) \big( \alpha(t,Z_t)  \dd t +   \beta(t,Z_t) \dd W_t \big),\qquad Z_0=\xi.
\end{equation}
We point out that the objective function $G$ only depends on the law of $Z^{a}$ (and $a$). Thus, when weak uniqueness holds (for instance under Assumption \ref{ass:initial} and \ref{ass:regularity-for-SDE-no-basis}),  
$G$ can be defined in terms of any weak solution to \eqref{eq:SDE_Z_ter}. In particular, it only depends on the coefficients of \eqref{eq:SDE_Z_ter} and on the law of $\xi$, but not on the specific realizations of $\xi$ or the Brownian motion $W$. 
However, in the sequel, we will sometimes consider strong solutions associated to specific realizations of Brownian motion and initial datum. In these cases we will reinforce the notation by denoting $Z^{a}(\xi ,  W)$ as the strong solution to \eqref{eq:SDE_Z_ter} with respect to a given initial random variable $\xi$ and a given Brownian motion $W$.

Lastly, it is clear that within our setting,  $\min_{a\in \mathbb{R}^{(n+1) K}} G(a)$
is not a convex minimization problem. 

\subsection{Some preliminaries} 
To explain the stochastic gradient descent algorithm we begin with some heuristics. Hereafter we write a generic element of $a\in\mathbb{R}^{(n+1)K}$ as
\begin{equation}
a=(a_{0,1}, \cdots, a_{0,K},\cdots,a_{n,1}, \cdots, a_{n,K} ).
\end{equation}
\begin{itemize}
\item For any $k = 0, \cdots, n,$ and $j = 1,\cdots, K$, by formally differentiating \eqref{eq:SDE_Z_ter} with respect to the element $a_{k,j}$, 
we obtain 
that the pair $(Z^a,\partial_{a_{k,j}} Z^a)$ solves the system given by \eqref{eq:SDE_Z_ter} and 
\begin{align}
\dd Y^{k,j}_t  &= g_k(t) \nabla \h_j\big( \Lc a(t) \big)      \Big( \alpha(t,  Z_t ) \dd t  +  \beta(t,  Z_t ) \dd  W_t \Big) \\
&+ \sum^d_{i=1}  
Y^{k,j,i}_t
 \h\big( \Lc a(t) \big) \Big(  \partial_{z_i} \alpha(t,   Z_t )   \dd t +  \partial_{z_i} \beta (t,  Z_t)   \dd  W_t  \Big),  \qquad Y^{k,j}_0 = 0
, \label{eq:derivative_Z}
\end{align}
with $\h$ introduced in \eqref{eq:h}. 
Note that $Y^{k,j}$ and $Z$ take values in $\Rb^d$, with components denoted by $Y^{k,j,i}$ and $Z^i$, for $i=1,\cdots,d$, respectively. 
In the sequel, we will sometimes reinforce the notation by denoting $(Z^{a}_t (\xi,W),Y^{a;k,j}_t (\xi,W))$ as the strong solution to \eqref{eq:SDE_Z_ter}-\eqref{eq:derivative_Z} with respect to a given initial random variable $\xi$ and Brownian motion $W$.

\item Suppose that we can always exchange derivatives with time-integrals and expected values (we justify this exchange of operations below). For any $a\in\Rb^{(n+1) K}$, and for any $k=0,\cdots,n$ and $j=1,\cdots, K$,  we obtain
\begin{align}
\label{eq:formal_comp_bis}
\nonumber
&\partial_{a_{k,j}}  \bigg( \int_0^T  \big| \Eb \big[ \varphi(Z^{a}_t) -  \h\big((\Lc a) (t)\big) \big] \big|^2   \dd t   \bigg) \\
&= 2 \int_0^T  \Big\langle \Eb\Big[ \varphi(Z^{a}_t) -  \h\big((\Lc a) (t)\big) \Big] ,     \Eb \Big[ \partial_{a_{k,j}} \Big(  \varphi(Z^{a}_t) -  \h\big((\Lc a) (t)\big) \Big)  \Big]     \Big\rangle  \, \dd t .
\end{align}

Let now $W$, $\tilde W$ be two independent Brownian motions and $\xi$, $\tilde \xi$ be two identically distributed independent random variables, which are also independent of $(W,\tilde W)$. Assuming weak-uniqueness for the solutions to \eqref{eq:SDE_Z_ter} 
 we have
\begin{align}
&\Big\langle \Eb\Big[ \varphi(Z^{ a}_t)- \h\big((\Lc a) (t)\big) \Big] ,     \Eb \Big[ \partial_{a_{k,j}} \Big( \varphi(Z^{a}_t) - \h\big((\Lc a) (t)\big) \Big)  \Big]     \Big\rangle \\
 &= \Big\langle \Eb\Big[ \varphi \big(Z^{a}_{t}(\xi, W)\big) - \h\big((\Lc a) (t)\big)\Big] , \Eb \Big[    \partial_{a_{k,j}}  \Big(  \varphi \big(Z^{a}_{t}(\tilde\xi,\tilde W)\big)  - \h\big((\Lc a) (t)\big)  \Big)  \Big]     \Big\rangle \\
&= \Eb\Big[ \Big\langle  \varphi \big(Z^{a}_{t}(\xi, W)\big) - \h\big((\Lc a) (t)\big)  ,    \partial_{a_{k,j}}  \Big(  \varphi \big(Z^{a}_{t}(\tilde\xi,\tilde W)\big)  - \h\big((\Lc a) (t)\big)  \Big)      \Big\rangle \Big]. 
\label{eq:formal_comp}
\end{align}
\item Therefore, combining \eqref{eq:formal_comp_bis} and \eqref{eq:formal_comp}, and since $\partial_{a_{k,j}} Z^a$ solves \eqref{eq:derivative_Z}, we formally obtain 
\begin{equation}
 \nabla_a G(a)  =  
 \Eb[        v(a;  \xi, {W} ; \tilde\xi, \tilde W)   ] + \nabla_a H(a),      
\end{equation}
with $v(a;  \xi, {W} ; \tilde\xi, \tilde W)\in\mathbb{R}^{(n+1)K}$, whose components are given by
\begin{align}
\label{eq:v_function}\nonumber
&v_{k,j}(a;  \xi, {W} ; \tilde\xi, \tilde W) \\
& : = 2 \int_0^T \Big\langle  \varphi \big(Z^{a}_{t}(\xi, W)\big) - \h\big((\Lc a) (t)\big)  ,   \nabla_x \varphi \big(Z^{a}_{t}(\tilde\xi,\tilde W)\big) Y^{a;k,j}_{t}(\tilde\xi,\tilde W)   - \partial_{a_{k,j}} \h\big( (\Lc a) (t) \big)       \Big\rangle               \dd t,
\end{align}
where $\nabla_x \varphi$ stands for the Jacobian matrix of $\varphi:\Rb^d \to \Rb^K$.
\end{itemize}  

\subsection{Algorithm}
\label{sec:theALGORITHM}
In the previous section, we presented heuristics on how certain computations can be performed in order to implement the stochastic gradient algorithm. With the help of those explanations, we can now introduce the algorithm. First, let 
\begin{itemize}
\item 
$(\eta_m)_{m\in\mathbb{N}}$ to be a deterministic sequence of positive scalars (\emph{learning rates}) such that
\begin{equation}\label{eq:RM_cond_learning}
\sum_{m=0}^{\infty} \eta_m = + \infty\quad \textrm{and} \quad \sum_{m=0}^{\infty} \eta^2_m < + \infty .
\end{equation}
\item 
$( \bar{\Omega} , \bar{\mathcal{F}} , (\bar{\mathcal{F}}_m)_{m\in\mathbb{N}}  , \bar{\mathbb{P}})$ to be a filtered probability space rich enough to support four independent samples $(W_m)_{m\in\mathbb{N}}$, $(\tilde W_m)_{m\in\mathbb{N}}$, $(\xi_{m})_{m \in\mathbb{N}}$, $(\tilde\xi_{m})_{m \in\mathbb{N}}$ such that:
\begin{itemize}
\item 
$(W_m)_{m\in\mathbb{N}}$ and $(\tilde W_m)_{m\in\mathbb{N}}$ 
are sequences of independent $q$-dimensional standard Brownian motions adapted to $(\bar{\mathcal{F}}_m)_{m\in\mathbb{N}}$. Namely, for any $m\in\mathbb{N}$, $W_m = (W_{m,t})_{t\in[0,T]}$ is a $q$-dimensional standard Brownian motion such that $\mathcal{F}^{W_m}_T \subset \bar{\mathcal{F}}_m$, and the same holds for $\tilde W_m$;
\item 
$(\xi_{m})_{m \in\mathbb{N}}$, $(\tilde\xi_{m})_{m \in\mathbb{N}}$ of $\xi$ are sequences of independent $\Rb^d$-valued random variables distributed like $\xi$.
\end{itemize}
\end{itemize}
For a deterministic ${\bf a}_0 \in \Rb^{(n+1) K},$ the estimate update is obtained iteratively as 
\begin{equation}
\label{eq_def_a_v}
{\bf a}_{m+1}  := {\bf a}_m - \eta_m {\bf v}_{m+1}, \qquad {\bf v}_{m+1} = v({\bf a}_m; \xi_{m+1}, W_{m+1};\tilde\xi_{m+1}, \tilde W_{m+1}) + \nabla H({\bf a}_{m}), \qquad m\in  \mathbb{N}_0.
\end{equation}

\begin{algorithm}[H]
\caption{SGD-MVSDE}\label{alg:SGDMVSDE}
\KwIn{Maximum number of iterations $m_{\max}$, learning rates $(\eta_m)_{m\in\mathbb{N}}$\;}
\KwData{Deterministic ${\bf a}_0 \in \Rb^{(n+1) K}$\; 
$m = 0$\;}
\While{$m \leq m_{\max}$}{
Sample $(W_{m+1}, \tilde W_{m+1}, \xi_{m+1}, \tilde\xi_{m+1})$\;
	${\bf v}_{m+1} := v({\bf a}_m; \xi_{m+1}, W_{m+1};\tilde\xi_{m+1}, \tilde W_{m+1}) + \nabla H({\bf a}_{m})$\;
	${\bf a}_{m+1}  := {\bf a}_m - \eta_m {\bf v}_{m+1}$\;
}
\end{algorithm}

The function $v$ is defined component-wise by \eqref{eq:v_function}. Note that the $\Rb^{(n+1) K}$-valued stochastic processes $({\bf a}_m)_{m\in\mathbb{N}}$ and $({\bf v}_m)_{m\in\mathbb{N}}$ are recursively defined, as in Algorithm \ref{alg:SGDMVSDE}, and are adapted to the filtration $(\bar{\mathcal{F}}_m)_{m\in\mathbb{N}}$, which then represents the history of the algorithm. 

We can also formulate a mini-batch version of the SGD-MVSDE algorithm by computing the gradient of the loss function $v$ via averaging its value over $M$ replications $\bigl(W^i_{m+1},\tilde W^i_{m+1},\xi^i_{m+1},\tilde\xi^i_{m+1}\bigr)^M_{i=1}$ of $(W_{m+1},\tilde W_{m+1},\xi_{m+1},\tilde\xi_{m+1})$ for a given iteration index $m.$ 

\begin{algorithm}[H]
\caption{minibatch-SGD-MVSDE}\label{alg:mbSGDMVSDE}
\KwIn{Maximum number of iterations $m_{\max}$;  learning rates $(\eta_m)_{m\in\mathbb{N}}$; Mini-batch sample size $M$ }
\KwData{Deterministic ${\bf a}_0 \in \Rb^{(n+1) K}$\; 
$m = 0$\;}
\While{$m \leq m_{\max}$}{
Sample $\bigl(W^i_{m+1},\tilde W^i_{m+1},\xi^i_{m+1},\tilde\xi^i_{m+1}\bigr)^M_{i=1}$\;
${\bf v}_{m+1} :=\frac{1}{M} \sum^{M}_{i=1}v({\bf a}_m; \xi^i_{m+1}, W^i_{m+1};\tilde\xi^i_{m+1}, \tilde W^i_{m+1}) + \nabla H({\bf a}_{m})$\;
${\bf a}_{m+1}  := {\bf a}_m - \eta_m {\bf v}_{m+1}$\;
}
\end{algorithm}
Note that Algorithm \ref{alg:mbSGDMVSDE} for $M=1 $ is equivalent to Algorithm \ref{alg:SGDMVSDE}.
 
{\blue
\begin{figure}
{\small
\begin{center}
    \begin{tikzpicture}[node distance=1cm] 
        \node (start) [startstop] {Start};
        \node (init) [process, below of=start, node distance=1.25cm]  {Initialize \\ $\textbf{a}_0\in \mathbb{R}^{(n+1)K}$, $m=0$ };
        \node (while) [decision, below of=init, node distance=2.5cm] {While $m \leq m_{max}$ };
        \node (grad) [process, right of=while, xshift=6.25cm, yshift=0cm] {Compute the gradient \\ $\textbf{v}_{m+1} = v(\textbf{a}_m, W_{m+1}, \tilde{W}_{m+1}) + \nabla H(\textbf{a}_m)$ };
        \node (up) [process, below of=grad, node distance=1.25cm] {Update \\ $\textbf{a}_{m+1} = \textbf{a}_m - \eta_m \textbf{v}_{m+1}$ };
        \node (incr) [process, below of=up, node distance=1.25cm] {Increment \\ $m \leftarrow m + 1$ };
        \node (out) [process, below of=
        while, node distance=2cm] {Return the output \\ $\textbf{a}_m$ };
        \node (end) [startstop, below of=out, node distance=1.25cm] {Stop};

       \draw [arrow] (start) -- (init);
        \draw [arrow] (init) -- (while);
        \draw [arrow] (while.east) -- (grad.west);
        \draw [arrow] (grad) -- (up);
        \draw [arrow] (up) -- (incr);
        \draw [arrow] ([xshift=-0.5cm]incr.west) |- ([yshift=0.5cm]while.north);
        \draw  ([xshift=-0.5cm]incr.west) -- (incr.west);
       \draw [arrow] (while) -- (out);
        \draw [arrow] (out) -- (end);
    \end{tikzpicture}
\end{center}}\caption{Flow-chart of Algorithm 1}
\label{fig.FlowChart}
\end{figure}}
\subsection{Convergence of Algorithm \ref{alg:SGDMVSDE}}
\label{sec:convergence} 

We recall that $H$ is a function in $C^1(\mathbb{R}^{(n+1) K})$ and that it satisfies \eqref{eq:Hn}. In order to obtain a convergence result for Algorithm \ref{alg:SGDMVSDE}, we introduce the following assumption.
\begin{assumption}
\label{ass:H}
The function $H$ is such that $\nabla_a H$ is globally Lipschitz continuous on $\mathbb{R}^{(n+1) K}$. 
\end{assumption}
For example, the $H$ specified in Example \ref{ex:H} satisfies Assumption \ref{ass:H}. {This requirement of Lipschitz regularity is essential to prove convergence of any iterative method with a sequential step {(see Proposition \ref{lem:regul_grad_G} below).}

Our main convergence result shows the convergence of the iterates ${\bf a}_{m}$ in Algorithm \ref{alg:SGDMVSDE} to a stationary point of $G$. 

\begin{theorem}\label{th:convergence}
{ Let Assumptions \ref{ass:initial} and \ref{ass:H} be in force. Assume also that $\alpha(t,\cdot),\beta(t,\cdot)\in C^2(\mathbb{R}^d)$, for any $t\in[0,T]$, and $\varphi\in C^2(\mathbb{R}^K)$, with derivatives bounded by some $R>0$.} The map $G$ is differentiable and
 there exists an $\Rb^{(n+1)K}$-valued random variable ${\bf a}_{\infty}$ such that, in Algorithm \ref{alg:SGDMVSDE}, we have
\begin{equation}
\lim_{m\to + \infty} {\bf a}_{m} = {\bf a}_{\infty} \quad \textrm{$\bar\Pb$-almost surely}
\quad \textrm{and
} \quad \nabla_a G({\bf a}_{\infty}) = 0.
\end{equation}
\end{theorem}

To prove Theorem \ref{th:convergence}, we first prove two intermediate results. These results enable us to employ the developments in \cite{bertsekas2000gradient} and \cite{patel2021stochastic} for the convergence of SGD algorithms in non-convex settings. 
The first preliminary result contains: (i) a standard consistency condition that allows us to understand the process $({\bf v}_{m})_{m\in\mathbb{N}}$ as a sequence of stochastic gradients of $G$; and (ii) a bound on the second conditional moment of the gradient noise. 
\begin{proposition}\label{lem:consistency_noise}
{ 
Under the assumptions of Theorem \ref{th:convergence}, we have}
\begin{equation}\label{eq:consistency_condition}
\Eb_{\bar{\Pb}}\big[ {\bf v}_{m+1} - \nabla_a G( {\bf a}_m) | \bar{\mathcal{F}}_m \big] = 0, \qquad m \in \mathbb{N}_0,
\end{equation}
where we set $\bar{\mathcal{F}}_0:=\{ \bar\Omega, \emptyset  \}$. Furthermore, 
the random variables 
\begin{equation}
\Eb_{\bar{\Pb}}\big[ | {\bf v}_{m+1} - \nabla_a G( {\bf a}_m)  |^2 \big| \bar{\mathcal{F}}_m \big]  
\end{equation}
are bounded, uniformly with respect to $m\in  \mathbb{N}_0$.
\end{proposition}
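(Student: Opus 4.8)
The plan is to reduce both claims to deterministic properties of the single-sample surrogate $v(a;\xi,W;\tilde\xi,\tilde W)$, exploiting that at step $m$ the point ${\bf a}_m$ is $\bar{\mathcal F}_m$-measurable while the fresh samples $(\xi_{m+1},W_{m+1},\tilde\xi_{m+1},\tilde W_{m+1})$ are independent of $\bar{\mathcal F}_m$. Conditioning on $\bar{\mathcal F}_m$ therefore amounts to freezing $a={\bf a}_m$ and averaging only over the new randomness, so that $\Eb_{\bar{\Pb}}[{\bf v}_{m+1}\mid\bar{\mathcal F}_m]=\big(\Eb[v(a;\xi,W;\tilde\xi,\tilde W)]+\nabla H(a)\big)\big|_{a={\bf a}_m}$. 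The consistency condition \eqref{eq:consistency_condition} is then equivalent to the deterministic identity $\nabla_a G(a)=\Eb[v(a;\xi,W;\tilde\xi,\tilde W)]+\nabla_a H(a)$, i.e. to making the formal computation of the preliminaries rigorous. Note that Assumption \ref{assum:extra_reg} with $N=2$ implies the hypotheses of Assumption \ref{ass:regularity-for-SDE-no-basis}, so the well-posedness and stability results remain available.

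For that identity I would first establish that the flow $a\mapsto Z^a$ is mean-square differentiable with $\partial_{a_{h,j}}Z^a=Y^{a;h,j}$ solving the tangent equation \eqref{eq:derivative_Z}. Under Assumption \ref{assum:extra_reg} with $N=2$ the coefficients $\alpha,\beta$ are $C^{1,2}_b$ and $\varphi\in C^4_b$, and since $\h\circ\Lc$ is smooth and bounded, the drift and diffusion of \eqref{eq:SDE_Z_ter} are Lipschitz with bounded space-derivatives; the standard flow-differentiation argument (applying the stability estimate of Lemma \ref{lem:sde_stability_general} to difference quotients) then yields $Y^{a;h,j}$ together with an $\cS^2_{[0,T]}$-bound. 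Granting this, I would differentiate $a\mapsto\Eb[\varphi(Z^a_t)]$ under the expectation by dominated convergence (the difference quotients are $L^2$-bounded and $\nabla_x\varphi$ is bounded), obtaining $\partial_{a_{h,j}}\Eb[\varphi(Z^a_t)]=\Eb[\nabla_x\varphi(Z^a_t)\,Y^{a;h,j}_t]$, and then interchange the $a$-derivative with the time integral defining $G$ (integrand and derivative being continuous and bounded on $[0,T]$). Finally the decoupling step \eqref{eq:formal_comp_bis}--\eqref{eq:formal_comp} is pure Fubini: the product of two expectations of quantities built from $Z^a$ equals a single expectation over the independent copies $(\xi,W)$ and $(\tilde\xi,\tilde W)$, which is precisely the form of $v$ in \eqref{eq:v_function}. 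This establishes \eqref{eq:consistency_condition}.

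For the second-moment bound the $\nabla H$ terms cancel, so the noise equals $v({\bf a}_m;\cdot)-\Eb[v(a;\cdot)]\big|_{a={\bf a}_m}$, and after conditioning it suffices to bound $\Eb[|v(a;\xi,W;\tilde\xi,\tilde W)|^2]$ \emph{uniformly in $a\in\Rb^{(n+1)K}$}. Here the truncation $\h$ is decisive: because $\|\h\|_{\infty}\le\|\varphi\|_{\infty}+2d_n(\bar\gamma)$ with bounded derivatives, the coefficients of \eqref{eq:SDE_Z_ter} and of \eqref{eq:derivative_Z} are bounded by constants independent of $a$. Hence the standard moment estimate gives $\sup_a\Eb[\sup_{t}|Z^a_t|^2]<\infty$, and since \eqref{eq:derivative_Z} is linear in $Y$ with $a$-uniformly bounded coefficients and bounded forcing term $g_h(t)\nabla\h_j(\Lc a(t))\big(\alpha(t,Z_t)\,\dd t+\beta(t,Z_t)\,\dd W_t\big)$, a Gronwall argument yields $\sup_a\Eb[\sup_{t}|Y^{a;h,j}_t|^2]<\infty$. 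Bounding the inner product in \eqref{eq:v_function} by Cauchy--Schwarz using $\|\varphi\|_{\infty}$, $\|\nabla_x\varphi\|_{\infty}$, $\|\h\|_{\infty}$ and integrating over $[0,T]$ then gives $\sup_a\Eb[|v(a;\cdot)|^2]\le C$, so the conditional second moment of the centered noise is bounded by $4C$ uniformly in $m$.

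The main obstacle is the first half of the middle paragraph: rigorously promoting the formal differentiation to an honest identity, i.e. proving that $Y^{a;h,j}$ genuinely is the mean-square $a_{h,j}$-derivative of $Z^a$ and that the limit passes through both the expectation and the time integral. Once the tangent-process existence, its uniform $\cS^2_{[0,T]}$-bounds, and its Lipschitz dependence on $a$ are in hand — the technical inputs the paper advertises — the remaining dominated-convergence and Fubini steps are routine, and the uniform moment bound follows directly from the boundedness engineered through $\h$.
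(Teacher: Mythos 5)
Your proposal is correct and follows the same overall architecture as the paper's proof: reduce \eqref{eq:consistency_condition} to the deterministic identity $\nabla_a G(a)=\Eb[v(a;\xi,W;\tilde\xi,\tilde W)]+\nabla_a H(a)$ by freezing $a={\bf a}_m$ under the conditioning, justify the differentiation of $a\mapsto\Eb[\varphi(Z^a_t)]$ through the tangent process, decouple the product of expectations via independent copies and Fubini, and then bound $\Eb[|v(a;\cdot)|^2]$ uniformly in $a$ using the truncation $\h$ together with the uniform $\cS^2_{[0,T]}$ estimates on $(Z^a,Y^{a;h,j})$ (the paper's Lemma \ref{lem:estimates_z_Y}). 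The one genuine methodological difference is in the key differentiability step: the paper (in Lemma \ref{Lemma:4.15.Diff}) invokes Kunita's stochastic-flow theorem to get the almost-sure pathwise derivative $\partial_a Z^a_t=Y^a_t$ and then recovers $\partial_a\Eb[\varphi(Z^a_t)]=\Eb[\nabla_x\varphi(Z^a_t)Y^a_t]$ via the fundamental theorem of calculus, Fubini, and continuity of $a\mapsto\Eb[\nabla_x\varphi(Z^a_t)Y^a_t]$; you instead propose mean-square differentiability of the flow via difference quotients controlled by Lemma \ref{lem:sde_stability_general}, followed by uniform integrability and dominated convergence. Both routes work; the paper's leans on a citable off-the-shelf theorem and avoids proving $L^2$-differentiability by hand, while yours is more self-contained but would require you to actually carry out the Gronwall estimate showing the difference quotient converges to $Y^{a;h,j}$, which is the step you correctly flag as the main technical burden. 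A small point in your favour: your observation that the $\nabla H$ terms cancel exactly in ${\bf v}_{m+1}-\nabla_a G({\bf a}_m)$ makes it transparent that no regularity of $H$ beyond differentiability is needed for the second-moment bound, which is consistent with Assumption \ref{ass:H} not appearing among the hypotheses of the proposition, whereas the paper's proof invokes Assumption \ref{ass:H} at that point somewhat superfluously.
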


The next proposition is a regularity result for $G$ and its gradient.
\begin{proposition}[Regularity of $G$ and $\nabla_a G$]\label{lem:regul_grad_G}
{ 
Under the assumptions of Theorem \ref{th:convergence}, } 
the function $G$ is continuously differentiable and its gradient $\nabla_a G$ is globally Lipschitz continuous on $\mathbb{R}^{(n+1) K}$. 
\end{proposition}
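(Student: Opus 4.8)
The plan is to start from the explicit gradient formula $\nabla_a G(a) = \Eb[v(a;\xi,W;\tilde\xi,\tilde W)] + \nabla_a H(a)$ suggested by the heuristic computation above, make it rigorous so as to obtain the $C^1$ regularity of $G$, and then prove that each of the two summands is globally Lipschitz.

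First I would justify the formal differentiation. The map $a\mapsto G(a)$ depends on $a$ explicitly through $\h(\Lc a(t))$ and implicitly through the law of $Z^a$ solving \eqref{eq:SDE_Z_ter}. Since $\h$ is smooth with bounded derivatives and $\Lc$ is linear and bounded, and since Assumption \ref{assum:extra_reg} with $N=2$ provides $\alpha,\beta\in C_b^{1,2}$ and $\varphi\in C_b^{4}$, the linear SDE \eqref{eq:derivative_Z} for the tangent process $Y^{a;h,j}$ is well-posed and represents $\partial_{a_{h,j}}Z^a$. The exchange of $\partial_{a_{h,j}}$ with $\int_0^T$ and $\Eb$ is then legitimised by dominated convergence, on the basis of (i) uniform-in-$a$ moment bounds for $Z^a$ and $Y^{a;h,j}$ (auxiliary SDE moment estimates, of the same nature as Proposition \ref{prop:parameterestimates}), and (ii) the boundedness of $\varphi,\nabla_x\varphi,\h,\nabla\h$. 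This yields $\nabla_a G$ in the stated form, and its continuity follows from the same bounds together with the continuity of $a\mapsto(Z^a,Y^{a;h,j})$ in $\cS^2_{[0,T]}$.

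For the Lipschitz property, the summand $\nabla_a H$ is globally Lipschitz by Assumption \ref{ass:H}, so it remains to bound $|\Eb[v(a)] - \Eb[v(a')]|$ by $C|a-a'|$ with $C$ independent of $a,a'$. Writing $A_t(a):=\varphi(Z^a_t(\xi,W))-\h(\Lc a(t))$ and $B^{h,j}_t(a):=\nabla_x\varphi(Z^a_t(\tilde\xi,\tilde W))\,Y^{a;h,j}_t(\tilde\xi,\tilde W)-\partial_{a_{h,j}}\h(\Lc a(t))$, each component reads $v_{h,j}(a)=2\int_0^T\langle A_t(a),B^{h,j}_t(a)\rangle\,\dd t$, and I would use the telescoping identity
\[
\langle A_t(a),B^{h,j}_t(a)\rangle-\langle A_t(a'),B^{h,j}_t(a')\rangle
=\langle A_t(a)-A_t(a'),B^{h,j}_t(a)\rangle+\langle A_t(a'),B^{h,j}_t(a)-B^{h,j}_t(a')\rangle.
\]
After taking expectations and applying Cauchy--Schwarz, the four resulting factors are controlled as follows: $A_t(a)$ is bounded by $2\|\varphi\|_\infty$; $B^{h,j}_t(a)$ is bounded in $L^2$ uniformly in $a$ (moment estimate for $Y$ together with boundedness of $\nabla_x\varphi,\nabla\h$); the increment of $A$ is $O(|a-a'|)$ by the Lipschitz continuity of $\varphi$ and $\h$ and the stability bound $\|Z^a-Z^{a'}\|_{\cS^2_{[0,T]}}\le C|a-a'|$, which follows from Proposition \ref{prop:parameterestimates} applied with $\gamma=\h\circ\Lc a$ since $a\mapsto\h\circ\Lc a$ is Lipschitz in sup-norm (as $\Lc$ is bounded and $\h$ Lipschitz); and the increment of $B$ is $O(|a-a'|)$ by the Lipschitz stability of the tangent process $\|Y^{a;h,j}-Y^{a';h,j}\|_{\cS^2_{[0,T]}}\le C|a-a'|$, combined with the Lipschitz continuity of $\nabla_x\varphi$ and $\nabla\h$ and the stability of $Z^a$. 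Integrating in $t$ and summing over $(h,j)$ then gives the desired global Lipschitz bound, the constant being uniform because all nonlinearities are evaluated through the bounded cutoff $\h$ and the globally bounded coefficients.

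The main obstacle is the Lipschitz-in-$a$ stability of the tangent process $Y^{a;h,j}$. Its equation \eqref{eq:derivative_Z} depends on $a$ both explicitly, through $g_h$, $\nabla\h(\Lc a)$ and $\h(\Lc a)$, and implicitly through $Z^a$ appearing in $\alpha,\beta,\partial_{z_i}\alpha,\partial_{z_i}\beta$. To estimate $Y^{a;h,j}-Y^{a';h,j}$ I would subtract the two linear SDEs, separate the differences of coefficients into an explicit part (handled by Lipschitz continuity of $\h,\nabla\h$ and boundedness of $g_h$) and a part carried by $Z^a-Z^{a'}$ (handled by the Lipschitz continuity of $\partial_{z_i}\alpha,\partial_{z_i}\beta$, i.e. the $C^2$ space-regularity furnished precisely by $N=2$), and then close the estimate with the Burkholder--Davis--Gundy inequality and Gr\"onwall's lemma, after first invoking the uniform $L^2$ moment bound on $Y^{a;h,j}$ to absorb the terms where the increment sits on a multiplicative factor. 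This coupled explicit/implicit dependence, and the need for second-order spatial regularity of $\alpha,\beta$, is where the assumption $N=2$ is used in full.
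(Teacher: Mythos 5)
Your proposal is correct and follows essentially the same route as the paper: the explicit gradient formula via the tangent process, the telescoping decomposition $\langle A,B\rangle-\langle A',B'\rangle=\langle A-A',B\rangle+\langle A',B-B'\rangle$ (the paper's $I_1+I_2+I_3$ split in the proof of Proposition \ref{lem:regul_grad_G}), and the reduction of everything to uniform moment bounds and Lipschitz-in-$a$ stability of $(Z^a,Y^{a;h,j})$ as in Lemma \ref{lem:estimates_z_Y}, with Assumption \ref{ass:H} handling the $H$ term. The only cosmetic difference is that you propose to prove the tangent-process stability directly by Burkholder--Davis--Gundy and Gr\"onwall, whereas the paper recasts \eqref{eq:derivative_Z} into the generic form \eqref{eq:sde_appendix_krylov} and invokes the Krylov-based stability estimates of Lemma \ref{lem:sde_stability_general}; both yield the same bound.
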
 

We are now in the position to prove our main result in Theorem \ref{th:convergence}.
\begin{proof}[Proof of Theorem \ref{th:convergence}] 
Differentiability of $G$ follows from Proposition \ref{lem:regul_grad_G}. 
By Proposition \ref{lem:consistency_noise} and Proposition \ref{lem:regul_grad_G}, the hypothesis of \cite[Proposition 3]{bertsekas2000gradient} and \cite[Theorem 1]{patel2021stochastic} are fulfilled.  In particular, Theorem 1 in \cite{patel2021stochastic} (see the remark on p.5 there) implies that $\bar\Pb$-almost surely either ${{\bf a}_m}$ converges in $\mathbb{R}^{(n+1) K}$ or $|{{\bf a}_m}|\to +\infty$. On the other hand, by Proposition 3 in \cite{bertsekas2000gradient}, either $G({\bf a}_m)\to -\infty$ or $G({\bf a}_m)$ converges to a finite value and $\nabla G({\bf a}_m)\to 0$, $\bar\Pb$-almost surely. Since 
\begin{equation}
G\geq 0 \quad\textrm{and}\quad \lim_{|a|\to+\infty} G(a) = +\infty,
\end{equation}
we have that ${\bf a}_m$ converges $\bar\Pb$-almost surely to a stationary point of $G$. 
\end{proof}

{ \begin{remark} \label{remark:OnConvergenceAnalyis}
We claim that the convergence analysis of this section could be derived assuming $\alpha(t,\cdot)$, $\beta(t,\cdot)$ and $\varphi$ with sub-linear growth (instead of bounded). Indeed, Propositions 3.2 and 3.3 (on which Theorem \ref{th:convergence} relies) would be proved with similar, though slightly more involved, arguments. In particular, the estimates of Lemmas \ref{lem:estimates_z_Y} and \ref{Lemma:4.15.Diff} for the solutions to the system \eqref{eq:SDE_Z_ter}-\eqref{eq:derivative_Z}, which are the crux of the argument, should extend to the case of unbounded $\alpha(t,\cdot),\beta(t,\cdot), \varphi$ while keeping the benign boundedness assumption for their derivatives. Finally, we believe the assumptions on $\varphi$ could be further relaxed by allowing for polynomial growth, provided additional integrability conditions are assumed on the initial datum.
\end{remark}}

{\begin{remark}\label{remark:worseFunctions} 
The Lipschitz regularity of the derivatives of $\alpha(t,\cdot),\beta(t,\cdot)$ are needed to employ standard stability estimates (see Lemma A.1) to the system \eqref{eq:SDE_Z_ter}-\eqref{eq:derivative_Z}. While it is true that dropping this assumption would allow to include popular models with H\"older-continuous coefficients (e.g. square-root diffusions), it would also result in the tangent process being the solution of an SDE with singular coefficients. This would require the development of new fundamental stability estimates on the tangent SDE \eqref{eq:derivative_Z}. Therefore, this direction is left for future research.
\end{remark}}


\section[Numerical study]{Numerical study}
\label{sec:NumericalStudy}
We conduct several numerical experiments to study the numerical performance of {Algorithm \ref{alg:mbSGDMVSDE}} for different values of the mini-batch sample size $M$\footnote{The code is available at \url{https://github.com/amatoandre/Numerical-approximation-of-MKV-SDEs-via-SGD.git}}. Precisely, we test it for the following models:
\begin{enumerate}
\item The Kuramoto-Shinomoto-Sakaguchi model (Section \ref{sec:kuramoto}) which is in the form of \eqref{eq:new_MKV} with a low value of $K=3$. The goal here is to test the accuracy and efficiency of the scheme in a case where the dependence on the state is truly separated from the one on the measure. 
\item A model with a quadratic drift (Section \ref{sec:quadratic}), which is outside the scope of our Assumption \ref{ass:regularity-for-SDE-no-basis}.
Here we show, numerically, that our SGD algorithm efficiently converges, at least for small times, even if the coefficients are super-linear in the measure variable.  
\item An MV-SDE with a Gaussian convolution kernel (Section \ref{sec:convol}). The SDE has no separable coefficients, but it can be approximated by dynamics in the form of \eqref{eq:new_MKV} with a sufficiently large $K$ by employing a projection technique based on the generalized Fourier series (see \cite{belomestny2018projected}). The goal is to show that our {algorithm} can be successfully employed, in combination with the aforementioned projection technique, to obtain good approximations of the law of the original SDE.  
\end{enumerate}

The following choices are common to all our numerical experiments:
\begin{itemize}
\item The finite dimensional space $S_n$ for the domain projection of Section \ref{sec:ProjOnFiniteDimSpace} is that of the $n$-dimensional polynomials.  
In particular, for a given $n\in\mathbb{N}$ and $T >0$, the basis $g_0,\cdots,g_n$ is given by the Lagrange polynomials centred at the Chebyshev nodes {$t_0,\dots,t_n$ defined in \eqref{eq:orthog poly basis}.}  
Therefore, recalling the lifting operator $\Lc$ defined in \eqref{eq:lifting_operator}, for any $a\in \mathbb{R}^{(n+1)\times K}$ we have 
\begin{equation}
(\Lc a) (t_j)  = a_j, \qquad j=0,\cdots, n.
\end{equation} 
\item Inspired from the suggestion of learning rate in \cite{fehrman2020convergence}, the sequence of the learning rates $(\eta_m)_{m\in\mathbb{N}}$ is set as
\begin{equation}
\eta_m = r_0/(m+1)^{\rho}, \qquad r_0 > 0, \quad \rho \in (0.5,1],
\end{equation}
in compliance with the conditions \eqref{eq:RM_cond_learning}.   
\item The functions $\h$ and $H$ appearing in \eqref{eq:min_Sn} are chosen as the identity and null functions, respectively. Although this choice does not meet the assumptions of the convergence results of Section \ref{sec:convergence}, it does simplify the implementation. Also, we observe that the algorithm still converges in our experiments.
\item The initial iterate ${\bf a}_0$ (algorithm initialization) is always taken as
\begin{equation}
{\bf a}_0 = \big(\varphi(\tilde X_0), \cdots, \varphi(\tilde X_0)\big),
\end{equation}
with $\tilde X_0$ being a random realization of $X_0$, which yields $\Lc {\bf a}_0 (t) \equiv \varphi(\tilde X_0)$. In particular, when $\mu_{X_0} = \delta_x$, then ${\bf a}_0 = (\varphi(x),\cdots,\varphi(x))$ is deterministic.
\item The benchmark value for the curve $\bar\gamma_\cdot = \Eb[\varphi(X_{\cdot})]$ is given by 
\begin{equation}
\label{eq:benchmark_gamma}
\bar\gamma^{\text{MC}}_\cdot = \frac{1}{N_0} \sum_{i=1}^{N_0} \varphi(X^{N_{0},i}_{\cdot}),
\end{equation}
where $X^{N_0}$ is the $N_0$-dimensional mean-field particle system associated to \eqref{eq:new_MKV}. The relative error after $m$ iterations of our SGD algorithm is  

\begin{equation}
\label{eq:varepsilon Error norm}
\varepsilon_m 
:= 
\frac{ \| \Lc {\bf{a}}_{m} - {\bar\gamma}^{\text{MC}}\|_{L^2([0,T])}}{\|{\bar\gamma}^{\text{MC}} \|_{L^2([0,T])}}, 
\end{equation} 
with $\mathbf{a}_{m}$ being the $m$-th iterate of Algorithm \ref{alg:mbSGDMVSDE} and the $L^2({[0,T]})$-norm  
over the interval $[0,T]$ is as defined in Table \ref{tab:notation}.  
The algorithm is always halted when the error $\varepsilon_m$ as computed above is below 1\%.  

\item \label{aux:RandomPageLabel-page14-LastBulletpoint3} Algorithm \ref{alg:mbSGDMVSDE} and the mean-field particle system associated with \eqref{eq:new_MKV} make use of a time-stepping scheme to approximate the solution of the involved SDEs. In particular, the system \eqref{eq:SDE_Z_ter}-\eqref{eq:derivative_Z} is discretized by employing the Euler-Maruyama scheme with time-step $h=10^{-2}$. {Note that this further approximation is absolutely standard under the assumption of $\alpha(t,\cdot),\beta(t,\cdot)$ differentiable with bounded derivatives. The discretized mean-field particle system is written as
\begin{equation}\label{eq:mean field sys}
X^{N_0,i}_t = X^{N_0,i}_{t_k} +\Bigl( \frac{1}{N_0} \sum_{i=1}^{N_{0}} \varphi(X^{N_{0},i}_{\eta(t)})\Bigr) \cdot \Bigl( \alpha(\eta(t),X^{N_{0},i}_{\eta(t)}) (t - t_k) +\beta(\eta(t),X^{N_{0},i}_{\eta(t)}) \bigl(W^i_t - W^i_{t_k} \bigr) \Bigr),
\end{equation}
with $\eta(t):=t_k$ if $t \in [t_k, t_{k+1}),$ and $t_{k+1}-t_k = h$. Here $\{t_k\}_{k}$ is a partition of $[0,T]$. Due to interactions between discretized diffusions, implementation of the above particle system will require $O(N_{0}^2K h^{-1})$ arithmetic operations. On the other hand, in Algorithm \ref{alg:mbSGDMVSDE}, one sample of the mini-batch will involve $O(Kdqh^{-1}), O(nK^2d^2qh^{-1})$ and $O(nK^2dh^{-1})$ number of arithmetic operations for implementing discretized \eqref{eq:SDE_Z_ter}, \eqref{eq:derivative_Z}, and computing entries of $v$ given in \eqref{eq:v_function}, respectively. Thus, the total number of arithmetic operations in one iteration of Algorithm \ref{alg:mbSGDMVSDE} is $O(MnK^2d^2qh^{-1})$.}
\item Computational Setup: All numerical experiments are conducted on the computing server of the Department of Mathematics, University of Bologna, featuring a 12th Gen Intel Core i9-12900K CPU, 125 GB RAM, and running Ubuntu 24.04.1 LTS. The computations were implemented in Python 3.11.11 using NumPy.
\end{itemize}

\subsection{Kuramoto-Shinomoto-Sakaguchi MV-SDE}\label{sec:kuramoto}
Consider the following MV-SDE, which is related to the famous Kuramoto-Shinomoto-Sakaguchi model (see \cite[Section 5.3.2]{frank2005nonlinear}):
\begin{equation*}
 dX_t =  \bigl(\mathbb{E}[\sin(X_t)] \cos(X_t) - \mathbb{E}[\cos(X_t)] \sin(X_t) \bigr) \textrm{d} t + \sigma \, \textrm{d} W_t, \quad X_0 = x_0.
\end{equation*}
The above MV-SDE can be seen in the form of  \eqref{eq:new_MKV} with $K=3$, $d=1$, $q=1$ and 
\begin{align}
\varphi(x) = \bigl( \sin(x),  \cos(x), 1\bigr), 
&&\alpha(t,x) = ( \cos(x), -\sin(x), 0)^\top, 
&& \beta (t,x) =( 0, 0, \sigma )^{\top}.
\end{align}
Here, $\bar\gamma (t) = \Eb[\varphi(X_t)]$ takes values on $\Rb^3$. However, as $\bar{\gamma}_3(t)\equiv 1$, we run the SGD algorithm with $K=2$ to approximate $\bar\gamma_1(t)=\Eb[\sin(X_t)]$ and $\bar\gamma_2(t) =  \Eb[\cos(X_t)]$, which are the real unknown functions. Note that $\varphi, \alpha$ and $\beta$ in this example satisfy the assumptions required in the convergence result of Section \ref{sec:convergence}.  

We test different choices of algorithm hyperparameters: $r_0 = 1, 5, 10$, $\rho = 0.6, 0.7, 0.8, 0.9$, $n=3,4,5,6{, 7, 8}$ and $M= 1, 10, 100, 1000, 10000$. The benchmark ${\bar\gamma}^{\text{MC}}$ in \eqref{eq:benchmark_gamma}, based on the Monte Carlo particle system approximation, is obtained with $N_{0}=10^6$ particles. For each combination of hyperparameters, we perform { 1000} independent runs of the algorithm, and record the average number of iterations of the SGD algorithm required to achieve the relative error accuracy $\varepsilon_m<1\%$. 

In Tables \ref{tab:kuramoto1}, \ref{tab:kuramoto2}, and \ref{tab:kuramoto3} we report the lowest average number of iterations obtained over different $(r_0,\rho)$ combinations for a given $M$ and $n$  for time horizons $T=0.5, 1, 2,$ respectively. 
We observe that the average number of iterations 
decreases with the size of the mini-batch sample $M$, and increases with the time-horizon $T$.  
By taking into account the computational time, 
it is clear that $M=1000$ gives the best performance. It is evident that our \emph{minibatch-SGD} algorithm converges very quickly to the Monte Carlo particle system based benchmark.

In Figures \ref{fig:kuramoto1}, \ref{fig:kuramoto2}, \ref{fig:kuramoto3} we plot the output curves $\Lc {\bf a}_m=(\Lc {\bf a}_m^1, \Lc {\bf a}_m^2)$ of the SGD algorithm versus the benchmark curves ${\bar\gamma}^{\text{MC}}=({\bar\gamma}^{\text{MC}}_1 , {\bar\gamma}^{\text{MC}}_2),$ { for all values of the polynomial degree $n=3, 4, ..., 8$ and} for time horizons $T=0.5, 1, 2,$ respectively. 
Note that the approximation does not deteriorate as the time horizon $T $ increases. In particular, the slight loss of precision that can be observed towards the right end of the time interval can be explained by the choice of polynomial approximations. For instance, the gap between $(\Lc {\bf a}_m)^1$ and ${\bar\gamma}^{\text{MC}}_1$, which can be seen in Figure \ref{fig:kuramoto1} in the region $t\in[0.4,0.5]$, does not appear in the analogous plot of Figure \ref{fig:kuramoto2} (or Figure \ref{fig:kuramoto3}) after the same number of steps. 
This shows that the error does not propagate over time: this is mainly due to the fact that ours is a full history-type minimization algorithm, in that it approximates the minimum in \eqref{eq:minimization}  over the set of the continuous curves on $[0,T]$. 

{ Finally observe that the algorithm works best for low values of the polynomial order, namely $n=3,4$. Indeed, typically the number of iterations needed to reach the target relative error accuracy $\varepsilon_m<1\%$ (cf. Tables \ref{tab:kuramoto1}, \ref{tab:kuramoto2}, \ref{tab:kuramoto3}) increase as higher values of $n$ are considered. For the best values of the mini-batch size, namely $M=100,1000$, this phenomenon seems less evident. However, we can see from Figures \ref{fig:kuramoto1}, \ref{fig:kuramoto2}, \ref{fig:kuramoto3} that the quality of the approximation deteriorates when the number of iterations does not increase with $n$. This can be interpreted as an overfitting phenomenon, which is expected as the benchmark curves ${\bar\gamma}^{\text{MC}}=({\bar\gamma}^{\text{MC}}_1 , {\bar\gamma}^{\text{MC}}_2)$ are relatively flat.} 
\begin{table}[htpb]
\centering
{
\begin{tabular}{|c|lllll|}
\hline
$ $ & $M = 1$ & $M = 10$ & $M= 100$ & $M= 1000$ & $M= 10000$\\
\hline
$n= 3$ & 249.6 (0.23) & 40.4 (0.05) & 8.3 (0.01) & 2.6 (0.02) & 2 (0.17) \\
$n= 4$ & 368 (0.34) & 48 (0.06) & 8.2 (0.01) & 2.6 (0.02) & 2 (0.19) \\
$n= 5$ & 382.4 (0.36) & 59.6 (0.06) & 8.3 (0.01) & 2.4 (0.02) & 1.1 (0.14)\\
$n= 6$ & 405 (0.38) & 53.8 (0.06) & 8.3 (0.02) & 2.4 (0.02) & 1.1 (0.15)\\
$n= 7$ & 447.3 (0.42) & 53.9 (0.06) & 8.7 (0.02) & 2.7 (0.02) & 1.5 (0.22)\\
$n= 8$ & 451 (0.42) & 55.2 (0.07) & 10.3 (0.02) & 2.7 (0.03) & 2 (0.33)\\
\hline
\end{tabular} 
}
\caption{{ Kuramoto-Shinomoto-Sakaguchi MV-SDE.} Average number of iterations $m$ (execution time in seconds) over { 1000} independent runs of the algorithm to achieve relative error accuracy $\varepsilon_m<1\%$ with the best combination of $(r_0,\rho)$ {for each pair $(n,M)$}. Here $T=0.5$, $x_0=0.5$, $\sigma=0.5$ { and the Monte Carlo benchmark exhibited an execution time of 1.98 seconds.}}
\label{tab:kuramoto1}
\end{table} 

\begin{figure}[H]
\centering
\includegraphics[width=0.9\textwidth]{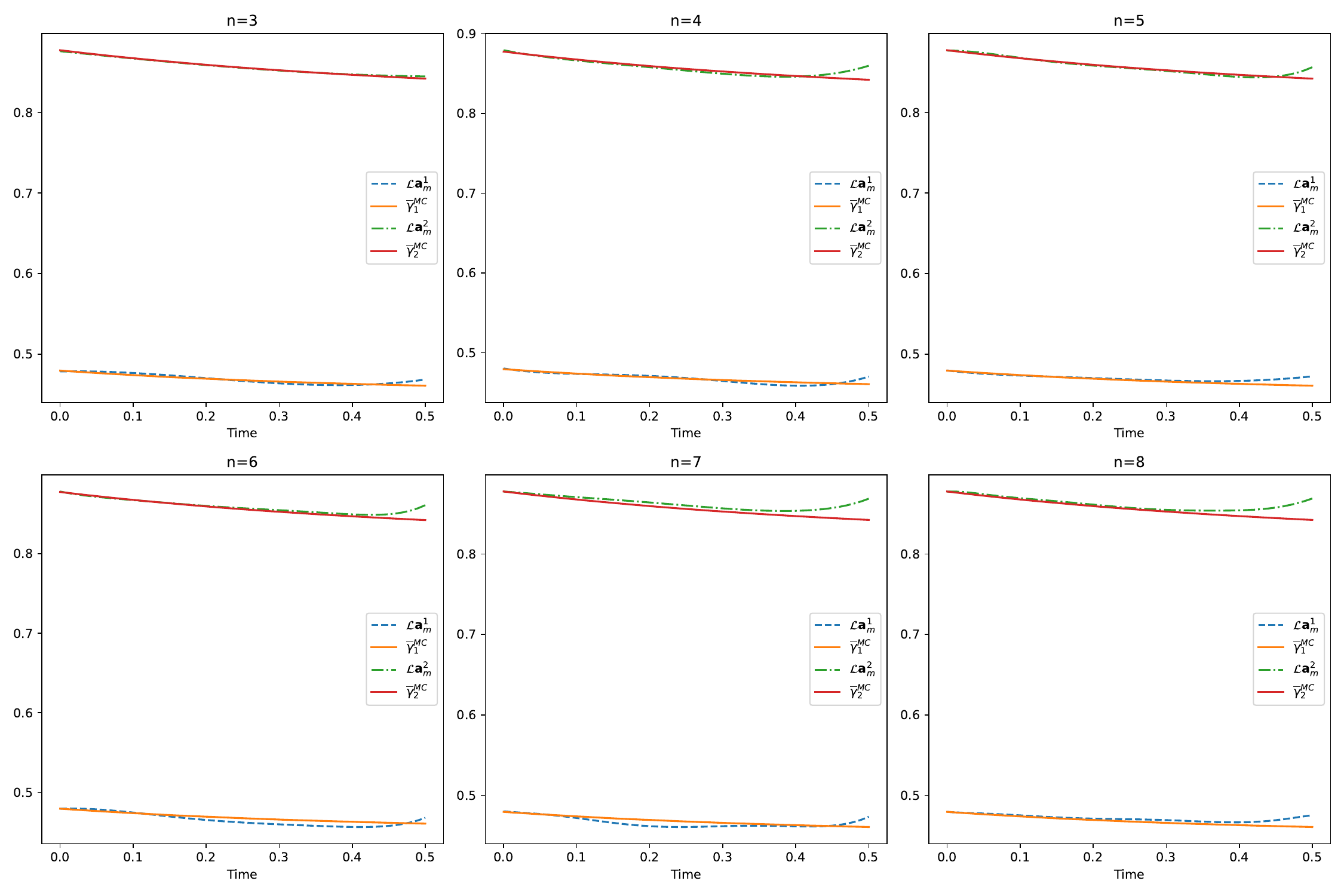}
\caption{{ Kuramoto-Shinomoto-Sakaguchi MV-SDE. Comparison of the output curves $\Lc {\bf a}_m=(\Lc {\bf a}_m^1, \Lc {\bf a}_m^2)$ of the SGD algorithm versus the benchmark curves ${\bar\gamma}^{\text{MC}}=({\bar\gamma}^{\text{MC}}_1 , {\bar\gamma}^{\text{MC}}_2)$ for all values of $n$, for timestep size $h=10^{-2}$, $T=0.5$, $M=1000$, $x_0=0.5$ and $\sigma=0.5$.}}
\label{fig:kuramoto1}
\end{figure}
 
\begin{table}[htpb]
\centering
{
\begin{tabular}{|c|lllll|}
\hline
$ $ & $M = 1$ & $M = 10$ & $M= 100$ & $M= 1000$ & $M= 10000$\\
\hline
$n= 3$ & 444.1 (0.80) & 77.5 (0.18) & 16.6 (0.06) & 5.1 (0.08) & 4 (0.65) \\
$n= 4$ & 747 (1.35) & 101.3 (0.23) & 16.1 (0.06) & 4.4 (0.07) & 3 (0.62)\\
$n= 5$ & 709.9 (1.31) & 105.8 (0.24) & 16.9 (0.07) & 4.5 (0.08) & 3 (0.71) \\
$n= 6$ & 804.3 (1.48) & 109.9 (0.25) & 17.8 (0.06) & 4.1 (0.08) & 2.1 (0.57) \\
$n= 7$ & 899.9 (1.67) & 110.9 (0.27) & 16.5 (0.06) & 4 (0.08) & 2 (0.62) \\
$n= 8$ & 923.4 (1.70) & 118.4 (0.28) & 17.9 (0.07) & 3.9 (0.09) & 2 (0.67)\\
\hline
\end{tabular} 
}
\caption{{ Kuramoto-Shinomoto-Sakaguchi MV-SDE.} Average number of iterations $m$ (execution time in seconds) over { 1000} independent runs of the algorithm to achieve relative error accuracy $\varepsilon_m<1\%$ with the best combination of $(r_0,\rho)$ {for each pair $(n,M)$}. Here $T=1$, $x_0=0.5$, $\sigma=0.5$ { and the Monte Carlo benchmark exhibited an execution time of 3.60 seconds.} }
\label{tab:kuramoto2}
\end{table} 

\begin{figure}[htpb]
\centering
\includegraphics[width=0.9\textwidth]{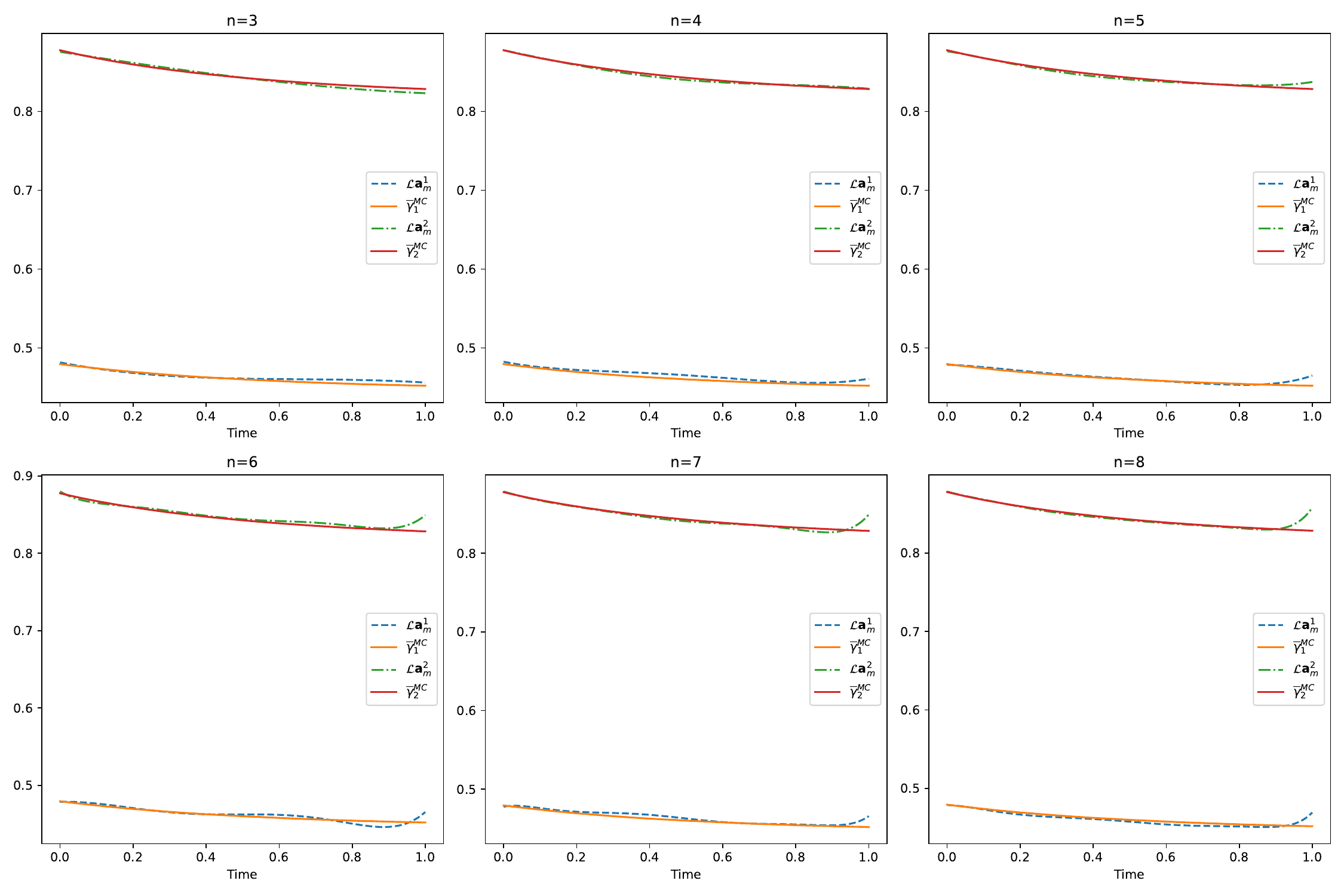}
\caption{{ Kuramoto-Shinomoto-Sakaguchi MV-SDE.} 
{ Comparison of the output curves $\Lc {\bf a}_m=(\Lc {\bf a}_m^1, \Lc {\bf a}_m^2)$ of the SGD algorithm versus the benchmark curves ${\bar\gamma}^{\text{MC}}=({\bar\gamma}^{\text{MC}}_1 , {\bar\gamma}^{\text{MC}}_2)$ for all values of $n$, for timestep size $h=10^{-2}$, $T=1$, $M=1000$, $x_0=0.5$ and $\sigma=0.5$.}}
\label{fig:kuramoto2}
\end{figure}
 
\begin{table}[htpb]
\centering
{
\begin{tabular}{|c|lllll|}
\hline
$ $ & $M = 1$ & $M = 10$ & $M= 100$ & $M= 1000$ & $M= 10000$\\
\hline
$n= 3$ & 992.1 (3.53) & 134 (0.61) & 21.4 (0.14) & 4.7 (0.15) & 2.3 (0.86) \\
$n= 4$ & 1302.7 (4.67) & 153.8 (0.70) & 25.4 (0.18) & 6 (0.21) & 3.1 (1.33)\\
$n= 5$ & 1576.5 (5.66) & 191.4 (0.87) & 28.6 (0.20) & 6.8 (0.26) & 4.1 (1.99) \\
$n= 6$ & 1730.9 (6.22) & 244.5 (1.12) & 34.9 (0.25) & 7.8 (0.33) & 5 (2.71)\\
$n= 7$ & 1950.3 (7.02) & 220.3 (1.01) & 36.1 (0.27) & 7.7 (0.35) & 4 (2.43) \\
$n= 8$ & 2033.8 (7.32) & 238.1 (1.09) & 34.5 (0.26) & 7.6 (0.37) & 4 (2.68) \\
\hline
\end{tabular} }
\caption{{ Kuramoto-Shinomoto-Sakaguchi MV-SDE.} Average number of iterations $m$ (execution time in seconds) over { 1000} independent runs of the algorithm to achieve relative error accuracy $\varepsilon_m<1\%$ with the best combination of $(r_0,\rho)$ {for each pair $(n,M)$}. Here $T=2$, $x_0=0.5$, $\sigma=0.5$ { and the Monte Carlo benchmark exhibited an execution time of 6.59 seconds.}}
\label{tab:kuramoto3}
\end{table} 

\begin{figure}[htpb]
\centering
\includegraphics[width=0.9\textwidth]{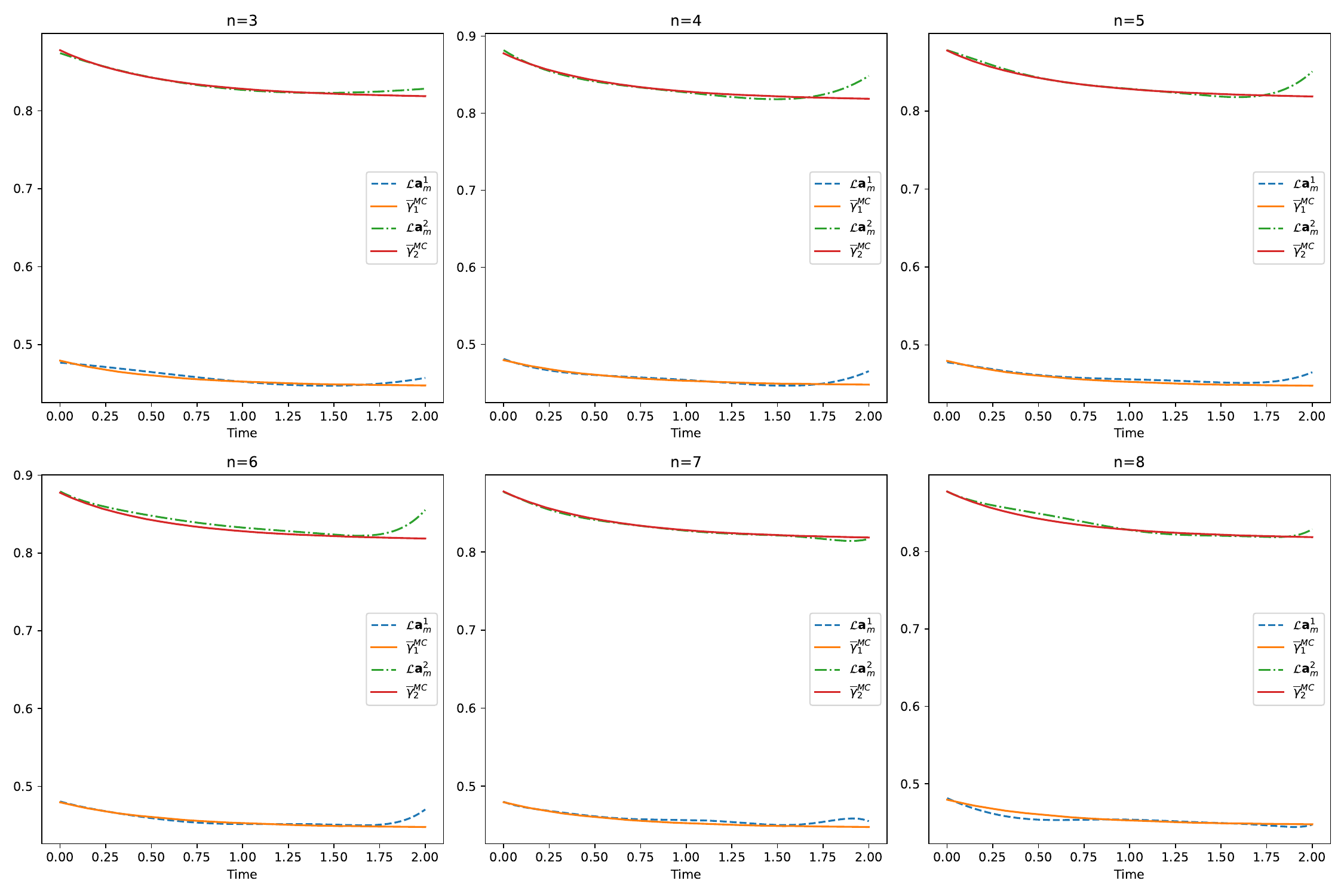}
\caption{{ Kuramoto-Shinomoto-Sakaguchi MV-SDE.} 
{ Comparison of the output curves $\Lc {\bf a}_m=(\Lc {\bf a}_m^1, \Lc {\bf a}_m^2)$ of the SGD algorithm versus the benchmark curves ${\bar\gamma}^{\text{MC}}=({\bar\gamma}^{\text{MC}}_1 , {\bar\gamma}^{\text{MC}}_2)$ for all values of $n$, for timestep size $h=10^{-2}$, $T=2$, $M=1000$, $x_0=0.5$ and $\sigma=0.5$.}}
\label{fig:kuramoto3}
\end{figure}

\subsection{Polynomial drift MV-SDE}
\label{sec:quadratic}
Consider the MV-SDE with polynomial drift given as follows:
\begin{equation}\label{eq:mkv_quadratic}
dX_t = \left( \mathbb{E}[X_t] - X_t \mathbb{E}[X_t^2] +  \delta X_t  \right) dt + X_t dW_t , \ \ \ X_0=x_0.
\end{equation} 
The latter can be cast in the form of  \eqref{eq:new_MKV} with $K=3$, $d=1$, $q=1$ and 
\begin{align}
\varphi(x)=(x, x^2, 1), && \alpha(t,x)=(1, -x, \delta x)^T,  && \beta(t,x)=(0 , 0, x)^T.
\end{align}
Once more, the function $\bar\gamma (t) = \Eb[\varphi(X_t)]$ takes values on $\Rb^3$ but $\bar{\gamma}_3(t)\equiv 1$. Thus we run the SGD algorithm with $K=2$ to approximate $\bar\gamma_1(t)=\Eb[(X_t)^2]$ and $\bar\gamma_2(t) = \Eb[X_t]$, which are the real unknown functions. Also note that $\varphi, \alpha$ and $\beta$ in this example are unbounded and thus do not satisfy the assumptions in the results of Section \ref{sec:convergence}. In particular, the drift coefficient has super-linear growth in the measure component.

We test different choices of algorithm hyperparameters: $r_0 = 1, 5, 10, \rho = 0.6, 0.7,$ and $M= 1, 10, 100, 1000$. We set the polynomial degree $n=3,$ and test the performance of the SGD algorithm for different time-horizons, $T=0.1, 0.5, 1$. The benchmark ${\bar\gamma}^{\text{MC}}$ in \eqref{eq:benchmark_gamma} based on the Monte Carlo particle system approximation is obtained with $N_{0}=10^{ 7}$ particles. For each combination of the hyperparameters, we perform {1000}  independent runs of the algorithm, and record the average number of iterations of the SGD algorithm required to achieve the relative error accuracy $\varepsilon_m<1\%$. 

In Table \ref{tab: quad1} we report the lowest average number of iterations over different $(r_0,\rho)$ combinations for a given $M$ and $T$. Once more, as the value of the mini-batch increases the number of steps to reach convergence decreases. Compared to the model tested in Section \ref{sec:kuramoto}, increasing the time-horizon $T$ has a stronger impact on the convergence, which is due to the quadratic growth of the coefficient. We stress that for this model even the MC particle method requires an increasing number of particles to obtain reliable benchmark curves for increasing values of $T$. 

\begin{table}[htpb]
\centering 
{
\begin{tabular}{|c|llll|}
\hline
$ $ & $M = 1$ & $M = 10$ & $M= 100$ & $M= 1000$ \\
\hline
$T= 0.1$ & 258 (0.06) & 52.3 (0.02) & 31.5 (0.01) & 26.9 (0.03) \\
$T= 0.5$ & 3456.7 (3.52) & 590.3 (0.72) & 131.2 (0.20) & 79.7 (0.39)\\
$T= 1.0$ & 4995.9 (9.91) & 3582 (8.54) & 1003.9 (3.07) & 776.8 (8.25)\\
\hline
\end{tabular}
}
\caption{{ Polynomial drift MV-SDE.} Average number of iterations (execution time in seconds) over { 1000} independent runs of the algorithm to achieve relative error accuracy $\varepsilon_m<1\%$ with the best combination of $(r_0,\rho)$ for {each pair ($T,M$)}.  
Here $x_0=1$, $\delta=0.8$ { and the Monte Carlo benchmark exhibited an execution time of 2.57 seconds for $T=0.1$, 9.78 seconds for $T=0.5$ and 19.53 seconds for $T=1$.}}
\label{tab: quad1}
\end{table}

In Figure \ref{fig:quadr} we plot the output curves $\Lc {\bf a}_m=(\Lc {\bf a}_m^1, \Lc {\bf a}_m^2)$ of the SGD algorithm versus the benchmark curves ${\bar\gamma}^{\text{MC}}=({\bar\gamma}^{\text{MC}}_1 , {\bar\gamma}^{\text{MC}}_2)$ for { all time horizons $T=0.1$, $T=0.5$ and $T=1$}. 
In the plots, particularly for $T=0.5$, notice the considerable loss of accuracy in our approximation $\Lc {\bf a}_m^2$ of $\bar\gamma_2(t) = \Eb[X_t]$ 
near the right boundary of the interval (similar effects were visible in the plots relative to the Kuramoto model). 
After running additional tests, we do not believe that this effect is due to the polynomial projection, similar to Runge's type phenomena in polynomial interpolation over compact sets, but we claim that it is rather due to the norm used to define the cost function $F^2$ (cf. \eqref{eq:def_F}), in combination with the chosen threshold level we set for $\varepsilon_m$ in \eqref{eq:varepsilon Error norm}. Indeed, lowering this threshold from to $10^{-2}$ to $10^{-3}$ reduces considerably the error, also near the boundaries of the interval. However, one could look for strategies to distribute the error in a more uniform way across the interval, and obtain a better qualitative result while keeping the same error threshold. 
One possibility of improvement is to first employ the alternative objective function presented in Remark \ref{Rem:More weight towards T}, i.e.,  using a weight function to force the SGD algorithm to be more accurate towards the end of the interval. One can also use a similar weight function to compute a weighted $L^2({[0,T]})$ norm for error $\varepsilon_m$ in the stopping criterion. Another possibility is to experiment with different choices of basis functions listed in Example \ref{ex:polynomials} and Example \ref{ex:spline} and choose the one with best error performance.

\begin{figure}[htpb]
\centering
\includegraphics[width=0.9\textwidth]{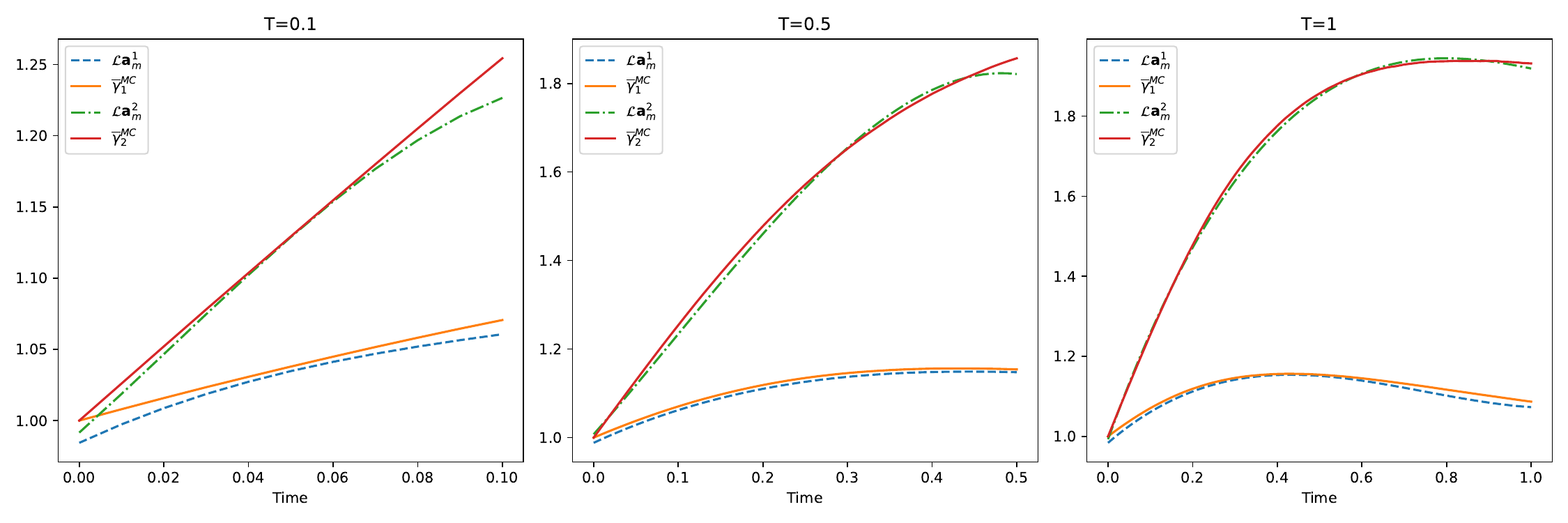}
\caption{{ Polynomial drift MV-SDE.} 
{ Comparison of the output curves $\Lc {\bf a}_m=(\Lc {\bf a}_m^1, \Lc {\bf a}_m^2)$ of the SGD algorithm versus the benchmark curves ${\bar\gamma}^{\text{MC}}=({\bar\gamma}^{\text{MC}}_1 , {\bar\gamma}^{\text{MC}}_2)$ for all values of $T$, for timestep size $h=10^{-2}$, $n=3$, $M=1000$, $x_0=1$ and $\delta=0.8$.}}
\label{fig:quadr}
\end{figure}

\subsection{Convolution-type MV-SDE}
\label{sec:convol}
We consider the following convolution-type MV-SDE:
\begin{equation}\label{eq:MKV_con}
\textrm{d} X_t = \left. \mathbb{E}\left[\exp\left(-{(X_t- x)^2}/{2}\right)\right]\right|_{x=X_t}  \textrm{d} t +  \sigma\, \textrm{d} W_t, \qquad X_0 = \xi \sim \mathcal{N}_{(0,1)}.
\end{equation}
As shown in \cite{belomestny2018projected}, the equation above can be approximated by employing a projection technique based on generalized Fourier series, which we sketch out here for the reader's convenience. Noticing that the drift coefficient in \eqref{eq:MKV_con} can be written as 
\begin{equation}
\Eb[b(x,X_t)]|_{x=X_t}, \qquad b(x,y) =  \exp\left(-{(y- x)^2}/{2}\right),
\end{equation}
we can expand $b(\cdot,x)$ in a suitable weighted $L^2$ space as follows. Setting the measure $\mu(dx) = e^{-x^2/2} dx$ on the Borel sets of $\Rb$, we consider the following orthonormal basis of $L^2(\Rb,\mu)$ of the normalised Hermite polynomials
\begin{equation}
\overline{H}_k(x) = c_k (-1)^k \mathrm{e}^{x^2} \frac{\mathrm{d}^k}{\mathrm{d} x^k} \bigl( \mathrm{e}^{-x^2}\bigr), \qquad c_k = \bigl(2^k k! \sqrt{\pi} \bigr)^{-1/2}, \qquad k\in\mathbb{N}_0.
\end{equation}
Therefore, for $K\in\mathbb{N}$ sufficiently large, $b(x,\cdot)$ can be approximated in $L^2(\Rb,\mu)$ as
\begin{equation}\label{eq:b_expans_four}
b(x,y) \approx \sum_{k=0}^{K} \alpha_k(x) \varphi_k(y),
\end{equation}
where
\begin{align}
\varphi_k(x) = \overline{H}_k(x) \mathrm{e}^{-x^2/2}, && \alpha_k(x)= \int_{\Rb} 
b(x,y) \varphi_k(y) dy 
 =  \pi^{1/4}\Bigl( \frac{1}{2}\Bigr)^{k/2} \frac{x^k}{\sqrt{k!}} \mathrm{e}^{-x^2/4}.
\end{align}
Therefore, the MV-SDE \eqref{sec:convol} can be approximated by the MV-SDE with separable coefficients
\begin{equation}
{d} X^{(K)}_t = \sum^K_{k=0}\mathbb {E}\big[\varphi_k(X^{(K)}_t)\big] \alpha_k\big(X^{(K)}_t\big) {d} t +  \sigma {d} W_t, 
\qquad X_0 = \xi = \mathcal{N}_{(0,1)},
\end{equation}
which can be clearly cast in the form of \eqref{eq:new_MKV}. For a given $K\in\mathbb{N}$, we run the SGD algorithm in order to approximate $\bar\gamma^{(K)}_k(t) = \mathbb {E}[\varphi_k(X^{(K)}_t)]$ for $k=0,\dots,K$.

For this model, we test the accuracy of the SGD algorithm by checking the quality of the density approximation of the solution $X_T$ to \eqref{sec:convol} that can be derived from the $\bar\gamma^{(K)}_k$ curves as follows (see also \cite{belomestny2018projected}). For a fixed $T>0$ denote by $w_T(x)$ and $w^{(K)}_T(x)$ the densities of $X_T$ and $X^{(K)}_T$, respectively. Following the same argument that led to \eqref{eq:b_expans_four}, for $K\in\mathbb{N}$ sufficiently large, $w_T(x)$ can be approximated in $L^2(\Rb,\mu)$ as
\begin{equation}\label{eq:den_expans_four}
w_T(x) \approx w^{(K)}_T(x) \approx \sum_{k=0}^{K}  \int_{\Rb} \varphi_k(y) w^{(K)}_T(y) dy\, \varphi_k(x) = \sum_{k=0}^{K}  \bar\gamma^{(K)}_k(T) \varphi_k(x) =: \tilde{w}^{(K)}_T(x) .
\end{equation}
Denote by $\tilde{w}^{(K),\text{SGD}}_T$ and $\tilde{w}^{(K),\text{MC}}_T$, the approximations of $\tilde{w}^{(K)}_T$ obtained by replacing the vector $\bar\gamma^{(K)}(T)$ with the output of the SGD algorithm, $\Lc {\bf a}_m(T)$, and with the particle system benchmark, ${\bar\gamma}^{\text{MC}}(T)$, respectively.

In our tests we set $\sigma = 0.1$ and $T = 1$ as in \cite{belomestny2018projected}. The number of particles to derive the benchmark ${\bar\gamma}^{\text{MC}}(T)$ is set as $N_{0}=10^7$. Note that in \cite{belomestny2018projected} the authors used $N_{0}=5\times 10^2$, which is why our density plots differ considerably from theirs. 

In Figure \ref{fig:convolution1} we plot the approximate density $\tilde{w}^{(K),\text{MC}}_T(x)$ for $K = 3, 5, 10, 20$, with $x$ ranging on the interval $[-3,4]$ . From this we conclude that the value $K=10$ is high enough in order for $\tilde{w}^{(K)}$ to accurately approximate ${w}_T$ over the interval $[-3,4]$. In Figure \ref{fig:convolution2} we test the accuracy of the SGD algorithm by plotting $\tilde{w}^{(K),\text{SGD}}_T(x)$ versus $\tilde{w}^{(K),\text{MC}}_T(x)$, with $K=10$, over the same interval. The parameters of the SGD algorithm were chosen as $n = 3$, $M = 100$, $r_0 = 5$, $\rho = 0.9$, and the algorithm halted after $m=172$ iterations ($35$ seconds of computation time) when the relative error reached $\varepsilon_m<1\%$.  
\begin{figure}[htpb]
\centering
\includegraphics[width=0.6\textwidth]{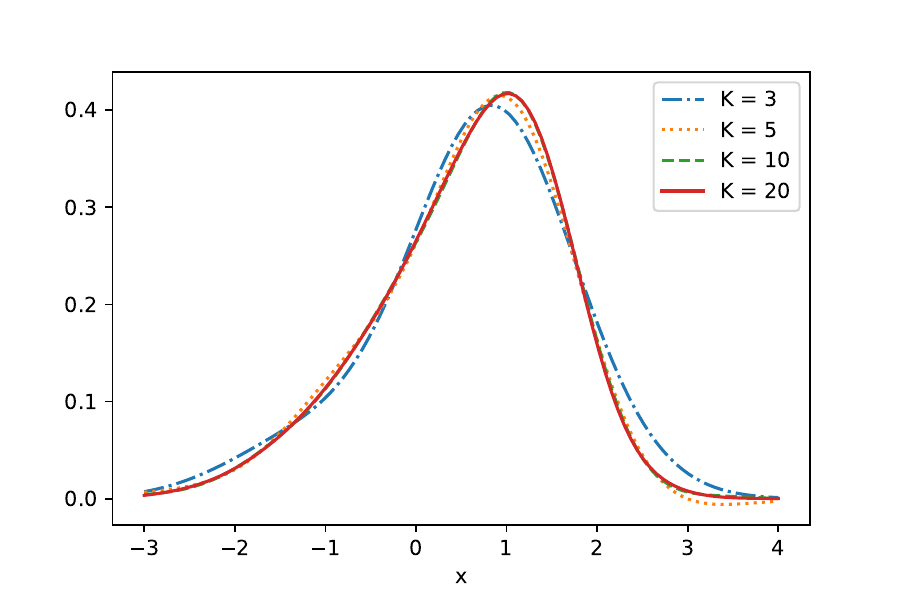}
\caption{Monte Carlo method densities $\tilde{w}^{(K),\text{MC}}_T(x)$ over the interval $[-3,4]$, for $T=1$ and $K = 3, 5, 10, 20$. The benchmark vector ${\bar\gamma}^{\text{MC}}(T)$ was computed with $N_{0}=10^7$ particles. Here $X_0 \sim \mathcal{N}_{(0,1)}$ and $\sigma = 0.1$. In the model \eqref{eq:MKV_con} we had $X_0 \sim \mathcal{N}_{(0,1)}$ and $\sigma = 0.1$.}
\label{fig:convolution1}
\end{figure}

\begin{figure}[htbp]
\centering
\includegraphics[width=0.6\textwidth]{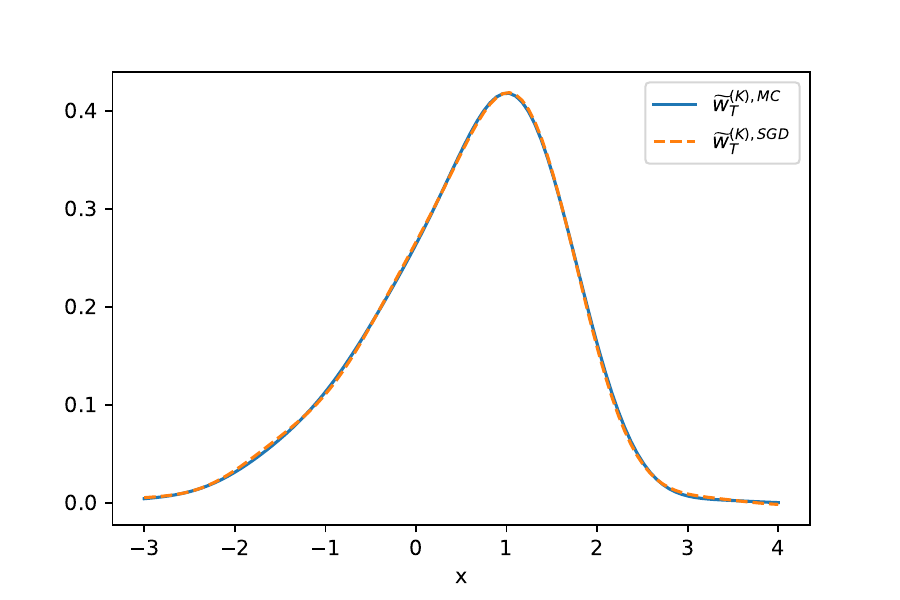}
\caption{Comparison of $\tilde{w}^{(K),\text{SGD}}_T(x)$ and $\tilde{w}^{(K),\text{MC}}_T(x)$ over the interval $[-3,4]$, for $T=1$ and $K = 10$. The benchmark vector ${\bar\gamma}^{\text{MC}}(T)$ was computed with $N_{0}=10^7$ particles. The SGD output $\Lc {\bf a}_m(T)$ was obtained after $m=172$ iterations ($35$ seconds of computation time), with parameters $n = 3$, $M = 100$, $r_0 = 5$, $\rho = 0.9$.
In the model \eqref{eq:MKV_con} we had $X_0 \sim \mathcal{N}_{(0,1)}$ and $\sigma = 0.1$.}
\label{fig:convolution2}
\end{figure}



\appendix

\section{SDE stability estimates}

Throughout this section, fix $T>0$ and let $({\Omega}, {\mathcal{F}}, ( {\mathcal{F}}_t )_{t \in [0,T]},  \Pb)$ be a filtered probability space, with $( {\mathcal{F}}_t)_{0\leq t \leq T}$ satisfying the usual hypothesis, supporting a $q$-dimensional Brownian motion ${W} = ( W_t)_{t \in [0,T]}$. Consider the $\Rb^d$-valued SDE
\begin{equation}\label{eq:sde_appendix_krylov}
\dd \cY_t = \rho_t \big( \mu(t,\cY_t)  \dd t +   \sigma(t,\cY_t) \dd W_t \big),\qquad \cY_0=\zeta,
\end{equation}
where $\zeta$ is an $\Rb^d$-valued $\mathcal{F}_0$-measurable random variable, $\mu: [0,T] \times \Rb^d \to \mathbb{R}^{m\times d}$ and $\beta: [0,T] \times \Rb^d  \to \mathbb{R}^{m \times d \times q}$ are deterministic measurable coefficients, and $\rho=(\rho_t)_{t\in[0,T]}$ is a progressively measurable $\Rb^m$-valued stochastic process. Hereafter, we denote by $\cY^{\rho}$ a solution to \eqref{eq:sde_appendix_krylov}.
\begin{lemma}
\label{lem:sde_stability_general}
Assume that $\zeta\in L^2(\Omega,\mathcal{F}_0,\Pb)$, $\rho$ is bounded, and there exists a constant $\Lambda>0$ such that 
\begin{equation}\label{eq:assump_lip_app}
|\mu(t,x)-\mu(t,y)| + |\sigma(t,x)-\sigma(t,y)| \leq \Lambda  |x-y| , \qquad t\in[0,T], \ x,y\in\Rb^d, 
\end{equation}
and
\begin{equation}\label{eq:assump_linear_growth}
|\mu(t,x)| + |\sigma(t,x)| \leq \Lambda (1 + |x|),  \qquad t\in[0,T], \ x\in\Rb^d.
\end{equation}
Then \eqref{eq:sde_appendix_krylov} has a unique solution $\cY^{\rho}$ in the sense of indistinguishability. 

Furthermore, for a given $\kappa>0$ and for any $\rho$ and $\hat\rho$ with 
$\|\rho\|_{\cS^\infty_{[0,T]}},\|\hat\rho\|_{\cS^\infty_{[0,T]}}\leq\kappa$ we have:
\begin{itemize} 
\item[(a)] if $\zeta\in L^{2p}(\Omega,\mathcal{F}_0,\Pb)$ for some $p\geq 1$, then
\begin{equation}
 \lVert \cY^{\rho}\rVert_{\cS^{2p}_{[0,T]}}   \leq C_1   \label{eq:estim_krylov} 
    \end{equation}
 where  $C_1>0$ only depends on $\kappa, p, T,\Lambda$, $\Eb [ |\zeta|^{2p} ]$ and the dimensions;
    \item[(b)] if $\zeta\in L^{4p}(\Omega,\mathcal{F}_0,\Pb)$ for some $p\geq 1$, then
\begin{equation}
   \label{eq:estim_krylov_bis}
   \lVert \cY^{\rho} - \cY^{\hat\rho} \rVert_{\cS^{2p}_{[0,T]}}     
   \leq C_2 \,           \|\rho - \hat\rho\|_{\mathbb{L}^{4p}_{[0,T]}}, 
    \end{equation}
where  $C_2>0$ only depends on $\kappa, p, T,\Lambda$, $\Eb [ |\zeta|^{4p} ]$ and the dimensions;
    \item[(c)] if 
    \begin{equation}\label{eq:bounded}
|\mu(t,x)| + |\sigma(t,x)| \leq \Lambda ,  \qquad t\in[0,T], \ x\in\Rb^d,
\end{equation}
then for any $p\geq 1$ we have
\begin{equation}
   \label{eq:estim_krylov_bis_bis}
   \lVert \cY^{\rho} - \cY^{\hat\rho} \rVert_{\cS^{2p}_{[0,T]}}     
   \leq C_3 \,           \|\rho - \hat\rho\|_{\mathbb{L}^{2p}_{[0,T]}} \leq C_3 \,  T^{\frac{1}{2p}}         \|\rho - \hat\rho\|_{\cS^\infty_{[0,T]}},
    \end{equation}
  where  $C_3>0$ only depends on $\kappa, p, T,\Lambda$ and the dimensions. 
    Note that \eqref{eq:estim_krylov_bis_bis} is true only assuming $\xi\in L^{2}(\Omega,\mathcal{F}_0,\Pb)$ and that $C_3$ does not depend on $\Eb[|\zeta|^2]$.
\end{itemize}
\end{lemma}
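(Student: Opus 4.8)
The plan is to regard $(t,\omega,x)\mapsto \rho_t(\omega)\mu(t,x)$ and $\rho_t(\omega)\sigma(t,x)$ as the (random, progressively measurable) drift and diffusion coefficients of \eqref{eq:sde_appendix_krylov}. Since $|\rho_t|\leq\kappa$ and $\mu,\sigma$ are $\Lambda$-Lipschitz in $x$ with linear growth by \eqref{eq:assump_lip_app}--\eqref{eq:assump_linear_growth}, these effective coefficients are Lipschitz in $x$ uniformly in $(t,\omega)$ and of linear growth. Existence and uniqueness of $\cY^{\rho}$ (in the sense of indistinguishability) then follow from the classical theory for It\^o SDEs with random Lipschitz coefficients (e.g.\ \cite{krylov2008controlled}). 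For the moment bound (a), I would apply It\^o's formula to $|\cY^{\rho}_t|^{2p}$, estimate the martingale part by the Burkholder--Davis--Gundy inequality, use the linear growth bound to close the estimate, and conclude by Gronwall's lemma; the resulting constant $C_1$ depends on $\Eb[|\zeta|^{2p}]$ through the initial term.

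For part (b), I would write the dynamics of $\cY^{\rho}-\cY^{\hat\rho}$ and split each coefficient difference as
\[
\rho_t\mu(t,\cY^{\rho}_t)-\hat\rho_t\mu(t,\cY^{\hat\rho}_t)=\rho_t\big(\mu(t,\cY^{\rho}_t)-\mu(t,\cY^{\hat\rho}_t)\big)+(\rho_t-\hat\rho_t)\mu(t,\cY^{\hat\rho}_t),
\]
and analogously for $\sigma$. By \eqref{eq:assump_lip_app} the first summand is bounded by $\kappa\Lambda|\cY^{\rho}_t-\cY^{\hat\rho}_t|$ and feeds the Gronwall term, while by \eqref{eq:assump_linear_growth} the second is bounded by $\Lambda|\rho_t-\hat\rho_t|(1+|\cY^{\hat\rho}_t|)$. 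Applying It\^o to $|\cY^{\rho}_t-\cY^{\hat\rho}_t|^{2p}$, the Burkholder--Davis--Gundy inequality, and Young's inequality to absorb the factor $|\cY^{\rho}_t-\cY^{\hat\rho}_t|^{2p-1}$, the inhomogeneous contribution reduces to $C\int_0^t\Eb\big[|\rho_s-\hat\rho_s|^{2p}(1+|\cY^{\hat\rho}_s|)^{2p}\big]\,ds$. A Cauchy--Schwarz step in the expectation bounds the integrand by $\big(\Eb[|\rho_s-\hat\rho_s|^{4p}]\big)^{1/2}\big(\Eb[(1+|\cY^{\hat\rho}_s|)^{4p}]\big)^{1/2}$, where part (a) applied with exponent $4p$ controls the second factor uniformly in $s$; this is precisely where the hypothesis $\zeta\in L^{4p}$ enters. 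A further Cauchy--Schwarz in time bounds the whole term by $C\sqrt{T}\,\|\rho-\hat\rho\|_{\mathbb{L}^{4p}_{[0,T]}}^{2p}$, and Gronwall followed by extraction of $2p$-th roots yields \eqref{eq:estim_krylov_bis}.

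Part (c) repeats the argument of (b) under the boundedness hypothesis \eqref{eq:bounded}: now $|\mu(t,\cY^{\hat\rho}_t)|\leq\Lambda$ without a growth factor, so the second summand is simply controlled by $\Lambda|\rho_t-\hat\rho_t|$. The inhomogeneous term becomes $C\int_0^t\Eb[|\rho_s-\hat\rho_s|^{2p}]\,ds=C\,\|\rho-\hat\rho\|_{\mathbb{L}^{2p}_{[0,T]}}^{2p}$, with no appeal to moment bounds on $\cY^{\hat\rho}$; this explains both why $\zeta\in L^{2}$ suffices and why $C_3$ is independent of $\Eb[|\zeta|^{2}]$. The last inequality in \eqref{eq:estim_krylov_bis_bis} is immediate from $\int_0^T\Eb[|\rho_s-\hat\rho_s|^{2p}]\,ds\leq T\,\|\rho-\hat\rho\|_{\cS^\infty_T}^{2p}$.

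The main obstacle is the cross-term in part (b). Because the coefficients only have linear growth, the summand $(\rho_t-\hat\rho_t)\mu(t,\cY^{\hat\rho}_t)$ couples the perturbation $\rho-\hat\rho$ with the unbounded state $\cY^{\hat\rho}$, and decoupling the two by H\"older's inequality is exactly what forces both the stronger integrability $\zeta\in L^{4p}$ and the appearance of the $\mathbb{L}^{4p}_{[0,T]}$ norm (rather than $\mathbb{L}^{2p}_{[0,T]}$) on the right-hand side of \eqref{eq:estim_krylov_bis}. Everything else is routine It\^o estimation.
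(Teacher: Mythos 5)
Your proposal is correct and follows essentially the same route as the paper: the paper's proof also reduces parts (b) and (c) to the decomposition $\rho_t\mu(t,\cY^{\rho}_t)-\hat\rho_t\mu(t,\cY^{\hat\rho}_t)$ into a Lipschitz term and the cross-term $(\rho_t-\hat\rho_t)\mu(t,\cY^{\hat\rho}_t)$, followed by the same Cauchy--Schwarz decoupling that produces the $\mathbb{L}^{4p}_{[0,T]}$ norm in (b) (via the $4p$-moment bound from (a)) and the $\mathbb{L}^{2p}_{[0,T]}$ norm in (c). The only difference is that the paper cites Krylov's Theorems 2.5.7, 2.5.9 and 2.5.10 for well-posedness, the moment estimate and the stability inequality, whereas you rederive them by the standard It\^o--BDG--Gronwall argument; both are fine.
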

\begin{proof}
The well-posedness of $\cY^{\rho}$ and part (a) of the result are particular cases of \cite[Theorem 2.5.7 (p.77)]{krylov2008controlled} and \cite[Theorem 2.5.10 (p.85)]{krylov2008controlled}, respectively. 

Parts (b) and (c) can be readily obtained by combining \eqref{eq:estim_krylov} with \cite[Theorem 2.5.9 (p.83)]{krylov2008controlled}. Precisely, from \cite[Theorem 2.5.9]{krylov2008controlled}, there exists $C>0$, only dependent on $\kappa, p, T,\Lambda$ and $K$, such that
\begin{align}
\Eb\Big[ \sup_{t \leq T} |\cY^\rho_t - \cY^{\hat\rho}_t |^{2p}  \Big] &\leq C\, \Eb\bigg[ \int^T_0 \Big( \big|\rho_t \mu(t,\cY^{\hat\rho}_t) - \hat\rho_t\mu(t,\cY^{\hat\rho}_t)\big|^{2p} 
 + \big|\rho_t \sigma(t,\cY^{\hat\rho}_t) - \hat\rho_t\sigma(t,\cY^{\hat\rho}_t)\big|^{2p}  \Big) \dd t \bigg]
\\
&
\leq C\, \Eb\bigg[ \int^T_0 |\rho_t  - \hat\rho_t|^{2p} \Big( \big| \mu(t,\cY^{\hat\rho}_t)\big|^{2p}+ \big| \sigma(t,\cY^{\hat\rho}_t)\big|^{2p}   \Big) \dd t \bigg]
\\
&\leq C \Lambda\,  \|\rho - \hat\rho\|^{2p}_{\mathbb{L}^{4p}_{[0,T]}} \, \bigg( \Eb\bigg[ \int^T_0  \big( 1 + \big| \cY^{\hat\rho}_t\big|   \big)^{4p} \dd t\bigg] \bigg)^{\frac{1}{2}},
\end{align}
where the last inequality follows from \eqref{eq:assump_linear_growth} and the Cauchy-Schwarz inequality. Combining the result with \eqref{eq:estim_krylov}, yields \eqref{eq:estim_krylov_bis}. When \eqref{eq:bounded} holds true, we have a stronger estimate
\begin{equation}
\Eb\bigg[ \int^T_0 |\rho_t  - \hat\rho_t|^{2p} \Big( \big| \mu(t,\cY^{\hat\rho}_t)\big|^{2p}+ \big| \sigma(t,\cY^{\hat\rho}_t)\big|^{2p}   \Big) \dd t \bigg] \leq  \Lambda^{2p} \, \|\rho - \hat\rho\|^{2p}_{\mathbb{L}^{2p}_{[0,T]}},
\end{equation}
which yields \eqref{eq:estim_krylov_bis_bis}.
\end{proof}



\section{Proofs}
\label{sec:proofs}

\subsection[Proof of time-regularity properties]{Proof of the time-regularity of $\bar\gamma$}
\label{sec:proof time reg}
\begin{proof}[Proof of Proposition \ref{prop:regularity_gamma}]
Let $X$ be the unique solution to \eqref{eq:new_MKV}, which exists due to Proposition \ref{prop:well-posedness-of-X}. 
A straightforward application of It\^o's formula shows that its time-marginal laws $\mu_t(\dd x)$ solve the Fokker-Plank equation in distributional form over $[0, T] \times \mathbb{R}^d$, which is 
\begin{equation}
\int_0^{T} \int_{\mathbb{R}^d}   \Kc \psi(s,x)  \mu_s(\dd x) \dd s = 0, \qquad \psi\in C^{\infty}_0\big( [0, T] \times \mathbb{R}^d \big).
\end{equation}
By Assumption \ref{ass:regularity-for-SDE-no-basis} and Proposition \ref{prop:well-posedness-of-X}, the coefficients of $\Kc$ are continuous. Therefore, a standard approximating procedure yields
\begin{equation}\label{eq:deriv_expect}
\frac{\dd}{\dd t} \Eb[ f( t , X_t ) ] = \Eb[ \Kc f ( t,  X_t ) ], \qquad t\in[0,T],
\end{equation}
for any test function $f \in C^{1,2}_b ([0,T] \times \mathbb{R}^d)$. 
Now, by Assumption \ref{assum:extra_reg}, $\varphi\in C^2_b(\mathbb{R}^d)$. Thus, \eqref{eq:deriv_expect} with $f(t,x)= \varphi(x)$ becomes
\begin{equation}
\frac{\dd}{\dd t} \bar\gamma(t) = \Eb[ \Kc \varphi ( X_t ) ], \qquad t \in [0,T],
\end{equation}
which is \eqref{eq:represent_deriv_gamma} with $n=1$, and in particular $\bar\gamma\in C^{1}_b([0,T])$. 

Let now $m\in \{ 1,\cdots, N-1\}$, assume $\bar\gamma\in C^{m}_b([0,T])$ and \eqref{eq:represent_deriv_gamma} be true for $m$
. By Assumption \ref{assum:extra_reg} we obtain 
\begin{equation}
\Kc^{m} \varphi 
\in C^{1,2}_b ([0,T] \times \mathbb{R}^d).
\end{equation}
Hence, \eqref{eq:deriv_expect} with $f= \Kc^{m} \varphi $ yields
\begin{equation}
\frac{\dd^{ m +1}}{\dd t^{ m +1}} \bar\gamma(t) = \frac{\dd}{\dd t} \Eb[  \big(\Kc^{m}\varphi\big)(t,X_t) ]  = \Eb[  \big(\Kc^{m +1}\varphi\big)(t,X_t) ], \qquad t \in [0,T],
\end{equation}
and, in particular, $\bar\gamma\in C^{m+1}_b([0,T])$. This concludes the proof.
\end{proof}

\subsection{Auxiliary stability and moment estimates results}

Hereafter, we denote by $(Z^a,Y^{a;k,j})
$ any solution to \eqref{eq:SDE_Z_ter}-\eqref{eq:derivative_Z} for a given $a\in\Rb^{(n+1)K}$ and $k=0,\cdots,n$, $j=1,\cdots,K$.
Throughout the remainder of this section we denote by $C$, indistinctly, any positive constant that does not depend on $a \in\Rb^{(n+1)K}$.

We start with the following stability and moment estimates for the solution to the system \eqref{eq:SDE_Z_ter}-\eqref{eq:derivative_Z}.
\begin{lemma}
\label{lem:estimates_z_Y}
{ Let Assumption \ref{ass:initial} be in force and 
assume that $\alpha(t,\cdot),\beta(t,\cdot)\in C^2(\mathbb{R}^d)$, for any $t\in[0,T]$, with derivatives bounded by some $R>0$}. Then, for any $a\in\Rb^{(n+1)K}$, the system \eqref{eq:SDE_Z_ter}-\eqref{eq:derivative_Z} is strongly well-posed for any $k = 0, \cdots, n$ and $j = 1,\cdots, K$. Furthermore, we have
\begin{align}
\lVert Z^{a}\rVert_{\cS^2_{[0,T]}} + \sum_{\substack{k=0,\cdots,n \\ j=1,\cdots, K}} \lVert Y^{a;k,j} \rVert_{\cS^2_{[0,T]}}   & \leq C , && a\in\Rb^{(n+1)K}  ,\label{eq:estimates_Z_a} \\     \label{eq:estimates_lip_Z_a}
   \lVert Z^{a} - Z^{a'} \rVert_{\cS^2_{[0,T]}} + \sum_{\substack{k=0,\cdots,n \\ j=1,\cdots, K}} \lVert Y^{a;k,j} - Y^{a';k,j} \rVert_{\cS^2_{[0,T]}}     
   &\leq C \,           |a - a' |, && a,a'\in\Rb^{(n+1)K},
    \end{align}
    for some constant $C>0$ independent of $a$. 
\end{lemma}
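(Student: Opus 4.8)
The plan is to exploit the triangular structure of the system: \eqref{eq:SDE_Z_ter} is a genuinely Markovian SDE for $Z^a$ on its own, while \eqref{eq:derivative_Z} is, once $Z^a$ has been constructed, a \emph{linear} SDE for $Y^{a;h,j}$. I would therefore solve and estimate the two equations in cascade, and deliberately avoid treating the full $\Rb^{2d}$-system jointly: the term $\sum_i Y^i\,\h(\Lc a)\,\partial_{z_i}\alpha(t,Z)$ is only locally (not globally) Lipschitz in $(Z,Y)$, so Lemma \ref{lem:sde_stability_general} cannot be applied to the coupled system directly.

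\emph{Step 1: the process $Z^a$.} Writing $\rho_t := \h\big((\Lc a)(t)\big)$, equation \eqref{eq:SDE_Z_ter} is exactly of the form \eqref{eq:sde_appendix_krylov} with $\mu=\alpha$, $\sigma=\beta$ and a \emph{deterministic} $\rho$. Since $\h$ is bounded, $\|\rho\|_{\cS^\infty_T}\le\|\h\|_\infty$ uniformly in $a$, and $\alpha,\beta$ are bounded and Lipschitz (Assumption \ref{assum:extra_reg}); hence Lemma \ref{lem:sde_stability_general}(a) with $p=1$ yields strong well-posedness and the bound $\|Z^a\|_{\cS^2_{[0,T]}}\le C$ uniformly in $a$. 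For the difference, I would use part (c) of the same lemma: because the coefficients are bounded, $\|Z^a-Z^{a'}\|_{\cS^{2p}_{[0,T]}}\le C\,\|\rho-\rho'\|_{\cS^\infty_T}$ holds for \emph{every} $p\ge1$ using only $\xi\in L^2$. Since $\h$ is Lipschitz and $\Lc$ is bounded (Remark \ref{rem:distance}), $\|\rho-\rho'\|_{\cS^\infty_T}=\|\h\circ\Lc a-\h\circ\Lc a'\|_\infty\le C|a-a'|$. This gives the $Z$-parts of \eqref{eq:estimates_Z_a} and \eqref{eq:estimates_lip_Z_a}, and moreover the $\cS^4$ Lipschitz bound for $Z^a-Z^{a'}$ that Step 3 will need.

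\emph{Step 2: moment bounds for $Y^{a;h,j}$.} With $Z^a$ frozen, \eqref{eq:derivative_Z} is affine in $Y$, and all its data are bounded uniformly in $a$: the forcing multipliers $g_h(t)\,\nabla\h_j(\Lc a(t))$ and the state multipliers $\h(\Lc a(t))\,\partial_{z_i}\alpha(t,Z^a_t)$, $\h(\Lc a(t))\,\partial_{z_i}\beta(t,Z^a_t)$ are bounded because $\alpha,\beta,\partial_{z_i}\alpha,\partial_{z_i}\beta$ are bounded (Assumption \ref{assum:extra_reg} with $N=2$), $\h,\nabla\h$ are bounded, and each $g_h\in C([0,T])$ is bounded on $[0,T]$; note this step does not even require a moment bound on $Z^a$. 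With $Y^{a;h,j}_0=0$, the standard linear-SDE estimate --- It\^o's formula applied to $|Y|^{2p}$, the Burkholder--Davis--Gundy inequality and Gr\"onwall's lemma --- gives strong well-posedness and $\|Y^{a;h,j}\|_{\cS^{2p}_{[0,T]}}\le C$ for every $p\ge1$, uniformly in $a$. This provides the $\cS^2$ bound in \eqref{eq:estimates_Z_a} and the $\cS^4$ bound reserved for the next step.

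\emph{Step 3: the Lipschitz bound for $Y^{a;h,j}$, the main obstacle.} Set $\Delta Y:=Y^{a;h,j}-Y^{a';h,j}$; it solves the same linear SDE driven by the $a$-coefficients, with zero initial datum and an explicit forcing collecting the $a\leftrightarrow a'$ discrepancies. Telescoping, this forcing splits into (A) a part not involving $Y^{a'}$, namely the inhomogeneous differences such as $g_h(t)\big[\nabla\h_j(\Lc a)\alpha(t,Z^a)-\nabla\h_j(\Lc a')\alpha(t,Z^{a'})\big]$, each bounded pointwise by $C\big(|a-a'|+|Z^a_t-Z^{a'}_t|\big)$ via the Lipschitz continuity of $\h,\nabla\h$ (so $\h$ should be taken in $C^2_b$) and of $\alpha,\partial_z\alpha$, whose $\mathbb{L}^2_{[0,T]}$-norm is therefore $\le C|a-a'|$ by the $\cS^2$ estimate of Step 1; and (B) the genuinely quadratic term $\sum_i Y^{a';h,j,i}_t\big[\h(\Lc a)\partial_{z_i}\alpha(t,Z^a)-\h(\Lc a')\partial_{z_i}\alpha(t,Z^{a'})\big]$ (and its $\beta$ analogue), in which a difference of the same order $C(|a-a'|+|Z^a_t-Z^{a'}_t|)$ multiplies the \emph{unbounded} $Y^{a'}$. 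The crux is to control the $\mathbb{L}^2_{[0,T]}$-norm of (B): Cauchy--Schwarz gives $\Eb\big[|Y^{a';h,j}_t|^2|Z^a_t-Z^{a'}_t|^2\big]\le\|Y^{a';h,j}\|^2_{\cS^4_{[0,T]}}\big(\Eb\big[|Z^a_t-Z^{a'}_t|^4\big]\big)^{1/2}$, so the $\cS^4$ bounds from Steps 1--2 turn (B) into $C|a-a'|^2$. A final Gr\"onwall argument on $\Eb\big[\sup_{s\le t}|\Delta Y_s|^2\big]$ then yields $\|\Delta Y\|_{\cS^2_{[0,T]}}\le C|a-a'|$, completing \eqref{eq:estimates_lip_Z_a}. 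I expect this product term (B) to be the only delicate point, since it is precisely what forces one to work at the level of fourth moments and to rely on the fact that, for bounded coefficients, the $\cS^4$ estimate for $Z^a-Z^{a'}$ is available under $\xi\in L^2$ alone.
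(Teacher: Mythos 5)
Your proposal is correct, and Step 1 coincides exactly with the paper's treatment of $Z^a$ (Lemma \ref{lem:sde_stability_general}(a) for the moment bound, part (c) for the $\cS^4$ Lipschitz estimate under $\xi\in L^2$ only). Where you diverge is in Steps 2--3: you carry out the estimates for $Y^{a;h,j}$ by hand (It\^o/BDG/Gr\"onwall, then an explicit splitting of the forcing for $\Delta Y$ into a bounded part (A) and a product part (B) controlled by Cauchy--Schwarz at fourth moments), whereas the paper avoids any new computation by recasting the tangent equation \eqref{eq:derivative_Z} as \emph{another} instance of the abstract SDE \eqref{eq:sde_appendix_krylov} --- with $\zeta=0$, $m=2d(1+d)$, $\mu(t,y)$ and $\sigma(t,y)$ built from blocks $I_d, y_1 I_d,\dots,y_d I_d$ (hence globally Lipschitz with linear growth in $y$), and all the $a$- and $Z^a$-dependent multipliers $g_h\nabla\h_j(\Lc a)\alpha(t,Z^a)$, $\h(\Lc a)\partial_{z_i}\alpha(t,Z^a)$, etc.\ absorbed into the bounded progressively measurable process $\rho_t(a)$ --- and then invokes Lemma \ref{lem:sde_stability_general}(a)--(b) directly, reducing \eqref{eq:estimates_lip_Z_a} to the bound $\|\rho(a)-\rho(a')\|_{\mathbb{L}^4_{[0,T]}}\le C|a-a'|$. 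The $\mathbb{L}^{4p}$ norm appearing in \eqref{eq:estim_krylov_bis} is precisely the packaged form of your Cauchy--Schwarz step (B), so both routes rest on the same two ingredients: the $\cS^4$ estimate for $Z^a-Z^{a'}$ and fourth moments of $Y^{a'}$. Your opening observation --- that the coupled $(Z,Y)$ system is only locally Lipschitz, so the stability lemma cannot be applied to it jointly --- is exactly why the paper applies the lemma twice in cascade rather than once; your version makes the mechanism more transparent at the cost of redoing a standard Gr\"onwall argument, while the paper's reformulation buys brevity and reuses the appendix lemma verbatim. Your side remark that $\nabla\h$ must itself be Lipschitz (i.e.\ $\h\in C^2_b$) is also implicitly used by the paper when it bounds $|\rho_t(a)-\rho_t(a')|$.
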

\begin{proof}
Well-posedness and estimates \eqref{eq:estimates_Z_a}-\eqref{eq:estimates_lip_Z_a} for $Z$ follow directly from Lemma \ref{lem:sde_stability_general} using  that $\h$ is bounded and Lipschitz continuous. In particular, Lemma \ref{lem:sde_stability_general}-(a) yields
\begin{equation}
\lVert Z^{a}\rVert_{\cS^2_{[0,T]}} \leq C, 
\end{equation}
for $C>0$ independent of $a$. 
Lemma \ref{lem:sde_stability_general}-(c) yields 
\begin{align}
\label{eq:estimate_z_a_aprime} \nonumber
  \lVert Z^{a} - Z^{a'} \rVert_{\cS^4_{[0,T]}} 
	& 
	\leq 
	C \|  \h\circ \Lc a - \h\circ \Lc a'  \|_{\infty} 
	\\
	& \leq 
	C \|  \nabla \h  \|_{\infty} \|  \Lc a -  \Lc a'  \|_{\infty} 
	\leq 
	C \|  \nabla \h  \|_{\infty} \Big(\sum_{k=0,\cdots, n} \| g_k \|_{\infty} \Big) |  a -  a'  |,
\end{align}
with $C$ depending on $\| \h \|_{\infty}$ but not on $a$, and $\nabla\h$ stands for the Jacobian of $\h:\Rb^K \to \Rb^K$.

As for $Y$, to ease the notation we only consider the case $q=1$. The general case is completely analogous. We observe that SDE \eqref{eq:derivative_Z} can be cast in the form of \eqref{eq:sde_appendix_krylov} with $\zeta=0$, $m = 2 d(1+d) $, 
\begin{align}
\rho_t 
= \rho_t (a) 
& 
= \Big(  g_k(t) \nabla {\h_j} \big( \Lc a (t) \big) \alpha(t,Z_t), \h\big( \Lc a(t) \big) \partial_{z_1} \alpha(t,   Z_t ) , \cdots,  \h\big( \Lc a(t) \big) \partial_{z_d} \alpha(t,   Z_t ) ,
\\  
&\qquad \quad 
g_k(t) \nabla {\h_j} \big( \Lc a (t) \big) \beta(t,Z_t), \h\big( \Lc a(t) \big) \partial_{z_1} \beta(t,   Z_t ) , \cdots,  \h\big( \Lc a(t) \big) \partial_{z_d} \beta(t,   Z_t ) \Big),
\end{align}
and   
\begin{equation}
\mu(t,y) = \begin{pmatrix}
    I_d  \\
    y_{1} I_d \\
    \vdots\\
    y_{d} I_d \\
    0_{d(1+d),d}
  \end{pmatrix}, \qquad 
  \sigma(t,y) = \begin{pmatrix}
     0_{d(1+d),d} \\
    I_d  \\
    y_{1} I_d \\
    \vdots\\
    y_{d} I_d 
  \end{pmatrix}, \qquad t\in [0,T], \ y=(y_1,\cdots,y_d)\in\mathbb{R}^d,
\end{equation}
where $I_d$ is the $d\times d$ identity matrix and $0_{d(1+d),d}$ is the $d(1+d)\times d$-dimensional matrix with null entries. 
Under our assumptions, the functions $\mu,\sigma$ satisfy conditions \eqref{eq:assump_lip_app}-\eqref{eq:assump_linear_growth} for some $\Lambda>0$ that is independent of $a$ and the stochastic process $\rho$ is bounded by a constant that is also independent of $a$ (recall that by construction $\h$ is constant outside a ball -- see \eqref{eq:h}).
Therefore, by Lemma \ref{lem:sde_stability_general}, \eqref{eq:derivative_Z} is strongly well-posed and we have (by Part (b) of Lemma \ref{lem:sde_stability_general}) that for any $k = 0, \cdots, n$ and $j = 1,\cdots, K$ 
\begin{align}
 \lVert Y^{a;k,j}\rVert_{\cS^2_{[0,T]}}  & \leq C ,  \label{eq:estim_Y_lem} \\     \label{eq:estim_Y_lem_bis}
   \lVert Y^{a;k,j} - Y^{a';k,j} \rVert_{\cS^2_{[0,T]}}     &
   \leq C \,           \|\rho(a) - \rho(a')\|_{\mathbb{L}^4_{[0,T]}}.
    \end{align}
In order to conclude, we write
\begin{align}
| \rho_t(a) - \rho_t(a') |& \leq  |g_k(t)| \Big( \big|  \nabla {\h_j} \big( \Lc a (t) \big) \alpha(t,Z^a_t) -\nabla {\h_j}  \big( \Lc a' (t) \big) \alpha(t,Z^{a'}_t)   \big| \\
&+ \big|  \nabla {\h_j} \big( \Lc a (t) \big) 
\beta(t,Z^a_t) - \nabla {\h_j} \big( \Lc a' (t) \big) \beta(t,Z^{a'}_t)   \big| \Big) \hspace{0pt}\\
&+ \sum_{i=1}^d \Big(  \big|  \h\big( \Lc a(t) \big) \partial_{z_i} \alpha(t,   Z^a_t )  -   \h\big( \Lc a'(t) \big) \partial_{z_i} \alpha(t,   Z^{a'}_t ) \big|\\
&+ \big|  \h\big( \Lc a(t) \big) \partial_{z_i} \beta(t,   Z^a_t )  -   \h\big( \Lc a'(t) \big) \partial_{z_i} \beta(t,   Z^{a'}_t ) \big|   \Big),
\end{align}
whereby from the boundedness of $\h,\alpha,\beta$ and their derivatives, we obtain  
\begin{equation}
| \rho_t(a) - \rho_t(a') | \leq C |\Lc a (t) - \Lc a' (t) | + | Z^a_t - Z^{a'}_t |.
\end{equation}
This then yields, via \eqref{eq:estimate_z_a_aprime}, 
\begin{equation}
  \|\rho(a) - \rho(a')\|^4_{\mathbb{L}^4_{[0,T]}} \leq C |a - a'|^4,
\end{equation}
and completes the proof.
\end{proof}

\begin{lemma}\label{Lemma:4.15.Diff}
{ Let Assumption \ref{ass:initial} be in force. Assume also that $\alpha(t,\cdot),\beta(t,\cdot)\in C^2(\mathbb{R}^d)$, for any $t\in[0,T]$, and $\varphi\in C^{2}(\mathbb{R}^K)$, with derivatives bounded by some $R>0$}. Then, the function $\Rb^{(n+1)K}\ni a\mapsto\Eb \big[    \varphi(Z^{a}_t) \big] $ is differentiable, and 
\begin{equation}
\label{eq:der_expect}
\partial_{a_{k,j}}  \Eb \big[    \varphi(Z^{a}_t) \big]  
= 
\Eb \big[ \nabla_x \varphi \big(Z^{a}_{t})Y^{a;k,j}_{t} \big] , \qquad a\in\Rb^{(n+1)K} , \ t\in[0,T],
\end{equation}
for any $k = 0, \cdots, n$ and $j = 1,\cdots, K$. Furthermore, the functions $\Rb^{(n+1)K}\ni a\mapsto  \Eb \big[ \nabla_x \varphi \big(Z^{a}_{t})Y^{a;k,j}_{t} \big] $ are bounded and Lipschitz continuous, uniformly with respect to $t\in [0,T]$ (for any $k,j$).
\end{lemma}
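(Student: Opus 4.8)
The plan is to establish the derivative formula \eqref{eq:der_expect} through a first-order difference-quotient argument, relying on the stability and moment estimates of Lemma \ref{lem:estimates_z_Y}, and then to read off boundedness and Lipschitz regularity of the resulting map from those same estimates.

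First I would show that the tangent process $Y^{a;h,j}$ is genuinely the partial derivative of $a\mapsto Z^a$ in $\cS^2_{[0,T]}$. Fixing $a$ and the canonical unit vector $e_{h,j}$, write $b^\eps := a+\eps e_{h,j}$ and set $R^\eps := \eps^{-1}(Z^{b^\eps}-Z^a) - Y^{a;h,j}$, with $R^\eps_0=0$. Subtracting the dynamics \eqref{eq:SDE_Z_ter} for $Z^{b^\eps}$ and $Z^a$ and \eqref{eq:derivative_Z} for $Y^{a;h,j}$, and inserting mean-value (Taylor) representations both for the increment $\h(\Lc b^\eps(t))-\h(\Lc a(t))$ (which, divided by $\eps$, tends to $g_h(t)\nabla\h_j(\Lc a(t))$ by smoothness of $\h$ and linearity of $\Lc$) and for $\alpha(t,Z^{b^\eps}_t)-\alpha(t,Z^a_t)$, $\beta(t,Z^{b^\eps}_t)-\beta(t,Z^a_t)$ (expressed through $\partial_{z_i}\alpha,\partial_{z_i}\beta$), one finds that $R^\eps$ solves a linear SDE driven by $R^\eps$ itself, with bounded coefficients, plus an inhomogeneous term whose $\cS^2_{[0,T]}$-norm tends to $0$ as $\eps\to0$. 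Here I would use the $C^{1,2}_b$-regularity of $\alpha,\beta$ and the smoothness of $\h$ from Assumption \ref{assum:extra_reg} with $N=2$, together with the convergence $Z^{b^\eps}\to Z^a$ in $\cS^2_{[0,T]}$ and the uniform bounds \eqref{eq:estimates_Z_a}--\eqref{eq:estimates_lip_Z_a}. A standard application of the Burkholder--Davis--Gundy and Gronwall inequalities then yields $\|R^\eps\|_{\cS^2_{[0,T]}}\to0$. (Alternatively, one may invoke the classical differentiability results for stochastic flows in \cite{kunita1984stochastic}.)

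Next I would pass the derivative through the expectation and through $\varphi$. By the fundamental theorem of calculus, $\varphi(Z^{b^\eps}_t)-\varphi(Z^a_t) = \big(\int_0^1 \nabla_x\varphi(Z^a_t+s(Z^{b^\eps}_t-Z^a_t))\,\dd s\big)(Z^{b^\eps}_t-Z^a_t)$, so dividing by $\eps$ and subtracting $\nabla_x\varphi(Z^a_t)Y^{a;h,j}_t$ splits the error into two pieces: one equal to $\big(\int_0^1\nabla_x\varphi(\cdots)\,\dd s\big)R^\eps_t$, which vanishes in $L^1$ by the previous step and boundedness of $\nabla_x\varphi$; and one equal to $\big[\int_0^1\nabla_x\varphi(\cdots)\,\dd s - \nabla_x\varphi(Z^a_t)\big]Y^{a;h,j}_t$, which vanishes in $L^1$ by dominated convergence, since $Z^{b^\eps}_t\to Z^a_t$ in $L^2$, $\nabla_x\varphi$ is continuous and bounded, and $Y^{a;h,j}_t\in L^2$. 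Taking expectations gives \eqref{eq:der_expect}. Boundedness and Lipschitz continuity of $a\mapsto\Eb[\nabla_x\varphi(Z^a_t)Y^{a;h,j}_t]$ then follow from Lemma \ref{lem:estimates_z_Y}: boundedness of $\nabla_x\varphi$ by $\RR$ and \eqref{eq:estimates_Z_a} give $|\Eb[\nabla_x\varphi(Z^a_t)Y^{a;h,j}_t]|\leq \RR\,\|Y^{a;h,j}\|_{\cS^2_{[0,T]}}\leq C$, uniformly in $a$ and $t$; for the Lipschitz bound, add and subtract $\Eb[\nabla_x\varphi(Z^{a'}_t)Y^{a;h,j}_t]$, control the term with the difference of tangent processes by boundedness of $\nabla_x\varphi$ and \eqref{eq:estimates_lip_Z_a}, and control the term with $\nabla_x\varphi(Z^a_t)-\nabla_x\varphi(Z^{a'}_t)$, via Lipschitz continuity of $\nabla_x\varphi$ (boundedness of $H_x\varphi$, valid since $\varphi\in C^4_b$) and Cauchy--Schwarz, by $\RR\,\|Z^a-Z^{a'}\|_{\cS^2_{[0,T]}}\|Y^{a;h,j}\|_{\cS^2_{[0,T]}}\leq C|a-a'|$. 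Uniformity in $t$ is automatic as all bounds are in the $\cS^2_{[0,T]}$-norm.

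The main obstacle is the first step: rigorously identifying $Y^{a;h,j}$ as the derivative of $Z^a$ requires carefully assembling the linear SDE for the remainder $R^\eps$ and verifying that its inhomogeneous part vanishes in $\cS^2_{[0,T]}$, which is precisely where the $C^{1,2}_b$-regularity of the coefficients and the stability estimates of Lemma \ref{lem:estimates_z_Y} are essential; the remaining steps are then routine.
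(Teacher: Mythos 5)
Your proof is correct, but the route to the identity \eqref{eq:der_expect} differs from the paper's. You establish $L^2$-differentiability of the flow directly, by writing the remainder $R^\eps=\eps^{-1}(Z^{a+\eps e_{h,j}}-Z^a)-Y^{a;h,j}$ as the solution of a linear SDE with an inhomogeneity vanishing in $\cS^2_{[0,T]}$, and then push the difference quotient through $\varphi$ and the expectation via an $L^1$ convergence argument. The paper instead short-circuits the first step by quoting Kunita's flow-differentiability theorem \cite[Theorem 2.3.1]{kunita1984stochastic} to get $\partial_a Z^a_t = Y^{a;h,j}_t$ almost surely, and then avoids difference quotients of the expectation altogether: it sets $f(t;a):=\Eb[\nabla_x\varphi(Z^a_t)Y^{a;h,j}_t]$, proves $f(t;\cdot)$ is bounded and Lipschitz (exactly as in your last step), writes $\Eb[\varphi(Z^a_t)]-\Eb[\varphi(Z^0_t)]=\int_0^a f(t;\hat a)\,\dd\hat a$ by the fundamental theorem of calculus in the parameter plus Fubini, and differentiates back using continuity of $f(t;\cdot)$. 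Your approach is more self-contained (it does not need the flow theorem, which you only mention as an alternative) at the cost of the Gronwall/BDG analysis of $R^\eps$, which is the genuinely laborious step and which you correctly flag; the paper's approach trades that labour for an appeal to \cite{kunita1984stochastic} and the integrate-then-differentiate device, which only requires the boundedness and continuity of the candidate derivative that both proofs establish identically from Lemma \ref{lem:estimates_z_Y}. The boundedness and Lipschitz claims at the end are handled the same way in both arguments.
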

\begin{proof}
To ease the notation, we prove the statement for $K=1$ and $n=0$, the general case being completely analogous.

Since ${\bf h},\alpha(t,\cdot),\beta(t,\cdot)$ and their derivatives up to order $2$ are bounded, uniformly w.r.t.~$t\in[0,T]$, \cite[Theorem 2.3.1 (p.218)]{kunita1984stochastic} yields 
\begin{equation}
\label{eq:der_Z_as}
\partial_{a}    Z^{a}_t = Y^{a}_{t},\quad a\in\Rb,  \quad \text{a.s.},
\end{equation}
for any $t\in[0,T]$. 
Moreover, given the differentiability of $\varphi$ and the integrability of $Y^a$ from Lemma \ref{lem:estimates_z_Y}, the right-hand side of \eqref{eq:der_expect} is well defined. Therefore, the identity \eqref{eq:der_expect} can be obtained once the exchange of derivative and expectation operations is justified. Let us set
\begin{equation}\label{eq:der_exp_z}
f(t;a):= \Eb \big[ \varphi' \big(Z^{a}_{t})Y^{a}_{t} \big], \qquad t\in [0,T], \ a\in\Rb.
\end{equation}
By Lemma \ref{lem:estimates_z_Y} and the boundedness of $\varphi',\varphi''$ we have
\begin{align}\label{eq:f_bound}
|f(t;a)| &\leq C \Eb \big[|Y^{a}_{t} | \big] \leq C, \\
|f(t;a)- f(t;a')| &\leq \Eb \big[ \big| \varphi' \big(Z^{a}_{t}) - \varphi' \big(Z^{a'}_{t}) \big|\times \big|Y^{a}_{t} \big| \big] + \Eb \big[ \big| \varphi' \big(Z^{a}_{t}) \big| \times \big|Y^{a}_{t}
- Y^{a'}_{t} \big| \big] \leq C |a-a'|,
\end{align}
for any $t\in [0,T]$, $a\in\Rb$, with $C>0$ independent of $t$ and $a$. Therefore, we obtain
\begin{align}
\Eb \big[    \varphi(Z^{a}_t) \big] -  \Eb \big[    \varphi(Z^{0}_t)   \big] 
=   \Eb\big[ \varphi(Z^{a}_t) -    \varphi(Z^{0}_t)   \big]  
& =   \Eb \bigg[   \int_0^a \partial_{\hat a} \big\{\varphi(Z^{\hat a}_t) \big\} \dd \hat a  \bigg]
\\
&= \Eb \bigg[   \int_0^a  \varphi'(Z^{\hat a}_t) Y^{\hat a}_{t}  \dd \hat a  \bigg]
 =       \int_0^a f(t;\hat a)  \dd \hat a,  
\end{align}
where the second last equality follows by using \eqref{eq:der_Z_as}, and the last equality from Fubini's theorem (using \eqref{eq:f_bound}). 

Lastly, using the continuity of $f(t;\cdot)$ we conclude that 
\begin{equation}
\partial_a \Eb \big[    \varphi(Z^{a}_t) \big] = f(t;a), \qquad a\in\Rb , \ t\in[0,T],
\end{equation}
which completes the proof.
\end{proof} 

\subsection{Proof of the main auxiliary propositions}

We are now in the position to prove Proposition \ref{lem:consistency_noise} and \ref{lem:regul_grad_G}. 
\begin{proof}[Proof of Proposition \ref{lem:consistency_noise}]
To ease the notation, we prove the statement for $K=1$ and $n=0$, the general case being completely analogous. 

By \eqref{eq:der_expect} in Lemma \ref{Lemma:4.15.Diff} we have
\begin{equation}
 \partial_{a} \Big(  \big| \Eb \big[ \varphi(Z^{a}_t) -  \h\big((\Lc a) (t)\big) \big] \big|^2   \Big) = 2\, \Eb \big[ \varphi(Z^{a}_t) -  \h\big((\Lc a) (t)\big) \big] \Eb \big[ \varphi' \big(Z^{a}_{t}) Y^{a}_{t} -  \partial_a \h\big((\Lc a) (t)\big) \big].
\end{equation}
Recall the heuristics in \eqref{eq:formal_comp} and \eqref{eq:v_function}. For any independent copies $(\xi,W)$ and $(\tilde\xi,\tilde W)$, since $\h,\h',\varphi,\varphi'$ are bounded, we observe that 
\begin{align}
\label{eq:bound_der_abs}
\nonumber
\Eb\big[ \big| \varphi \big(Z^{a}_{t}(\bar\xi,\bar W)\big) - \h\big((\Lc a) (t)\big) \big|^2      \big] 
\times  
\Eb\big[ \big|  \varphi' \big(Z^{a}_{t}(\tilde\xi,\tilde W)\big) Y^{a}_{t}(\tilde\xi,\tilde W)\big)   - 
&
\partial_{a} \h\big( (\Lc a) (t) \big)  \big|^2      \big]
\\
&  
\leq C \big(  1   +    \lVert Y^{a} \rVert_{\cS^2_{[0,T]}}  \big)\leq C,
\end{align}
where the last inequality follows from \eqref{eq:estimates_Z_a}. Therefore, we can exchange the integration and derivative operation to obtain the following
\begin{equation}
 \partial_a G(a)  =  
 \Eb[        v(a;  \xi, {W} ; \tilde\xi, \tilde W)   ] + \partial_a H(a),
 \end{equation}
with $v$ as defined in \eqref{eq:v_function}, namely
\begin{equation}\label{eq:v_function_bis}
v(a;  \xi, {W} ; \tilde\xi, \tilde W)   : = 2 \int_0^T       \Big(  \varphi \big(Z^{a}_{t}(\xi, W)\big) - \h\big((\Lc a) (t)\big) \Big) \Big(  \varphi' \big(Z^{a}_{t}(\tilde\xi,\tilde W)\big) Y^{a}_{t}(\tilde\xi,\tilde W)   - \partial_{a} \h\big( (\Lc a) (t) \big)       \Big)               \dd t.
\end{equation}
Moreover, Jensen's inequality and Fubini's theorem, together with \eqref{eq:bound_der_abs}, prove that 
\begin{equation}\label{eq:estimate_v_second}
\Eb_{\bar{\Pb}}\big[ | v( a ; \xi, W;\tilde\xi, \tilde W) |^2  \big] \leq C, \qquad a\in\Rb, \quad m\in \mathbb{N}_0,
\end{equation}
with $C$ independent of $a$. 
Hence, for $m\in\mathbb{N}$ and recalling ${\bf v}_{m}$ as defined recursively in Algorithm \ref{alg:SGDMVSDE}, we obtain for any $m \in \mathbb{N}_0$ 
\begin{align}
&\hspace{-8pt}\Eb_{\bar{\Pb}}\big[ {\bf v}_{m+1} - \nabla_a G( {\bf a}_m) | \bar{\mathcal{F}}_m \big]\\
&\hspace{-8pt} = \Eb_{\bar{\Pb}}\Big[ v({\bf a}_m; \xi_{m+1}, W_{m+1};\tilde\xi_{m+1}, \tilde W_{m+1}) 
- \Eb_{\bar{\Pb}}\big[  v(a; \xi_{m+1}, W_{m+1};\tilde\xi_{m+1}, \tilde W_{m+1})  \big]_{a={\bf a}_m}\Big| \bar{\mathcal{F}}_m \Big] = 0, 
\end{align}
which is \eqref{eq:consistency_condition}. From \eqref{eq:estimate_v_second} and Assumption \ref{ass:H} on $H$ we also obtain
\begin{equation}
\Eb_{\bar{\Pb}}\big[ | {\bf v}_{m+1} - \nabla_a G( {\bf a}_m)  |^2 \big| \bar{\mathcal{F}}_m \big] \leq C, \qquad m \in \mathbb{N}_0,
\end{equation}
which concludes the proof.
\end{proof}

\begin{proof}[Proof of Proposition \ref{lem:regul_grad_G}]
To ease the notation, we prove the statement for $K=1$ and $n=0$, the general case being completely analogous.

By the same arguments as in the proof of Proposition \ref{lem:consistency_noise}, and using Assumption \ref{ass:H}, one obtains
\begin{equation}
\partial_a G(a) = 2 \int_0^T   \Eb\Big[      \varphi \big(Z^{a}_{t}\big) - \h\big((\Lc a) (t)\big) \Big] \Eb\Big[  \varphi' \big(Z^{a}_{t}\big) Y^{a}_{t}   - \partial_{a} \h\big( (\Lc a) (t) \big)       \Big]               \dd t  + \partial_a H(a).
\end{equation}
From this expression we obtain the identity 
\begin{equation}
\partial_a G(a) -  \partial_a G(a')  = 2 \int_0^T \big( I_1(t) + I_2(t) \big) \dd t + I_3, 
\end{equation}
where
\begin{align}\label{eq:decomp_grad_G}
 I_1(t) & =  \Eb\Big[      \varphi \big(Z^{a}_{t}\big) - \varphi \big(Z^{a'}_{t}\big) +  \h\big((\Lc a') (t)\big) - \h\big((\Lc a) (t)\big) \Big] \Eb\Big[  \varphi' \big(Z^{a}_{t}\big) Y^{a}_{t}   - \partial_{a} \h\big( (\Lc a) (t) \big)       \Big]  , \\
 I_2(t) & =   \Eb\Big[      \varphi \big(Z^{a'}_{t}\big) - \h\big((\Lc a') (t)\big) \Big]  \Eb\Big[  \varphi' \big(Z^{a}_{t}\big) Y^{a}_{t} - \varphi' \big(Z^{a'}_{t}\big) Y^{a'}_{t} + \partial_{a} \h\big( (\Lc a') (t) \big)   - \partial_{a} \h\big( (\Lc a) (t) \big)       \Big]  , \\
 I_3 & =    \partial_a H(a) - \partial_a H(a')   .   
\end{align}
Now, by the boundedness of $\varphi,\varphi',\varphi'',{\bf h}, {\bf h}'$ we have
\begin{align}\label{eq:estim_decomp}
| I_1 (t) | & \leq C\,  \Eb\big[      | Z^{a}_{t} - Z^{a'}_{t}| + |a - a'| \big] \Eb\big[      | Y^{a}_{t}  | + 1 \big] , \\
| I_2 (t) | & \leq C\,  \Eb\big[  |a - a'| (   | Y^{a}_{t}| + 1 ) +  | Y^{a}_{t} - Y^{a'}_{t}|  \big] . 
\end{align}
Therefore, Lemma \ref{lem:estimates_z_Y} and Assumption \ref{ass:H} yield the following result
\begin{equation}\label{eq:grad_G_lip_final}
| \partial_a G(a) -  \partial_a G(a')  | \leq C |a - a'|, \qquad a\in\Rb.
\end{equation}  
\end{proof}

\end{document}